\begin{document}
%%%%%%%%%%%%%%%%%%%%%%%% Statements (copied from Dr Li Guo's paper)

\newtheorem{theorem}{Theorem}[section]
\newtheorem{prop}[theorem]{Proposition}
\newtheorem{defn}[theorem]{Definition}
\newtheorem{lemma}[theorem]{Lemma}
\newtheorem{coro}[theorem]{Corollary}
\newtheorem{prop-def}{Proposition-Definition}[section]
\newtheorem{claim}{Claim}[section]
\newtheorem{propprop}{Proposed Proposition}[section]
\newtheorem{conjecture}{Conjecture}
\newtheorem{ex}{Example}[section]
\newcommand{\nc}{\newcommand}

%%%%%%%%%%%%%%%%%%%%%%% symbols (copied from Dr Li Guo's paper)
\nc{\ola}[1]{\stackrel{#1}{\longrightarrow}}%${\Bbb Z}$
\nc{\uap}[1]{\uparrow \rlap{$\scriptstyle{#1}$}}
\nc{\vp}{\vspace{8cm}}
\nc{\proofend}{$\blacksquare$\vspace{0.3cm}} \nc{\wbox}{\proofend}
%\nc{\wbox}{\Box}
\nc{\modg}[1]{\!<\!\!{#1}\!\!>}
\nc{\intg}[1]{F_C(#1)} \nc{\lmodg}{\!<\!\!} \nc{\rmodg}{\!\!>\!}
\nc{\cpi}{\widehat{\Pi}} \nc{\sha}{{\mbox{\cyr X}}}
\nc{\shprc}{\shpr_c} \nc{\labs}{\mid\!} \nc{\rabs}{\!\mid}

%%%%%%%%% roman fonts, in alphabetic order (copied from Dr Li Guo's paper)
\nc{\ann}{\mrm{ann}}
\nc{\Aut}{\mrm{Aut}}
\nc{\can}{\mrm{can}}
\nc{\colim}{\mrm{colim}} \nc{\Cont}{\mrm{Cont}}
\nc{\rchar}{\mrm{char}} \nc{\cok}{\mrm{coker}}
\nc{\dtf}{{R-{\rm tf}}} \nc{\dtor}{{R-{\rm tor}}}
\renewcommand{\det}{\mrm{det}}
\nc{\Div}{{\mrm Div}} \nc{\End}{\mrm{End}} \nc{\Ext}{\mrm{Ext}}
\nc{\Fil}{\mrm{Fil}} \nc{\Frob}{\mrm{Frob}} \nc{\Gal}{\mrm{Gal}}
\nc{\GL}{\mrm{GL}} \nc{\Hom}{\mrm{Hom}} \nc{\hsr}{\mrm{H}}
\nc{\hpol}{\mrm{HP}} \nc{\id}{\mrm{id}} \nc{\im}{\mrm{im}}
\nc{\incl}{\mrm{incl}} \nc{\length}{\mrm{length}}
\nc{\mchar}{\rm char} \nc{\mpart}{\mrm{part}} \nc{\ql}{{\QQ_\ell}}
\nc{\qp}{{\QQ_p}} \nc{\rank}{\mrm{rank}} \nc{\rcot}{\mrm{cot}}
\nc{\rdef}{\mrm{def}} \nc{\rdiv}{{\rm div}} \nc{\rtf}{{\rm tf}}
\nc{\rtor}{{\rm tor}} \nc{\res}{\mrm{res}} \nc{\SL}{\mrm{SL}}
\nc{\Spec}{\mrm{Spec}} \nc{\tor}{\mrm{tor}} \nc{\Tr}{\mrm{Tr}}
\nc{\tr}{\mrm{tr}}

%%%%%%%%%%%%%%%%%% bold face (copied from Dr Li Guo's paper, except for Avg)
\nc{\Avg}{\mathbf{Avg}} \nc{\Alg}{\mathbf{Alg}} \nc{\bfk}{{\bf k}}
\nc{\bfone}{{\bf 1}} \nc{\changes}{\marginpar{\bf Changes!}}
\nc{\detail}{\marginpar{\bf More detail}
    \noindent{\bf Need more detail!}
    \svp}
\nc{\Mon}{\mathbf{Mon}} \nc{\proof}{\noindent{\bf Proof: }}
\nc{\remark}{\noindent{\bf Remark: }}
\nc{\remarks}{\noindent{\bf Remarks: }} \nc{\Rep}{\mathbf{Rep}} \nc{\Rings}{\mathbf{Rings}}

%%%%%%%%%%%%%%%%%%%Bbb fonts (copied from Dr Li Guo's paper)
\nc{\BA}{{\Bbb A}} \nc{\CC}{{\Bbb C}} \nc{\DD}{{\Bbb D}}
\nc{\EE}{{\Bbb E}} \nc{\FF}{{\Bbb F}} \nc{\GG}{{\Bbb G}}
\nc{\HH}{{\Bbb H}} \nc{\LL}{{\Bbb L}} \nc{\NN}{{\Bbb N}}
\nc{\QQ}{{\Bbb Q}} \nc{\RR}{{\Bbb R}} \nc{\TT}{{\Bbb T}}
\nc{\VV}{{\Bbb V}} \nc{\ZZ}{{\Bbb Z}}

%%%%%%%%%%%%%%%%%%% cal fonts (copied from Dr Li Guo's paper)
\nc{\cala}{{\cal A}} \nc{\calc}{{\cal C}} \nc{\cald}{\mathcal{D}}
\nc{\cale}{{\cal E}} \nc{\calf}{{\cal F}} \nc{\calg}{{\cal G}}
\nc{\calh}{{\cal H}} \nc{\cali}{{\cal I}} \nc{\call}{{\cal L}}
\nc{\calm}{{\cal M}} \nc{\caln}{{\cal N}} \nc{\calo}{{\cal O}}
\nc{\calp}{{\cal P}} \nc{\calr}{{\cal R}} \nc{\calt}{{\cal T}}
\nc{\calw}{{\cal W}} \nc{\calx}{{\cal X}} \nc{\CA}{\mathcal{A}}

%%%%%%%%%%%%%%%%%%  frak fonts (copied from Dr Li Guo's paper)
\nc{\fraka}{{\frak a}} \nc{\frakB}{{\frak B}}
\nc{\frakm}{{\frak m}} \nc{\frakp}{{\frak p}}

%%%%%%%%%
%%%%%%%%  end of hurmac.tex
%%%%%%%%%

%%%%%%%%%Bill's macros (copied from Dr Li Guo's paper)
% NEW MACROS BELOW HERE
\nc{\bfn}{\NN} \nc{\guess}{\noindent{\bf{Guess: }}}
\nc{\question}{\noindent{\bf{Question: }}}
\nc{\example}{\noindent{\bf{Example: }}}
\nc{\idea}{\noindent{\bf{Idea: }}} \font\cyr=wncyr10

\nc{\delete}[1]{{}}

%%%%%%%%%%%%%%%%%%%%%%%%%%%%%%%%%%%%%%%%%%%%%%%%%%%%%%%%%%%%
\pagenumbering{roman}

\pagestyle{empty}

\title{ {\bf TITLE PAGE \\ FOR THE PH.D. DEGREE}\\
{\ }\\
\bf AN ALGEBRAIC STUDY OF AVERAGING OPERATORS }
\author{\small By Weili Cao}

\date{August 30, 2000}
\maketitle
\newpage
\pagestyle{plain}

\newpage
\hspace {1.2 in} ABSTRACT OF THE THESIS

\vspace {.4 in}

\hspace{.8 in} An Algebraic Study of Averaging Operators

\hspace{1.8 in} By Weili Cao

\hspace {1.13 in} Thesis director: Professor Li Guo

\vspace {0.4 in}

A module endomorphism $f$ on  an algebra $A$ is called an
averaging operator if it satisfies  $f(xf(y)) = f(x)f(y)$ for any
$x, y\in A$. An algebra $A$ with an averaging operator $f$ is
called an averaging algebra. Averaging operators have been studied
for over one hundred years. We study averaging operators from an
algebraic point of view. In the first part, we construct free
averaging algebras on an algebra $A$ and on a set $X$, and free
objects for some subcategories of averaging algebras. Then we
study properties of these free objects and, as an application, we
discuss some decision problems of averaging algebras. In the
second part, we show how averaging operators induce Lie algebra
structures. We discuss conditions under which a Lie bracket
operation is induced by an averaging operator. Then we discuss
properties of these induced Lie algebra structures. Finally we
apply the results from this discussion in the study
of averaging operators.

\newpage

\noindent

\tableofcontents

\newpage

\pagestyle{myheadings}

 \pagenumbering{arabic}
 \setcounter{section}{0}

\section{Introduction}

In this section, we first give the definition and some examples of
averaging algebras, followed by a brief description of the history
of the study of averaging algebras, and the motivation and
approach of our study. Then we give a summary of main results of
our study on free averaging algebras and the induced Lie algebras.

\subsection {Definition}
Let $R$ be a commutative ring with identity element $1_{R}$, and
$A$ be a commutative $R$-algebra with identity element $1_{A}$.

\begin{defn}
 An $R$-module endomorphism $f$ on  $A$ is called an {\bf
averaging operator} if it satisfies the {\bf averaging identity} :
\begin{equation}
 f(xf(y))=f(x)f(y),\ x,\ y\in A.
\label{eq:lav1}
\end{equation}
The pair $(A, f)$ is called an {\bf averaging algebra} or an {\bf
averaging $R$-algebra. }
\end{defn}
So if $f$ is an averaging operator, then an element
 $a = f(y) \in f(A)$ acts like a scalar: $f(ax) = af(x)$.

About  one hundred years ago, in a famous paper of Reynolds on
turbulence theory {\bf \cite {Rey1}}, the operator
\[ g(x,t) \rightarrow  \bar {g}(x,t) = \lim_{T \rightarrow \infty}\frac{1}{2T} \int_{-T}^{T} g(x, t+ \tau) d
\tau \] was defined for ergodic flow. It is an averaging operator.

\vspace{0.3cm} We now give two more examples of averaging
operators.

\begin{ex}
Let $a \in A$ be a fixed element, define $f_{a}(x) = ax$ for all
 $x \in A$. Then $f_{a}$ is an averaging operator. Note $f_{0_A}$
 and $f_{1_{A}}$ are the zero map and identity map, respectively.
 The set $\{f_{a}: a \in A \}$ is an $R$-algebra isomorphic to
 $A$.
\end{ex}

The following example is due to Rota {\bf \cite {Rot3}}.

\begin{ex}
 Let $A$ be the algebra over the real numbers consisting
  of real valued measurable functions on a  measure space $(S, \Sigma , m)$
  which are integrable over every set of finite measure in $\Sigma$ ,  and let
  $\Sigma '$ be a totally
  $\sigma$-finite $\sigma$-subfield of $\Sigma$. If
  $\varphi: f \rightarrow f'$ is  the linear operator in $A$ which  maps a
  function $f$ into the Radon-Nikodym derivative $f'$ of $f$ relative to
  $\Sigma '$, then $\varphi$ is an averaging operator.
\end{ex}

\subsection{History of studies on averaging operators }
Averaging operators first appeared in the work of Reynolds in 1895
in connection with the theory of turbulence {\bf \cite{Rey1}}, and
have been studied by many authors under various contexts. The
following is a list of some of the major research activities in
this area.

\begin{itemize}
\item
 Kamp\'{e} de F\'{e}riet first
recognized the importance of studying the averaging operators and
Renolds operators, and began their study in a series of papers
stretching over a period of thirty years
in the first half of the 20th century {\bf \cite{Kam}}.
\item
According to Rota{\bf \cite {Rot3}}, Birkhoff did the first study
of averaging operators using the methods of functional analysis in
the 1950's.
\item
In 1954, Moy showed the
connection between averaging operators and conditional expectation
{\bf \cite {Moy}}.
\item
In 1958, Kelley proved that a positive and idempotent linear
operator $T$ defined on the Banach algebra $C\sb \infty(X)$ of
real valued continuous functions vanishing at $\infty$ on a
locally compact Hausdorff space $X$ is an averaging operator if
and only if the range of $T$ is a subalgebra of $C\sb \infty(X)$
{\bf \cite {Kel1}}.
\item
In 1962,
Brainerd considered the conditions under which an
averaging operator can be represented as an integration on a ring
of functions {\bf \cite {Bra1}}.
\item
In 1964, Rota proved that a continuous Reynolds operator on the
algebra
 $L_{\infty}(S, \Sigma , m)$ of bounded measurable functions on a measure space
 $(S, \Sigma, m)$ is an averaging operator if and only
 if it has closed range {\bf \cite
{Rot3}}.
\item
In 1968, Gamlen and Miller discussed spectrum and
resolvent sets of averaging operators on Banach algebras
{\bf\cite {MI1}}.
\item
In 1969, Umegaki discussed averaging operators
on $B^{*}$ algebras and their applications to information channels
{\bf \cite {Ume}}.

\item
In 1976, Bong found connections between Baxter operators and
averaging operators on complex Banach algebras {\bf \cite {Bong}}.
\item
In 1986, Huijsmans generalized the work of Kelley
to the case of $f$-algebras {\bf \cite {Hui1}}.
\item
In 1993, Scheffold studied Reynolds operators and  averaging
operators on semisimple $F$-Banach lattice algebras {\bf
\cite{Sch}}.
\item
In 1998, Triki showed that a positive contractive projection
on an Archimedean $f$-algebra is an averaging operator
{\bf \cite{Tri2}}.
\end{itemize}

\subsection{Motivation and approach of this study}
We are interested in studying averaging operators because
\begin{enumerate}
\item[(i)]
they have  applications in many areas of pure and applied
mathematics, such as theory of turbulence, probability,
function analysis, and information theory{ \bf \cite
{Rey1}}{\bf \cite {Moy}}{\bf \cite {Kil}}{\bf \cite {Tri1}}{\bf
\cite {Tri2}} {\bf \cite {Ume}},
\item[(ii)] they are closely related
to Reynolds operators, symmetric operators, and Baxter operators
{\bf \cite {Rot3}}{\bf \cite {MI1}}{\bf \cite {Bong}},  and
\item[(iii)]
they naturally induce Lie algebra structures, as we will show
later in this thesis.
\end{enumerate}

As far as we know, most studies on averaging operators have been
done for various special algebras, such as function spaces, Banach
algebras, and the topics and methods have been basically analytic.
In this thesis, we study averaging operators in the general
context and from an algebraic point of view.

The first part of this thesis discusses free averaging algebras.
Free objects are important in the algebraic study of algebraic
structures. Although free objects in general can be described
using the language of universal algebra~{\bf \cite{Co}}, explicit constructions
have proved very useful. A good example is that the explicit
construction of  free modules of finite rank leads directly to the
structure theorems for finitely generated modules. Another example
is the construction of free Baxter algebras. Recently Guo and
Keigher gave an explicit construction of free Baxter algebras by
using mixed shuffle products {\bf \cite {G-K1, G-K2}},
generalizing the works of Cartier {\bf \cite{Ca}} and Rota {\bf
\cite{Rot2}}. This construction has been used to study properties
of Baxter algebras {\bf \cite{Guo1,Guo2,AGKO}}. Motivated by the
construction of free Baxter algebras by Guo and Keigher, we will
give an explicit construction of free averaging algebras on
algebras, and free averaging algebras on sets. We will also
construct free objects for some subcategories of averaging
algebras. Then we will discuss properties of these free objects,
and their applications in the study of averaging algebras in
general.

In the second part of this thesis, we will show how Lie algebras
structures can be naturally induced by averaging operators. We
consider the conditions under which a Lie algebra structure can be
induced by an averaging operator. We discuss properties of these
induced Lie algebra structures. We then use the  results obtained
to study properties of averaging operators.

\subsection {Main results on free averaging algebras }
We need some definitions before we give a summary of our main
results on free averaging algebras.

Let $(A,f)$ be an averaging $R$-algebra. An ideal $I$ of the
$R$-algebra $A$ is said to be an averaging ideal of $(A,f)$ if
$f(I) \subseteq  I$. The quotient averaging algebra is the
averaging $R$-algebra $(A/I, \bar{f})$, where $\bar{f}$ is defined
by $ \bar{f} : a + I \rightarrow f(a) + I$ , for all  $ a \in A $.

 Let $(A,f)$ and $(B,g)$ be two averaging
$R$-algebras. An $R$-algebra homomorphism (isomorphism)
 $\varphi : A \rightarrow B$ is said to be  an
 {\bf  averaging homomorphism (isomorphism) } if
  $\varphi \circ f = g \circ \varphi$.

 Let $A$ be an $R$-algebra,
  $(F, f)$ an averaging $R$-algebra, and
 $ i : A \rightarrow  F $  an $R$-algebra homomorphism. Together with
$ i : A \rightarrow  F $,  $(F, f)$ is said to be a {\bf free
averaging $R$-algebra on $A$ } if for any  averaging $R$-algebra
$(B, g)$ and $R$-algebra homomorphism
 $\varphi : A \rightarrow B $,  there exists a unique averaging homomorphism
 $ \hat {\varphi} : (F, f) \rightarrow (B,g)$ such that
   $\varphi =  \hat {\varphi} \circ i$.

Let $X$ be a  set, $(F, f)$ an averaging $R$-algebra, and
 $i: X \rightarrow F $ a map.   Together with the map
   $i: X \rightarrow F $, $(F, f)$  is said to be a {\bf free averaging
    $R$-algebra on $X$} if
for any
  averaging $R$-algebra $(B, g)$ and any map
   $\eta : X \rightarrow B$,  there exists a unique averaging
   homomorphism
   \[ \hat \varphi : (F, f) \rightarrow (B, g) \]
   such that $\hat \varphi \circ i = \eta $.

An averaging algebra $(A,f)$  is called {\bf unitary}
 if $f(1_{A}) = 1_{A}$. An averaging algebra $(A,f)$ is called a {\bf
Reynolds-averaging algebra}  if $f$ also satisfies the {\bf
Reynolds identity }
\[ f(x)f(y) + f(f(x)f(y)) = f(xf(y)) + f(yf(x)). \]
Free unitary averaging algebras and free Reynolds-averaging
algebras can be defined in the similar way.

Now  we give a brief description of the  main results on free
averaging algebras we have obtained in this thesis. Details can be
found in the indicated theorems or propositions which will be
given in later sections.

{\bf (1) Free averaging algebra on an $R$-algebra $A$  (Theorem
\ref{thm1})}

Let $F_{A} = A \otimes S(A)$ be the tensor algebra, where $S(A)$
is the symmetric algebra of $A$.  Let
  $f_{A} : F_{A} \rightarrow F_{A}$ be the map defined by
  \[ f_{A}(\sum_{i}a_{i} \otimes s_{i}) = \sum_{i}(1_{A} \otimes
a_{i}s_{i}), \]where $a_i \in A$, $s_i \in S(A)$. Then
  $(F_{A}, f_{A})$ is a free averaging algebra on $A$ .

{\bf(2) Free averaging algebra on a set $X$ (Theorem
\ref{freeOnX}) } Let $\Theta(X)$ be the set of monomials of $R[X]$
with coefficient $1_R$,  $Y$ be a set indexed by  $\Theta(X)$,
i.e.
  \[ Y = \{y_{\theta}: \theta \in \Theta(X)\}.\]
Let $F_X$ be the polynomial algebra
 $R[X \cup Y]$.  Define an $R$-linear  operator
 $f_X : F_X \rightarrow F_X$ by $f_X(uv) = y_uv$ where $u \in
 \Theta(X)$ and  $v \in R[Y]$. $f_X$ is an averaging
 operator on $F_X$, and together with the inclusion map $i_X: X \rightarrow F_X$,
  $(F_X, f_X)$ is a free averaging $R$-algebra on $X$.

{\bf (3) Free unitary averaging algebras and free
Reynolds- averaging algebras on a set $X$  (Proposition
\ref{prop1})}

If $I_0$ is the averaging ideal of $(F_X, f_X)$ generated by
$y_{1_R} - 1_R \in F_X$, then the quotient averaging algebra
 $(F_X/I_0, \bar{f_0})$ is a free unitary averaging $R$-algebra on $X$.

Let $I_1$ be the averaging ideal of $(F_X, f_X)$ generated by the
elements in the form
 $yy'y_{1_R} - yy' \in F_X$, where $y, y' \in Y$,  then the quotient averaging algebra
 $(F_X/I_1, \bar{f_1})$ is a free Reynolds-averaging $R$-algebra on $X$.

{\bf (4) Induced homomorphisms between free averaging algebras
 (Theorem \ref{thm4})}

 An $R$-algebra homomorphism $\theta: A
\rightarrow B$ induces an averaging homomorphism
 $\hat{\theta}: (F_{A}, f_{A}) \rightarrow (F_{B}, f_{B})$ which can be described nicely
 in terms of $\theta$, and
 (i) $\theta$  is  injective if and only if $\hat{\theta}$ is
 injective;  (ii) $\theta$  is surjective if and only if $\hat{\theta}$ is
 surjective; therefore (iii) $\theta$  is an isomorphism if and only if $\hat{\theta}$ is
 an averaging isomorphism.

{\bf(5) Chain conditions of averaging ideals of free averaging
algebras (Theorem \ref{thm5} and  \ref{thm6})}

(i)   $(F_{X}, f_{X})$ is a noetherian
averaging algebra  if and only if $X = \emptyset$.

(ii)  If $(F_{A}, f_{A})$ is a noetherian
averaging algebra, then $A$ is a noetherian $R$-algebra; The
converse is not true.

{\bf(6) Decision problems of averaging algebras (Theorem
\ref{thm7}) }
 Let $E_{2}$ be a finite set of $R$-algebra identities
involving a function symbol $\bf{f}$ of arity 1, and $E_{1}$ be
one of the following sets of $R$-algebra identities
\begin{eqnarray*}
&& E_{a} = \{ {\bf{f}}(v_{1}{\bf{f}}(v_{2})) =
{\bf{f}}(v_{1}){\bf{f}}(v_{2}) \}
\\ && E_{ua} = \{ {\bf{f}}(v_{1}{\bf{f}}(v_{2})) =
{\bf{f}}(v_{1}){\bf{f}}(v_{2}), \;\; {\bf{f}}({\bf{1}}) = {\bf{1}}
\}
\\ && E_{ra} =\{ {\bf{f}}(v_{1}{\bf{f}}(v_{2})) =
{\bf{f}}(v_{1}){\bf{f}}(v_{2}), \;\;
{\bf{f}}({\bf{f}}(v_{1}){\bf{f}}(v_{2})) =
{\bf{f}}(v_{1}){\bf{f}}(v_{2}). \}
\end{eqnarray*}
Then it is decidable whether $E_{1}$ implies $E_{2}$.

\subsection {Main results on induced Lie algebras }
We now list the main results on  Lie algebra structures induced by
averaging operators. First recall that a {\bf Lie bracket
operation} on an $R$-module $A$ is a bilinear binary operation
\[ [,]: A\times A \to A \]
such that
\begin{enumerate}
\item
{\bf (Anticommutativity) } $[x,x]=0$ for all $x \in A$ , and
\item
{\bf (Jacobi identity)} $[[x,y],z]+[[y,z],x]+[[z,x],y]=0 $ for all
 $x, y, z \in A$.
\end{enumerate}
The pair $(A,[,])$ is called a {\bf Lie algebra}.

 Let $f$ be an averaging operator on an
$R$-algebra $A$, we can define a binary operation on $A$:
\[ [x,y]_{f} = xf(y) - yf(x) \;\;\; x,y \in A. \]

We now summarize our main results on the  Lie algebras induced by
averaging operators.

{\bf (1) Lie algebras  Induced by averaging operators (Theorem
\ref{thm8})}

 Let $f$ be an averaging operator on  an
$R$-algebra $A$, then $[,]_{f}$ is a Lie bracket operation on $A$.
We denote the induced Lie algebra $(A, [,]_f)$ by $A_f$.

{\bf (2) Conditions under which a Lie bracket operation is
induced by an averaging operator (Theorem \ref{thm9} and
\ref{thm11})}

 (i)  A Lie bracket operation $[,]$ on  an $R$-algebra $A$
 is induced by an averaging operator  if and only
 if for all $ x, y \in A$
  \[[x,y] = x[1_{A},y] + y[x,1_{A}]\] and
  there exists some $t \in A$, such that
      \[ ([1_A,a] +at)[x,y] = [([1_A,a] +at)x,y] \]
 holds for all $a, x, y \in A$.

 (ii)  If $R$ is a field, and $A$ is of finite dimension over $R$, then
a  Lie bracket operation on $A$ is induced by some
 averaging operator if and only if a certain system of
 linear equations has a solution.

{\bf (3) Solvability and nilpotency
    of $A_{f}$ (Proposition \ref{prop3}) }

Let $f$ be an averaging operator on $A$.
\begin{enumerate}
\item[(i)]
$A_{f}$ is solvable of length 2.
\item[(ii)]
$A_{f}$ is nilpotent if and only if
\[ f(A)^{k} \subseteq \{ a
\in A : a[A,A]=0 \}\] for some $k >0$.
\end{enumerate}

{\bf (4) Nilpotent  radical of $A_{f}$  (Proposition
\ref{NrProp})}

 Let $A$ be a domain, and $f$ be an averaging operator on  $A$.
 We have

 (i) If $\ker(f) =0$, then $A_{f}$ is the nilpotent radical of
 $A_f$.

 (ii) If $\ker(f) \neq 0$, then $ker(f)$ is the nilpotent radical of $A_f$.

{\bf (5) Eigenvalues and eigenvectors of $ad_{f}(a)$ (Proposition
\ref{EigProp1}) }

 Let $f$ be an averaging operator on an $R$-algebra $A$. For each
 element $a \in A$, define a linear operator $ad_f(a)$ on $A$:
 \[ ad_f(a) : x \rightarrow [a, x], \;\; x \in A. \]
If $R$ is a field and  $A$  is a domain, then for each nonzero
element $a \in A$, $ad_f(a)$ has at most one nonzero eigenvalue.

{\bf (6) Kernel of $f$ (Proposition~\ref{prop:ker1}
    and Proposition~\ref{prop:ker2})}

(i) Let $X$ be a set. Then $\ker f_X=[F_X,F_X]_{f_X}$.

 (ii) Let
$(A,f)$ be an averaging algebra and let $\varphi: F_X\to A$ be a
surjective averaging homomorphism. Then
\begin{eqnarray*}
ker(f) = [A,A]_f &\Leftrightarrow&
\varphi^{-1}(ker(f)) = ker(f_X) + ker(\varphi)\\
&\Leftrightarrow&
ker(\varphi) \cap f_X(F_X) = f_X(ker(\varphi)).
\end{eqnarray*}

\newpage

\section{Constructions of free averaging algebras}
First we give or recall some definitions and basic properties of
averaging algebras. Then we construct a free averaging algebra on
an algebra. By combining two free structures, i.e. a free algebra
on a set and a free averaging algebra on an algebra, we construct
a free averaging algebra on a set. Finally we construct free
unitary averaging algebras and free Reynolds-averaging algebras.

\subsection{Basic properties of averaging algebras }
Recall that any ring $R$ is commutative with identity element
$\bfone_R$, and any algebra $A$ over $R$ is also commutative with
identity element $\bfone_A$ unless explicitly indicated otherwise.

\begin{defn}
Let $f$ be an averaging operator on  an $R$-algebra $A$. An
$R$-subalgebra $A_{1}$ of $A$ is called an {\bf averaging
subalgebra} of the averaging algebra $(A,f)$ if it is invariant
under $f$, i.e.
 $f(A_{1}) \subseteq A_{1}$.
\end{defn}

Also recall the following definitions we have given before.

An ideal $I$ of $A$ is called an {\bf averaging ideal} of the
averaging algebra $(A,f)$  if it is invariant under $f$, i.e.
 $f(I) \subseteq I $.

For two averaging $R$-algebras $(A,f)$ and $(B,g)$, an $R$-algebra
homomorphism (isomorphism) $\varphi : A \rightarrow B$ is called
an {\bf averaging homomorphism (isomorphism)} if
 $\varphi \circ f = g \circ \varphi$.
The category of averaging operators on  $A$
is denoted by $\Avg(A)$.

\begin{prop}
Let $A$ be an $R$-algebra,  $f \in \Avg(A)$. Then

(i) $f(A)$ is  closed under the multiplication of $A$.

(ii) $f(A) \cdot ker(f)  {\subseteq} ker(f) $.
\end{prop}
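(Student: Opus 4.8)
The plan is to read both statements directly off the averaging identity (\ref{eq:lav1}), using the commutativity of $A$ to rearrange factors; there is no genuine obstacle here, only the matter of applying the identity in the right direction.

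For part (i), I would take two arbitrary elements $f(x), f(y) \in f(A)$ and apply the averaging identity to their product, writing $f(x)f(y) = f(xf(y))$. Since $xf(y)$ is an element of $A$, the right-hand side is manifestly in the image $f(A)$. Thus the product of any two elements of $f(A)$ is again a single value of $f$, so $f(A)$ is closed under the multiplication of $A$.

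For part (ii), I would invoke the scalar property recorded just after the definition: for $a = f(y) \in f(A)$ one has $f(ax) = af(x)$ for all $x \in A$, which is precisely the averaging identity rewritten with the help of commutativity. Given $a \in f(A)$ and $k \in ker(f)$, I would then compute $f(ak) = af(k) = a\cdot 0 = 0$, so that $ak \in ker(f)$. As $a$ and $k$ were arbitrary, this yields $f(A)\cdot ker(f) \subseteq ker(f)$.

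The only point demanding any care is the systematic use of commutativity of $A$ to slide factors $f(y)$ past one another and across $f$; this is what licenses reading $f(xf(y)) = f(x)f(y)$ as the scalar rule $f(ax) = af(x)$ with $a \in f(A)$. Once that bookkeeping is granted, each part is a one-line consequence of the averaging identity.
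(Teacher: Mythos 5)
Your argument is correct and is essentially identical to the paper's proof: part (i) is the computation $f(a)f(b)=f(af(b))\in f(A)$, and part (ii) applies the averaging identity (via commutativity) to get $f(ak)=af(k)=0$ for $a\in f(A)$, $k\in \ker(f)$. No issues.
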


\begin{proof}
(i)  Let $x, y \in f(A)$, then
 $x = f(a), \; y = f(b)$ for some
  $a, b \in A$. Hence $xy = f(a)f(b) = f(af(b)) \in f(A)$.

(ii) Let  $x = f(a)$ for some $a \in A$.
     If $b \in ker(f)$, then
     $f(xb) = f(bf(a)) = f(b)f(a) = 0$.
      Hence  $f(A) \cdot ker(f)  {\subseteq} ker(f) $. \wbox

\end{proof}

\begin{prop}
If $I$ is an averaging ideal of $(A,f)$, then  $f$ induces an
averaging operator $\bar{f}$ on  $A/I$ given by
\[ \bar{f}(a + I) = f(a) + I \]
and the canonical epimorphism
\[ \pi: (A, f) \rightarrow (A/I, \bar{f}) \]
is an averaging homomorphism.
\end{prop}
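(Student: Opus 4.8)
The plan is to verify four things in turn: that $\bar f$ is well defined on cosets, that it is $R$-linear, that it satisfies the averaging identity, and that $\pi$ intertwines $f$ with $\bar f$. Every step rests on the single hypothesis $f(I)\subseteq I$ defining an averaging ideal, and I expect essentially all of the work to be concentrated in the first step.

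First I would establish well-definedness, which is the only point demanding genuine (if brief) attention. Given two representatives with $a+I=a'+I$, so that $a-a'\in I$, I would use the $R$-linearity of $f$ to write $f(a)-f(a')=f(a-a')$, and then invoke $f(I)\subseteq I$ to conclude that this difference lies in $I$; hence $f(a)+I=f(a')+I$, and the rule $\bar f(a+I)=f(a)+I$ is independent of the chosen representative. This is precisely where the averaging-ideal condition is indispensable.

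Next I would record the $R$-linearity of $\bar f$, which is immediate from that of $f$ together with the fact that $\pi$ is an $R$-module map, since $\bar f\big((a+I)+r(b+I)\big)=f(a+rb)+I$ unwinds to $\bar f(a+I)+r\,\bar f(b+I)$. To check the averaging identity, I would compute $\bar f\big((a+I)\,\bar f(b+I)\big)$ by substituting $\bar f(b+I)=f(b)+I$ and applying the averaging identity $f(af(b))=f(a)f(b)$ for $f$; both this expression and $\bar f(a+I)\,\bar f(b+I)$ collapse to $f(a)f(b)+I$, so $\bar f$ is indeed an averaging operator on $A/I$.

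Finally, the homomorphism claim will be automatic: $\pi$ is already a surjective $R$-algebra homomorphism by the standard theory of quotient algebras, and both $\pi\circ f$ and $\bar f\circ\pi$ send $a$ to $f(a)+I$, so $\pi\circ f=\bar f\circ\pi$ and $\pi$ is an averaging homomorphism. There is no substantive obstacle anywhere; the content of the proposition lies entirely in the well-definedness check, where the invariance $f(I)\subseteq I$ does all the real work.
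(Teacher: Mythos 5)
Your proposal is correct and follows essentially the same route as the paper: well-definedness via $f(a)-f(a')=f(a-a')\in I$ using $f(I)\subseteq I$, then the averaging identity by reducing both sides to $f(a)f(b)+I$, then the intertwining computation for $\pi$. The only addition is your explicit check of $R$-linearity of $\bar f$, which the paper leaves implicit.
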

\begin{proof}
If $a + I = a' + I$, then
 $a - a' \in I$. Since $I$ is invariant under $f$,
  $f(a)-f(a') = f(a - a') \in I$, hence
  $f(a) + I = f(a') + I$, and $\bar{f}$ is well-defined.

For all $a, b \in A$, we have
\begin{eqnarray*}
&& \bar{f}((a + I)\bar{f}(a'+I)) = \bar{f}((a + I)(f(a') + I))
\\&& = \bar{f}(af(a') + I) = f(af(a')) + I
\\&& = f(a)f(a') + I = \bar{f}(a + I) \bar{f}(a' + I ).
\end{eqnarray*}
So $\bar{f}$ is an averaging operator on   $A/I$. Also
\begin{eqnarray*}
&& \pi \circ f (a) = \pi (f(a)) = f(a) + I
\\ && = \bar{f}(a + I) = \bar{f} \circ \pi (a),
\end{eqnarray*}
hence $\pi$ is an averaging homomorphism. \wbox

\end{proof}

Averaging operators are not closed under composition and addition
of functions.
\begin{ex}
Let $A$ be the set of complex numbers. Then $A$ is a two-dimensional
algebra over the real numbers. Define two linear operators $f$ and
$g$ on $A$ by
\[f(z) = Im(z) = b, \]
\[ g(z) = zi ,  \]
for all $z = a + bi \in A$.  Both $f$ and $g$ are averaging
operators.  If we denote $f+g$ and $g \circ f$ by $h_1$ and $h_2$
respectively.  We have
 $h_1(1h_1(1)) = 0$, $h_1(i)h_1(i) = -1$, $h_2(ih_2(i)) = 0$,
 $h_2(i)h_2(i) = -1$. Hence neither $f + g$ nor $g \circ f$ are averaging
operators.
 \label{ex_cpl}
\end{ex}

However, we  have the following theorem.
\begin{theorem}
Let $f$ and $g$ be averaging operators on  an $R$-algebra $A$.

(i) For any $r \in R$, $rf$ is an averaging operator on  $A$.

(ii) If  $f$ is bijective, then $f^{-1}$ is an averaging operator.

(iii) If $f \circ g$ = $g \circ f$ ,
  then $f \circ g$ is an averaging operator.

(iv) If $f(xg(y)) + g(xf(y)) = f(x)g(y) + g(x)f(y)$  holds
    for all $x, y \in A$, then $f + g$ is an averaging operator.
\label{avgOpr}
\end{theorem}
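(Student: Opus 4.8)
The plan is to verify the averaging identity $(\cdot)(x(\cdot)(y)) = (\cdot)(x)(\cdot)(y)$ for each of the four operators by direct substitution, exploiting the averaging identities for $f$ and $g$ together with the commutativity of $A$. For (i), writing $(rf)(y) = rf(y)$ and pulling the scalar $r$ through $f$, the left side $(rf)(x(rf)(y))$ collapses to $r^2 f(xf(y))$, which equals $r^2 f(x)f(y) = (rf)(x)(rf)(y)$ by the averaging identity for $f$; this is immediate.

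For (iii), I would set $h = f\circ g = g\circ f$ and compute $h(xh(y))$ in two stages. First, writing $h(y)=g(f(y))$ and applying the averaging identity for $g$ to the pair $(x,f(y))$ gives $g(xh(y)) = g(x)g(f(y)) = g(x)h(y)$. Then applying $f$ and writing $h(y)=f(g(y))$, the averaging identity for $f$ applied to $(g(x),g(y))$ yields $f(g(x)h(y)) = f(g(x))f(g(y)) = h(x)h(y)$. The point is that commutativity lets me use the two factorizations $h=g\circ f$ and $h=f\circ g$ at the two stages. For (iv), I would expand $(f+g)(x(f+g)(y))$ into the four terms $f(xf(y)) + f(xg(y)) + g(xf(y)) + g(xg(y))$, use the averaging identities to replace $f(xf(y))$ and $g(xg(y))$ by $f(x)f(y)$ and $g(x)g(y)$, and then observe that the discrepancy between this and the expanded product $(f(x)+g(x))(f(y)+g(y))$ is exactly $[f(xg(y)) + g(xf(y))] - [f(x)g(y) + g(x)f(y)]$, which vanishes by hypothesis.

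The one step needing an idea rather than pure expansion is (ii), which I expect to be the main (if modest) obstacle. Here I would first upgrade the averaging identity: since $f$ is bijective, every element of $A$ has the form $f(c')$, so substituting $y \mapsto f^{-1}(c)$ into $f(xf(y)) = f(x)f(y)$ turns it into the identity $f(xc) = f(x)c$ for all $x,c \in A$. Given this, verifying the averaging identity for $f^{-1}$ reduces to applying the injective map $f$ to the desired equation $f^{-1}(xf^{-1}(y)) = f^{-1}(x)f^{-1}(y)$: the right side maps to $f(f^{-1}(x)f^{-1}(y)) = f(f^{-1}(x))f^{-1}(y) = xf^{-1}(y)$ by the upgraded identity with $x \mapsto f^{-1}(x)$ and $c \mapsto f^{-1}(y)$, which is precisely the image of the left side, so injectivity forces equality. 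It is also worth noting that Example \ref{ex_cpl} shows the commutativity hypothesis in (iii) and the compatibility hypothesis in (iv) cannot simply be dropped.
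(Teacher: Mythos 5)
Your proposal is correct and follows essentially the same route as the paper in all four parts: direct expansion for (i) and (iv), the two-stage use of the commuting factorizations for (iii), and for (ii) applying the injective map $f$ to both sides of the desired identity and using surjectivity to rewrite $f^{-1}(y)$ as a value of $f$. The only cosmetic difference is that in (ii) you isolate the intermediate identity $f(xc)=f(x)c$ explicitly, whereas the paper chooses a preimage $b$ with $f(b)=f^{-1}(y)$ and applies the averaging identity directly; these are the same argument.
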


\begin{proof}
(i) Let $h = rf$, we know that $h$ is a $R$-module endomorphism on
$A$. For all
 $x, y \in A$, we have
 \[h(xh(y)) = r(f(xrf(y))) = r^{2}f(x)f(y) = h(x)h(y). \]

 (ii) Clearly $f^{-1}$ is a bijective $R$-module endomorphism on  $A$. Let $x, y \in A$.
Since $f$ is surjective, there exists $ b \in A$ such that
 $f(b) = f^{-1}(y)$. We have
 \begin{eqnarray*}
 && f(f^{-1}(xf^{-1}(y))) = xf^{-1}(y)
 \\&& = f(f^{-1}(x))f^{-1}(y) = f(f^{-1}(x))f(b)
 \\&& = f(f^{-1}(x)f(b)) = f(f^{-1}(x)f^{-1}(y)).
 \end{eqnarray*}
 Since $f$ is injective, we have
  $f^{-1}(xf^{-1}(y)) = f^{-1}(x)f^{-1}(y)$.

  (iii) Let $h =f \circ g$ and   $x, y \in A$.
   \begin{eqnarray*}
&& h(xh(y)) = f \circ g (xf(g(y))) = f \circ g (xg(f(y)))
\\&& = f ( g (xg(f(y)))) = f ( g (x)g(f(y))) = f ( g (x)f(g(y)))
\\&& =f ( g (x))f(g(y)) = h(x) h(y).
\end{eqnarray*}

(iv)  Let $h =f + g$ and   $x, y \in A$.
  \begin{eqnarray*}
&& h(xh(y)) = (f + g) (x(f+g)(y)) = (f + g) (xf(y) + xg(y))
\\&& = f (xf(y)) + f(xg(y)) + g(xf(y)) + g(xg(y))
\\&& = f (x)f(y) + f(x)g(y) + g(x)f(y) + g(x)g(y)
\\&& = (f + g)(x)(f + g)(y) = h(x)h(y).
\end{eqnarray*}
We have proved the theorem. \wbox
\end{proof}

\begin{coro}
If $f$ is an averaging operator on  an $R$-algebra $A$, then for
any polynomial  $P(t) \in R[t]$ which does not have a constant
term, $P(f)$ is an averaging operator on  $A$.
\end{coro}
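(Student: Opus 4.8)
The plan is to reduce the corollary to Theorem \ref{avgOpr} together with a single computational lemma about the powers of $f$. Write $P(t) = r_1 t + r_2 t^2 + \cdots + r_n t^n$ with $r_i \in R$; since $P$ has no constant term the sum starts at $i=1$, and therefore $P(f) = \sum_{i=1}^{n} r_i f^i$ is visibly an $R$-module endomorphism of $A$. Thus the only thing left to verify is the averaging identity for $P(f)$.

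The crux is the identity
\[ f^i\big(x\, f^j(y)\big) = f^i(x)\, f^j(y) \qquad (i,\, j \geq 1;\ x,\, y \in A). \]
I would prove this by induction on $i$, using the observation recorded immediately after the definition: any element of $f(A)$ acts as a scalar for $f$, that is, $f(az) = a\, f(z)$ whenever $a \in f(A)$. Since $f^j(y) \in f(A)$ for every $j \geq 1$, the base case $i=1$ is exactly $f(x\, f^j(y)) = f^j(y)\, f(x)$, and the inductive step applies $f$ once more and again pulls the scalar $f^j(y)$ out. I expect this lemma to be the real content of the argument: not because it is computationally hard, but because it is precisely what repairs the failure of averaging operators to be closed under addition exhibited in Example \ref{ex_cpl}. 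Without it, the cross terms in the expansion below would not collapse.

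Granting the lemma, the proof finishes by a direct expansion. Setting $g = P(f)$ and computing for $x, y \in A$,
\[ g\big(x\, g(y)\big) = \sum_{i=1}^{n}\sum_{j=1}^{n} r_i r_j\, f^i\big(x\, f^j(y)\big) = \sum_{i,j} r_i r_j\, f^i(x)\, f^j(y) = \Big(\sum_{i} r_i f^i(x)\Big)\Big(\sum_{j} r_j f^j(y)\Big) = g(x)\, g(y), \]
which is the averaging identity for $g = P(f)$, as desired.

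Alternatively, I could assemble $P(f)$ one monomial at a time and invoke Theorem \ref{avgOpr} directly: each $f^i$ is averaging because $f$ commutes with itself, so Theorem \ref{avgOpr}(iii) gives it by induction on $i$, and then $r_i f^i$ is averaging by Theorem \ref{avgOpr}(i); the lemma shows that any two such terms satisfy the hypothesis of Theorem \ref{avgOpr}(iv), whence their running sums remain averaging by induction on the number of terms. Either route rests on exactly the same lemma, so I would present the self-contained expansion above and relegate this variant to a remark.
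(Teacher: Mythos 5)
Your proof is correct, and the computational heart of it --- the identity $f^i(xf^j(y)) = f^i(x)f^j(y)$ for $i,j\geq 1$ --- is exactly the step the paper relies on. The difference is in packaging. The paper never isolates that identity as a lemma: it first shows each $f^n$ is averaging by induction via Theorem~\ref{avgOpr}(iii), then inducts on the degree of $P$, writing $P_{k+1}(f)=P_k(f)+r_{k+1}f^{k+1}$ and verifying the compatibility hypothesis of Theorem~\ref{avgOpr}(iv) by a direct computation of $h_1(xh_2(y))$ and $h_2(xh_1(y))$; the crucial equality $f^m(x\,r_{k+1}f^{k+1}(y))=f^m(x)\,r_{k+1}f^{k+1}(y)$ appears there without separate justification (it follows since $f^m$ is averaging and $f^{k+1}(y)\in f^m(A)$ for $m\leq k+1$, or by your induction on $i$). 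Your primary route --- prove the lemma once, then expand $g(xg(y))$ as a double sum and collapse all $n^2$ cross terms simultaneously --- bypasses both the induction on the number of monomials and the appeal to Theorem~\ref{avgOpr}(iv), and is arguably cleaner and more self-contained; your ``alternative'' route is essentially verbatim the paper's proof. What the paper's version buys is a demonstration that the general closure criteria of Theorem~\ref{avgOpr} suffice; what yours buys is an explicit statement of the one identity that makes the sum of the $r_if^i$ work when sums of arbitrary averaging operators fail (Example~\ref{ex_cpl}), which is a genuine expository improvement.
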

\begin{proof}
  For any $k \geq 1$,
   $f^{k+1} = f^{k} \circ f = f \circ f^{k}$. Hence that $f^{k}$
   is an averaging operator implies that $f^{k+1}$ is also an
   averaging operator. By induction we know that $f^{n}$ is an
   averaging operator for any $n \geq 1$.

   Now let
    $P(t)=P_n(t) = r_nt^{n} + r_{n-1}t^{n-1} + ... + r_1t \in R[t]$
    be a polynomial of degree $n$.
     We will show by induction that
       $P_n(f)=  r_nf^{n} + r_{n-1}f^{n-1} + ... + r_1f$ is an averaging operator.
       For $n = 1$, $P_1(f) = r_1f$ is clearly an averaging
       operator.  Assume  it has been established that for
       polynomial $P_k(t) = r_{k}t^{k} + ... + r_1t$  of degree  $k \geq
       1$,
       $P_k(f)$ is an averaging operator.  Denote $P_k(f)$ and $r_{k+1}f^{k+1}$ by
       $h_1$ and $h_2$ respectively. Then  for all
         $x, y \in A$, we have
\begin{eqnarray*}
&& h_1(xh_2(y)) = h_1(xr_{k+1}f^{k+1}(y))
 \\ && = \Sigma_1^{k}r_mf^{m}(xr_{k+1}f^{k+1}(y))
 \\&& = \Sigma_1^{k}(r_mf^{m}(x)r_{k+1}f^{k+1}(y))
 \\&& = (\Sigma_1^{k}r_mf^{m}(x))r_{k+1}f^{k+1}(y)
 \\&& =  h_1(x)h_2(y),
\end{eqnarray*}
and
\begin{eqnarray*}
&& h_2(xh_1(y)) = h_2(x \Sigma_1^{k}r_mf^{m}(y))
\\ && =  \Sigma_1^{k}h_2(xr_mf^{m}(y))
\\ && = \Sigma_1^{k}r_{k+1}f^{k+1}(xr_mf^{m}(y))
\\ && = \Sigma_1^{k}r_{k+1}f^{k+1}(x)r_mf^{m}(y)
\\ && = r_{k+1}f^{k+1}(x)\Sigma_1^{k}r_mf^{m}(y)
\\ && = h_2(x)h_1(y)
\end{eqnarray*}
       According to (iv) of  Theorem {\ref{avgOpr}},
       $P_{k+1}(f) = h_1 + h_2 $ is an averaging operator. \wbox
\end{proof}

Note that it is necessary to require that the polynomial  $P(t)$
does not have a constant term. For example, if $f$ is the
averaging operator in Example {\ref{ex_cpl}}, take  $P(t)= t + 1$.
Then $P(f) = f + id_A$ is not an averaging operator (To verify
this, take $x = y = i$).

\subsection{Free averaging algebras on an algebra}

Let $A$ be an $R$-algebra, $(F, f)$ an averaging $R$-algebra, and
 $ i : A \rightarrow  F $  an $R$-algebra homomorphism. Recall that
  $(F, f)$ is said to be a
free averaging $R$-algebra on $A$ if for any  averaging
$R$-algebra $(B, g)$ and $R$-algebra homomorphism
 $\varphi : A \rightarrow B $,  there exists a unique averaging homomorphism
 $ \hat {\varphi} : (F, f) \rightarrow (B,g)$ such that
   $\varphi =  \hat {\varphi} \circ i$.
The existence and uniqueness (up to isomorphism) follow from
general principles   of universal algebra.

Let $T(A)$ be the tensor algebra
\begin{eqnarray*}
&& \displaystyle T(A) = \bigoplus_{n \geq 0} T^{n}(A)
\\ && = T^{0}(A) \oplus T^{1}(A) \oplus ... \oplus
T^{n}(A) \oplus ...
\end{eqnarray*}
where
\begin{eqnarray*}
 && T^{0}(A) = R, \;\; T^{1}(A) = A,
\\ && T^{n}(A) = {\underbrace{A \otimes A \otimes ...\otimes A}_{n \;
factors}}\;\;\; for \; n \geq 2.
\end{eqnarray*}
$T(A)$ is an $R$-algebra with identity element $1_{T(A)} = 1_{R}$
and the multiplication is concatenation. Let $S(A)$ be the
symmetric algebra
\begin{eqnarray*}
&& S(A) = T(A)/H =  \bigoplus_{n \geq 0} S^{n}(A)
\\&& =S^{0}(A) \oplus S^{1}(A) \oplus ... \oplus
S^{n}(A) \oplus ...
\end{eqnarray*}
where $H$ is the ideal of $T(A)$ generated by
\[ \{ a_{1} \otimes a_{2} - a_{2} \otimes a_{1}: a_{1}, a_{2} \in A \} \]
and
\begin{eqnarray*}
&& S^{0}(A) = R, \;\; S^{1}(A) = A,
\\ && S^{n}(A) =  T^{n}(A)/(H \cap T^{n}(A)) \;\; \;
{\rm\ for\ } \; n \geq
2.
\end{eqnarray*}
$S(A)$ is a commutative $R$-algebra with identity element
$1_{S(A)} = 1_{R}$. We use
  $a_{1} \odot a_{2} \odot ... \odot a_{n}$
to denote the element of $S(A)$
  \[  a_{1} \otimes a_{2} \otimes ... \otimes a_{n} + H ,\]
hence an element of $S^{n}(A)$ can be written as
 \[ \displaystyle \sum_{i = 1}^{k}a_{i_{1}} \odot a_{i_{2}} \odot ... \odot
 a_{i_{n}} \]
for some $k > 0$.

Let $F_{A} = A \otimes S(A)$. Define a multiplication of $F_{A}$
by
\[\displaystyle (\sum_{i}a_{i} \otimes s_{i})(\sum_{j}a_{j}' \otimes
s_{j}')= \sum_{i,j}(a_{i}a_{j}') \otimes (s_{i}s_{j}'), \] where
$a_{i}, a_{j}' \in A$, and $s_{i}, s_{j}' \in S(A)$. $F_A$ becomes
an $R$-algebra.  Furthermore, we  define
\[ f_{A}: F_{A} \rightarrow F_{A}, \]
\[ f_{A}(\sum_{i}a_{i} \otimes s_{i}) = \sum_{i}(1_{A} \otimes
a_{i}s_{i}).\]
$f_{A}$ is well-defined since it is the linear
extension of the $R$-bilinear map
\[          \mu : A \times S(A) \rightarrow F_{A}, \]
\[           \mu (a, s) = 1_{A} \otimes (as). \]
$f_{A}$ is an averaging operator on  $F_{A}$,  since for
  $x, y \in F_{A},$
\[ x = \sum_{i}a_{i} \otimes s_{i}, \; y = \sum_{j}a_{j}' \otimes
s_{j}'\] we have
\begin{eqnarray*}
&& f_{A}(xf_{A}(y)) = f_{A}(\sum_{i,j}a_{i} \otimes
s_{i}a_{j}'s_{j}')
\\ && = \sum_{i,j}(1_{A} \otimes a_{i}
s_{i}a_{j}'s_{j}')
\\&& = (\sum_{i}1_{A} \otimes a_{i}
s_{i})( \sum_{j} 1_{A} \otimes a_{j}'s_{j}')
\\&& = f_{A}(x)f_{A}(y).
\end{eqnarray*}
Define $i_{A}: A \rightarrow F_{A}$ by
  $i_{A}(a) = a \otimes 1_{R}$ for
 $a \in A$. Then $i_A$ is an $R$-algebra homomorphism.

\begin{theorem}
Together with
 $i_{A}: A \rightarrow F_{A}$, $(F_{A}, f_{A})$ is a free averaging
 $R$-algebra on $A$.  In other words,
 for any  averaging $R$-algebra $(B, g)$ and $R$-algebra homomorphism
 $\varphi : A \rightarrow B $,  there exists a unique averaging homomorphism
 $ \hat {\varphi} : (F_{A}, f_{A}) \rightarrow (B,g)$ such that
   $\varphi =  \hat {\varphi} \circ i_{A}$.
  \label{thm1}
\end{theorem}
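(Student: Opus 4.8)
The plan is to prove the universal property in the standard two-step fashion: first construct the averaging homomorphism $\hat{\varphi}$ explicitly, using the universal properties of the symmetric algebra and of the tensor product, and then establish its uniqueness by showing that $F_A$ is generated as an averaging algebra by $i_A(A)$.

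For existence, I would build the map one tensor factor at a time. Since $B$ is commutative and $g \circ \varphi : A \to B$ is $R$-linear, the universal property of the symmetric algebra $S(A)$ of the $R$-module $A$ yields a unique $R$-algebra homomorphism $\Psi : S(A) \to B$ with $\Psi(b_1 \odot \cdots \odot b_n) = g(\varphi(b_1)) \cdots g(\varphi(b_n))$. The assignment $(a, s) \mapsto \varphi(a)\Psi(s)$ is then $R$-bilinear from $A \times S(A)$ to $B$, so it factors through a well-defined $R$-linear map $\hat{\varphi} : F_A = A \otimes S(A) \to B$ with $\hat{\varphi}(a \otimes s) = \varphi(a)\Psi(s)$. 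That $\hat{\varphi}$ respects multiplication follows from the definition of the product on $F_A$ together with the commutativity of $B$: on $(a \otimes s)(a' \otimes s') = aa' \otimes ss'$ both sides reduce to $\varphi(a)\varphi(a')\Psi(s)\Psi(s')$. Finally $\hat{\varphi}(1_A \otimes 1_R) = 1_B$ and $\hat{\varphi} \circ i_A = \varphi$ follow at once from $\Psi(1_R) = 1_B$.

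The one genuinely substantive point, and the step I expect to be the main obstacle, is verifying that $\hat{\varphi}$ is an averaging homomorphism, i.e. $\hat{\varphi} \circ f_A = g \circ \hat{\varphi}$. Writing $x = \sum_i a_i \otimes s_i$, the left-hand side unwinds to $\hat{\varphi}(f_A(x)) = \sum_i \Psi(a_i s_i) = \sum_i g(\varphi(a_i))\,\Psi(s_i)$, since $\Psi$ is multiplicative and restricts to $g \circ \varphi$ on $A = S^1(A)$. For the right-hand side I would invoke the observation recorded just after the definition of an averaging operator, that elements of $g(B)$ act as scalars. Each $\Psi(s_i)$ is a product of elements of the form $g(\varphi(b))$, hence lies in $g(B)$ by part (i) of the first Proposition; writing $\Psi(s_i) = g(c_i)$ and applying the averaging identity gives $g(\varphi(a_i)\Psi(s_i)) = g(\varphi(a_i) g(c_i)) = g(\varphi(a_i)) g(c_i) = g(\varphi(a_i))\Psi(s_i)$. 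Summing over $i$ produces $g(\hat{\varphi}(x)) = \sum_i g(\varphi(a_i))\Psi(s_i)$, matching the left-hand side.

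For uniqueness, I would note that $f_A(a \otimes 1_R) = 1_A \otimes a$, so every generator $1_A \otimes a$ of the second factor equals $f_A(i_A(a))$, while $a \otimes s = i_A(a)\cdot(1_A \otimes s)$; hence $F_A$ is generated as an $R$-algebra by $i_A(A) \cup f_A(i_A(A))$, that is, as an averaging algebra by $i_A(A)$. Any averaging homomorphism $\hat{\varphi}$ with $\hat{\varphi} \circ i_A = \varphi$ must then satisfy $\hat{\varphi}(i_A(a)) = \varphi(a)$ and $\hat{\varphi}(f_A(i_A(a))) = g(\hat{\varphi}(i_A(a))) = g(\varphi(a))$, and being an $R$-algebra homomorphism it is thereby forced on all products, hence on all of $F_A$. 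This pins down $\hat{\varphi}$ uniquely and completes the argument.
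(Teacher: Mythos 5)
Your proposal is correct and follows essentially the same route as the paper: you construct the same map $\hat{\varphi}(a\otimes s)=\varphi(a)\,g(\varphi(a_1))\cdots g(\varphi(a_n))$, verify the averaging property by the same key observation that $\Psi(s)$ lies in $g(B)$ so the averaging identity pulls $g$ across the product, and prove uniqueness by the same remark that $1_A\otimes a = f_A(i_A(a))$ generates the second tensor factor. Your packaging via the universal properties of $S(A)$ and of the tensor product actually settles well-definedness more cleanly than the paper's ``extend by $R$-linearity''; the only caveat is that for $s_i\in S^0(A)=R$ the element $\Psi(s_i)=r1_B$ need not lie in $g(B)$, but that component of the averaging identity follows trivially from the $R$-linearity of $g$.
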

\begin{proof}
We define the map
 $ \hat {\varphi} : (F_{A}, f_{A}) \rightarrow (B,g)$  by
\[
 \hat {\varphi}( a\otimes s)=
\!\!\! \left \{ \begin{array}{cl}
 s \varphi(a),  & s \in S^{0}(A) = R, \\
\!\!\!\varphi(a)g( \varphi (a_{1}))...g( \varphi (a_{n})), &
\!\!\! \! s =
a_{1}\odot ... \odot a_{n}  \in S^{n}(A), n > 0. \end{array}
\right .
\]
We then extend $ \hat {\varphi}$ by $R$-linearity.

For each $a \in A$,
\[\hat {\varphi} \circ i_{A}(a) = \hat
 {\varphi}(a \otimes 1_{R})= 1_{R}\varphi(a) = \varphi(a). \]
Therefore $\hat {\varphi} \circ i_{A} = \varphi $.
To see ${\hat {\varphi}}$ is an averaging homomorphism, without
loss of generality, we take two elements in $F_{A}$:
 \begin{eqnarray*}
 && x = a \otimes (a_{1} \odot a_{2}\odot ...\odot a_{m}) \;\;
 \\ && y = a' \otimes (a_{1}' \odot a_{2}'\odot...\odot a_{n}')
 \end{eqnarray*}
 where $a, a', a_{i}, a_{j}'$ are elements of $A$, and we have
\begin{eqnarray*}
&& \hat{\varphi}(xy)
\\ && = \hat{\varphi}(aa' \otimes (a_{1} \odot ...\odot
a_{m}\odot a_{1}' \odot...\odot a_{n}'))
\\ && = \varphi(aa')g(\varphi(a_{1}))...g(\varphi(a_{m}))g(\varphi(a_{1}'))...g(\varphi(a_{n}'))
\\ && =( \varphi(a)g(\varphi(a_{1}))...g(\varphi(a_{m}))( \varphi(a')g(\varphi(a_{1}'))...g(\varphi(a_{n}'))
\\ && = \hat{\varphi}(x)\hat{\varphi}(y).
\end{eqnarray*}
We also have
\begin{eqnarray*}
&& (\hat {\varphi} \circ f_{A})(x)
\\ && = (\hat {\varphi} \circ f_{A})(a \otimes (a_{1} \odot ...\odot
a_{m}))
\\ && = \hat {\varphi}(1_{A} \otimes (a \odot a_{1}\odot...\odot
a_{m}))
\\ && = \varphi (1_{A})g(\varphi (a))g(\varphi (a_{1}))...g(\varphi (a_{m}))
\\ && = g(\varphi (a))g(\varphi (a_{1}))...g(\varphi (a_{m}))
\\ && = g(\varphi (a)g(\varphi (a_{1}))...g(\varphi (a_{m})))
\\ && = g(\hat {\varphi}(x)),
\end{eqnarray*}
therefore $\hat {\varphi} \circ f_{A} = g \circ \hat {\varphi}$,
and $\hat {\varphi}$ is an averaging homomorphism.

Now if $\psi :(F_{A}, f_{A}) \rightarrow (B,g)$ is another
averaging homomorphism satisfying $\psi \circ i_{A} = \varphi $,
then for $ a \in A$ and $r \in R$
 \begin{eqnarray*}
 && \psi (a \otimes r) = \psi(r(a \otimes 1_{R})) = r\psi(a \otimes 1_{R})
 \\ && = r(\psi \circ i_{A}(a)) = r \varphi(a) = \hat{\varphi} (a \otimes
 r),
\end{eqnarray*}
 and
\begin{eqnarray*}
 && \psi (a \otimes (a_{1} \odot  ... \odot a_{m}))
 \\ && = \psi((a \otimes 1_{R})(1_{A} \otimes a_{1})...(1_{A}
 \otimes a_{m}))
 \\ && = \psi(a \otimes 1_{R})\psi(1_{A} \otimes a_{1})...\psi(1_{A} \otimes a_{m})
 \\ && = \psi(a \otimes 1_{R})\psi(f_{A}(a_{1} \otimes 1_{R}))...\psi(f_{A}(a_{m} \otimes 1_{R}))
\\ && =  \psi(a \otimes 1_{R})g(\psi(a_{1} \otimes 1_{R}))...g(\psi(a_{m} \otimes 1_{R}))
 \\ && =
 \psi(i_{A}(a))g(\psi(i_{A}(a_{1})))...g(\psi(i_{A}(a_{m})))
 \\ && = \varphi(a)g(\varphi(a_{1}))...g(\varphi(a_{m}))
 \\ && = \hat {\varphi} (a \otimes (a_{1} \odot ... \odot a_{m}))
\end{eqnarray*}
Therefore $ \psi = \hat {\varphi}$. \wbox
\end{proof}

\subsection{Free averaging algebras on a  set}
Let $X$ be a  set, $(F, f)$ an averaging $R$-algebra, and
 $i: X \rightarrow F $ a map. Recall that  $(F, f)$ together with
 the map $i: X \rightarrow F $
  is said to be a free averaging $R$-algebra on $X$ if for any
  averaging $R$-algebra $(B, g)$ and any map
   $\eta : X \rightarrow B$,  there exists a unique averaging
   homomorphism
   \[ \hat \varphi : (F, f) \rightarrow (B, g) \]
   such that $\hat \varphi \circ i = \eta $.
   The existence and uniqueness (up to isomorphism)  of such free
   objects follow from the general principles   of universal algebra.

Let $j: X \rightarrow R[X]$ be the inclusion map from $X$ to the
polynomial algebra $R[X]$, $(F,f) = (F_{R[X]}, f_{R[X]})$, the
free averaging $R$-algebra on $R[X]$, as we constructed in last
subsection, and $i = i_{R[X]} \circ j : X \rightarrow  F$. We have
\begin{theorem}
Together with the map  $i: X \rightarrow  F$, the averaging
algebra
  $(F, f) = (F_{R[X]}, f_{R[X]})$ is a free averaging $R$-algebra
  on $X$.
  \label{thm2}
\end{theorem}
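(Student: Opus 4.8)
The plan is to realize the free averaging $R$-algebra on the set $X$ as a composite of two free constructions: first pass from the set $X$ to the free commutative $R$-algebra $R[X]$, and then apply the free-averaging-algebra-on-an-algebra construction of Theorem \ref{thm1} to the algebra $R[X]$. The universal property to be verified is that for any averaging $R$-algebra $(B,g)$ and any map $\eta : X \rightarrow B$ there is a unique averaging homomorphism $\hat{\varphi}: (F,f) \rightarrow (B,g)$ with $\hat{\varphi} \circ i = \eta$, where $i = i_{R[X]} \circ j$.

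For existence I would begin with $\eta : X \rightarrow B$ and invoke the universal property of the polynomial algebra $R[X]$ as the free commutative $R$-algebra on $X$: the set map $\eta$ extends uniquely to an $R$-algebra homomorphism $\bar{\eta}: R[X] \rightarrow B$ satisfying $\bar{\eta} \circ j = \eta$. Now $\bar{\eta}$ is an $R$-algebra homomorphism out of the algebra $R[X]$, so Theorem \ref{thm1} applies and produces a unique averaging homomorphism $\hat{\varphi}: (F_{R[X]}, f_{R[X]}) \rightarrow (B,g)$ with $\hat{\varphi} \circ i_{R[X]} = \bar{\eta}$. Taking $(F,f) = (F_{R[X]}, f_{R[X]})$ and composing,
\[ \hat{\varphi} \circ i = \hat{\varphi} \circ i_{R[X]} \circ j = \bar{\eta} \circ j = \eta, \]
which yields the required factorization.

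For uniqueness, suppose $\psi: (F,f) \rightarrow (B,g)$ is any averaging homomorphism with $\psi \circ i = \eta$. Since an averaging homomorphism is in particular an $R$-algebra homomorphism, and $i_{R[X]}$ is one as well, the composite $\psi \circ i_{R[X]}: R[X] \rightarrow B$ is an $R$-algebra homomorphism, and it satisfies $(\psi \circ i_{R[X]}) \circ j = \psi \circ i = \eta$. By the uniqueness clause in the universal property of $R[X]$ we conclude $\psi \circ i_{R[X]} = \bar{\eta}$. Hence $\psi$ is an averaging homomorphism out of $(F_{R[X]}, f_{R[X]})$ extending $\bar{\eta}$ along $i_{R[X]}$, so the uniqueness clause of Theorem \ref{thm1} forces $\psi = \hat{\varphi}$.

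I do not expect a genuine obstacle here, as the content is entirely formal: one simply chains two universal properties. The only point requiring a little care is the uniqueness step, where one must remember that an averaging homomorphism is first of all an ordinary $R$-algebra homomorphism, so that $\psi \circ i_{R[X]}$ really lands in the setting to which the universal property of $R[X]$ applies; and then that the two uniqueness assertions (for $R[X]$ and for $F_{R[X]}$) are invoked in the correct order.
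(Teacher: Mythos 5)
Your proposal is correct and is essentially identical to the paper's own proof: both chain the universal property of $R[X]$ as the free commutative $R$-algebra on $X$ with Theorem \ref{thm1} applied to the algebra $R[X]$, and both establish uniqueness by observing that $\psi \circ i_{R[X]}$ is an $R$-algebra homomorphism extending $\eta$, hence equals $\bar{\eta}$, forcing $\psi = \hat{\varphi}$. No discrepancies to report.
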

 \begin{proof}
  Let $(B,g)$ be an averaging $R$-algebra, and
   $\eta : X \rightarrow B$  a map.
  Since $ R[X]$ is a free $R$-algebra on $X$, there exists a
  unique $R$-algebra homomorphism
  \[ \varphi : R[X] \rightarrow B \]
  such that  $\varphi \circ j = \eta$.  In turn we have a unique
  averaging homomorphism
   \[\hat {\varphi} : (F_{R[X]}, f_{R[X]}) \rightarrow (B, g) \]
  such that $\hat {\varphi} \circ i_{R[X]} = \varphi$. Therefore
  \begin{eqnarray*}
  && \hat {\varphi} \circ i = \hat {\varphi} \circ (i_{R[X]} \circ j)
  \\ && = ( \hat {\varphi} \circ i_{R[X]}) \circ j
  \\ && = \varphi \circ j = \eta.
  \end{eqnarray*}

  If $\psi : F_{R[X]} \rightarrow B$ is another averaging
  homomorphism satisfying $\psi \circ i = \eta$, then
  $(\psi \circ i_{R[X]}) \circ j = \eta$. Since both $\psi$ and
  $i_{A}$ are $R$-algebra homomorphisms, $\psi \circ i_{R[X]}$ is an
  $R$-algebra homomorphism, and we have $\psi \circ i_{R[X]} = \varphi$
  due to the uniqueness of $\varphi$, which in turn implies
  $\psi = \hat {\varphi}$. \wbox
  \end{proof}

This construction is interesting in that it is a combination of
two free structures. The same approach works for other kinds of
free objects on a set such as free Reynolds algebras or free
Baxter algebras.

It is important to notice that $F_{R[X]}$ is a polynomial algebra.
If we use $\Theta(X)$ to denote  the set of nonzero monomials of
$R[X]$ with coefficient $1_R$, we can   define a set $Y$ which is
indexed by $\Theta(X)$ :
\[ Y = \{y_{\theta} : \theta \in \Theta(X) \}. \]
For any
   $\theta_1, \theta_2 \in  \Theta(X)$,  $y_{\theta_1} = y_{\theta_2}$ if and only if
  $\theta_1 = \theta_2$. Also note that
$X$ and $Y$ are disjoint. We have
\begin{eqnarray*}
&& S(R[X]) \cong R[Y], \;\; and
\\&& F_{R[X]} \cong R[X] \otimes R[Y] \cong R[X \cup Y ].
\end{eqnarray*}
We now give another description of the free averaging $R$-algebra
on the set $X$.

Let $F_X = R[X \cup Y]$. Define an $R$-endomorphism
   \[f_X : F_X \rightarrow F_X \]
by
   \[f_X(uv) = y_uv, \]
where $u \in \Theta(X)$, $v \in R[Y]$. We then extend the
definition of $f_X$ by $R$-linearity.

\begin{lemma}
The $R$-endomorphism $f_X$ is an averaging operator on $F_X$.
\end{lemma}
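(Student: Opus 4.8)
The plan is to reduce the averaging identity to a purely combinatorial statement about monomials and then verify it by a direct computation, exploiting the unique factorization of monomials in $R[X \cup Y]$.

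First I would observe that $f_X$ is well-defined. Since $X$ and $Y$ are disjoint sets of indeterminates, every monomial $w$ of $F_X = R[X \cup Y]$ factors uniquely as $w = uv$, where $u \in \Theta(X)$ is the product of the $X$-variables occurring in $w$ and $v$ is a monomial in the $Y$-variables. Thus the assignment $uv \mapsto y_u v$ is unambiguous on monomials, and its $R$-linear extension gives a well-defined $R$-module endomorphism $f_X$ of $F_X$. In particular this covers the degenerate case where $w$ has no $X$-variables, via $u = 1_R \in \Theta(X)$, so that $f_X(v) = y_{1_R}v$.

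Next I would note that it suffices to check the averaging identity $f_X(xf_X(y)) = f_X(x)f_X(y)$ when $x$ and $y$ range over monomials. Both sides are $R$-bilinear in $(x,y)$: the right-hand side visibly so, and the left-hand side because $f_X$ is $R$-linear and multiplication is $R$-bilinear, so $x \mapsto f_X(xf_X(y))$ and $y \mapsto f_X(xf_X(y))$ are each $R$-linear. Since the monomials span $F_X$ over $R$, equality on monomials forces equality in general.

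Finally I would carry out the computation on a pair of monomials. Writing $x = u_1 v_1$ and $y = u_2 v_2$ with $u_1, u_2 \in \Theta(X)$ and $v_1, v_2$ monomials in the $Y$-variables, one has $f_X(y) = y_{u_2}v_2 \in R[Y]$, so $xf_X(y) = u_1(v_1 y_{u_2} v_2)$ is a monomial whose $X$-part is $u_1$ and whose $Y$-part is $v_1 y_{u_2} v_2$. Applying $f_X$ yields $y_{u_1} v_1 y_{u_2} v_2$, which is exactly $f_X(x)f_X(y) = (y_{u_1}v_1)(y_{u_2}v_2)$ because $R[Y]$ is commutative. This establishes the identity. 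The computation itself is routine; the only point demanding care is the bookkeeping of the unique $X$-part/$Y$-part factorization, and once well-definedness and bilinearity are in place there is no real obstacle.
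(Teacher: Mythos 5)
Your proof is correct and follows essentially the same route as the paper: both reduce the averaging identity to the computation $f_X(u_1v_1\cdot y_{u_2}v_2)=y_{u_1}v_1y_{u_2}v_2=f_X(u_1v_1)f_X(u_2v_2)$ on monomials via the unique $X$-part/$Y$-part factorization (the paper writes it out for general sums, which amounts to the same bilinearity reduction). Your explicit remarks on well-definedness are a welcome extra detail but not a different method.
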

\begin{proof}
Take $ a = \sum_i u_iv_i$ and  $ b = \sum_js_jt_j \in F_X$, where
$u_i$ and $s_j \in \Theta(X)$,
 $v_i$ and  $t_j \in R[Y]$. Then we have
\begin{eqnarray*}
&& f_X(af_X(b))  = f_X((\Sigma_iu_iv_i)(\Sigma_jy_{s_j}t_j))
 \\&&  = f_X(\Sigma_i\Sigma_j(u_iv_iy_{s_j}t_j))= \Sigma_i\Sigma_j(y_{u_i}v_iy_{s_j}t_j)
\\&&  = (\Sigma_iy_{u_i}v_i)(\Sigma_jy_{s_j}t_j) =  f_X(a) f_X(b).
\end{eqnarray*}
Hence $f_X$ is an averaging operator. \wbox

\end{proof}

\begin{theorem}
Together with the inclusion map
 $i_{X}: X \rightarrow F_X$, the averaging $R$-algebra
 $(F_{X}, f_{X})$ is a free averaging $R$-algebra on $X$.
 \label{freeOnX}
\end{theorem}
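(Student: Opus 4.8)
The plan is to deduce the result from Theorem~\ref{thm2} rather than re-establish a universal property from scratch. The text has already recorded the $R$-algebra isomorphisms $S(R[X])\cong R[Y]$ and $F_{R[X]}=R[X]\otimes S(R[X])\cong R[X]\otimes R[Y]\cong R[X\cup Y]=F_X$; let $\Phi\colon F_{R[X]}\to F_X$ denote the composite isomorphism. Concretely $\Phi$ fixes each variable $x\in X$ and sends the degree-one element $\theta\in S^{1}(R[X])=R[X]$, for a monomial $\theta\in\Theta(X)$, to the indeterminate $y_\theta$. The first task is to record $\Phi$ explicitly on the $R$-module basis of $F_{R[X]}$ consisting of the elements $u\otimes(\theta_1\odot\cdots\odot\theta_k)$ with $u,\theta_1,\dots,\theta_k\in\Theta(X)$, namely $\Phi(u\otimes(\theta_1\odot\cdots\odot\theta_k))=u\,y_{\theta_1}\cdots y_{\theta_k}\in R[X\cup Y]$. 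I would then show that this $\Phi$ is an averaging isomorphism carrying $i$ to $i_X$, which needs just two verifications.

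The first is that $\Phi$ intertwines the two averaging operators, i.e. $\Phi\circ f_{R[X]}=f_X\circ\Phi$. By $R$-linearity it is enough to test this on the basis elements above. Writing $v=y_{\theta_1}\cdots y_{\theta_k}$, on one side $f_{R[X]}(u\otimes(\theta_1\odot\cdots\odot\theta_k))=1_{R[X]}\otimes(u\odot\theta_1\odot\cdots\odot\theta_k)$, and since the extra degree-one factor $u$ is carried by $\Phi$ to $y_u$, this maps to $y_u v$. On the other side $f_X(\Phi(u\otimes(\theta_1\odot\cdots\odot\theta_k)))=f_X(uv)=y_u v$ by the defining formula for $f_X$. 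The two agree, so the intertwining relation holds. The point to watch is exactly that $f_{R[X]}$, which raises the symmetric degree by one, corresponds under $\Phi$ to the ``replace the $X$-part $u$ by $y_u$'' action of $f_X$; this is the only load-bearing step, and it runs uniformly, including the case $u=1_R$, where $y_u=y_{1_R}$.

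The second verification is $\Phi\circ i=i_X$, where $i=i_{R[X]}\circ j$: indeed $i(x)=i_{R[X]}(x)=x\otimes 1_R$ and $\Phi(x\otimes 1_R)=x=i_X(x)$. With both facts in hand, $\Phi$ is an averaging isomorphism sending $i$ to $i_X$, and freeness transfers formally: given any averaging $R$-algebra $(B,g)$ and map $\eta\colon X\to B$, Theorem~\ref{thm2} supplies the unique averaging homomorphism $\hat\varphi\colon(F_{R[X]},f_{R[X]})\to(B,g)$ with $\hat\varphi\circ i=\eta$, and then $\hat\varphi\circ\Phi^{-1}$ is the unique averaging homomorphism out of $(F_X,f_X)$ satisfying the required equation over $i_X$. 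I expect the main obstacle to be bookkeeping rather than mathematics: one must make the chain of identifications $S(R[X])\cong R[Y]$ and $R[X]\otimes R[Y]\cong R[X\cup Y]$ precise enough that the ``degree-one $\theta$'' really does become $y_\theta$, so that the two descriptions of the averaging action match on the nose.

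Alternatively, one can argue directly and avoid tracking $\Phi$. Every algebra generator of $F_X=R[X\cup Y]$ is either some $x\in X$ or some $y_\theta$, and $y_\theta=f_X(\theta)$ since $\theta\in\Theta(X)$; hence any averaging homomorphism $\psi$ with $\psi\circ i_X=\eta$ is forced to satisfy $\psi(x)=\eta(x)$ and $\psi(y_\theta)=g(\psi(\theta))$, which yields uniqueness. Conversely one defines $\hat\varphi$ on $X\cup Y$ by these same formulas, extends it to an $R$-algebra homomorphism (legitimate because $F_X$ is the free commutative $R$-algebra on $X\cup Y$), and verifies $\hat\varphi\circ f_X=g\circ\hat\varphi$ on monomials $uv$. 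There the one delicate point is the identity $g(\hat\varphi(u)\hat\varphi(v))=g(\hat\varphi(u))\hat\varphi(v)$: it holds because for a nonconstant $Y$-monomial $v$ the image $\hat\varphi(v)$ lies in $g(B)$ (the product of the factors $g(\hat\varphi(\theta_i))$, using that $g(B)$ is closed under multiplication, as shown earlier for any averaging algebra), so the averaging identity $g(a\,g(b))=g(a)g(b)$ applies, while for $v=1$ the identity is trivial.
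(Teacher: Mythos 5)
Your proposal is correct, and your primary route is genuinely different from the paper's. The paper, after recording the isomorphism $F_{R[X]}\cong R[X\cup Y]$, does \emph{not} transfer the universal property through it; instead it treats $(F_X,f_X)$ as a fresh object and verifies freeness directly -- exactly the argument you sketch as your ``alternative'': define $\hat\varphi(x)=\eta(x)$, $\hat\varphi(y_\theta)=g(\varphi(\theta))$, check $\hat\varphi\circ f_X=g\circ\hat\varphi$ on monomials $uv$ (splitting into the cases $v=1$ and $v=y_{\theta_1}\cdots y_{\theta_k}$, where the averaging identity is applied because the product of the $g(\varphi(\theta_i))$ lies in $g(B)$), and get uniqueness from $y_\theta=f_X(\theta)$. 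Your main route -- exhibiting $\Phi\colon F_{R[X]}\to F_X$ explicitly on the basis $u\otimes(\theta_1\odot\cdots\odot\theta_k)\mapsto u\,y_{\theta_1}\cdots y_{\theta_k}$, checking $\Phi\circ f_{R[X]}=f_X\circ\Phi$ and $\Phi\circ i=i_X$, and then transporting freeness from Theorem~\ref{thm2} -- is sound: the intertwining computation is exactly right (including the case $u=1_R$), and the formal transfer of the universal property along an averaging isomorphism compatible with the structure maps is routine. What the transfer buys you is that the averaging-identity verification in an arbitrary $(B,g)$ is done once (in Theorem~\ref{thm1}/\ref{thm2}) and never repeated; the cost is the bookkeeping of making $S(R[X])\cong R[Y]$ and $R[X]\otimes R[Y]\cong R[X\cup Y]$ precise and multiplicative. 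What the paper's direct route buys is a self-contained description of $\hat\varphi$ on the generators $X\cup Y$ of the polynomial ring, which is reused later (for the quotients by $I_0$, $I_1$ and in the decision-problem section). Either argument is complete.
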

\begin{proof}
Let $(B, g)$ be an averaging $R$-algebra,
 and $\eta : X \rightarrow B$ be a map. We need to show that there exists a unique
 averaging homomorphism $\hat {\varphi} : (F_X, f_X) \rightarrow (B,g)$, such that
 $\hat {\varphi} \circ i_X = \eta$.

  Let
 $\varphi : R[X] \rightarrow B$ be the unique $R$-algebra homomorphism
 induced by $\eta$, such that
 $\varphi \circ j = \eta$, where $j : X \rightarrow R[X]$ is the inclusion
 map.

  Define an $R$-algebra homomorphism
 \[ \hat {\varphi} : F_{X} \rightarrow B \]
by

(i) for each  $x \in X$, $\hat {\varphi}(x) = \eta (x)$,  and

(ii) for  each  $y_{\theta} \in Y$, where $\theta \in \Theta(X)$,
  $\hat {\varphi}(y_{\theta}) = g(\varphi(\theta))$.

Note that for $u \in \Theta(X)$,
 $\hat {\varphi}(u) = \varphi(u)$.

Since for each $x \in X$,
 \[ \hat {\varphi}\circ i_X (x) = \hat {\varphi}(x)  = \eta (x),\]
we have $\hat {\varphi} \circ i_X = \eta$.
 Now we verify that $\hat
{\varphi}$ is an averaging homomorphism, i.e.  $g \circ \hat
{\varphi}(a) = \hat {\varphi} \circ f_{X}(a)$ for all $ a \in
F_{X}$. We only need to check for the monomials with coefficient
$1_R$ in $F_X$. Take such a monomial
  $a = uv \in F_X$, where $u \in \Theta(X)$ and
  $v \in R[Y]$.

If $v = 1$, then
\begin{eqnarray*}
&& g \circ \hat {\varphi}(a) = g \circ \hat {\varphi}(u)
 \\ && = g(\hat {\varphi}(u)) = g(\varphi(u))
 \\ && = \hat {\varphi}(y_u) = \hat {\varphi} \circ f_X(u)
 \\ && = \hat {\varphi} \circ f_X(a).
\end{eqnarray*}

If $v = y_{\theta_1} ...y_{\theta_k}$ for some
 $\theta_1, ..., \theta_k \in \Theta(X)$, then
\begin{eqnarray*}
&& g \circ \hat {\varphi}(a) = g \circ \hat {\varphi}(uv)
 \\ && = g(\hat {\varphi}(u)\hat {\varphi}(y_{\theta_1})...\hat
 {\varphi}(y_{\theta_k}))
 \\ &&  = g(\varphi(u)g(\varphi(\theta_1))...g(\varphi(\theta_1)))
 \\ && =  g(\varphi(u))g(\varphi(\theta_1))...g(\varphi(\theta_1))
 \\ && =  \hat {\varphi}(y_u)  \hat {\varphi}(y_{\theta_1})...\hat {\varphi}(y_{\theta_k})
 \\ && = \hat {\varphi}(y_u y_{\theta_1} ... y_{\theta_k})
 \\ && = \hat {\varphi}(f_X(uy_{\theta_1} ... y_{\theta_k}))
 \\ && = \hat {\varphi}\circ f_X (uv) = \hat {\varphi}\circ f_X(a).
\end{eqnarray*}

If $\psi: (F_{X},f_{X}) \rightarrow (B,g)$ is also an averaging
homomorphism, and  $\psi \circ i_{X} = \eta $, then $\psi(\theta)
= \varphi(\theta)$ for all $\theta \in \Theta(X)$.

(i) for each  $x\in X$,
  $\psi(x)= \psi \circ i_{X}(x)=\eta(x)=\hat{\varphi}(x). $

(ii) for each $y_{\theta} \in Y$, where $\theta \in \Theta(X)$,
 $ \psi(y_{\theta}) = \psi (f_{X}(\theta)) = g(\psi (\theta)) = g(\varphi(\theta)) = \hat
 {\varphi}(y_{\theta}).$
Hence $\psi= \hat{\varphi}$ and  we have proved the theorem. \wbox
\end{proof}

\begin{coro}
A free averaging $R$-algebra on an $R$-algebra $A$ is (isomorphic
to) a polynomial algebra if $A$ is a polynomial algebra.
\end{coro}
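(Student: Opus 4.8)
The plan is to unwind the construction $F_A = A \otimes S(A)$ from Theorem~\ref{thm1} in the special case $A = R[X]$ and show that it collapses to a polynomial ring. By Theorem~\ref{thm1}, a free averaging $R$-algebra on $A$ is, up to averaging isomorphism, the pair $(F_A, f_A)$ whose underlying $R$-algebra is $F_A = A \otimes S(A)$. Since the corollary concerns only the underlying $R$-algebra, it suffices to identify $F_A$ as an $R$-algebra, and the averaging operator $f_A$ may be ignored.

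First I would record the module structure of a polynomial algebra: if $A = R[X]$, then $A$ is a free $R$-module with basis the set $\Theta(X)$ of monomials having coefficient $1_R$. The structural fact I would then invoke is that the symmetric algebra of a free $R$-module is a polynomial algebra on any basis; that is, for a free $R$-module $M$ with basis $B$ one has $S(M) \cong R[B]$ as $R$-algebras. Applying this with $M = A = R[X]$ and $B = \Theta(X)$ yields $S(A) = S(R[X]) \cong R[Y]$, where $Y = \{y_\theta : \theta \in \Theta(X)\}$ is the indexing set introduced just before Theorem~\ref{freeOnX}, the isomorphism carrying each basis monomial $\theta$ to the variable $y_\theta$.

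It then remains to combine the two tensor factors. Substituting into $F_A = A \otimes S(A)$ gives $F_A \cong R[X] \otimes_R R[Y]$, and the standard fact that the tensor product over $R$ of two polynomial algebras is the polynomial algebra on the disjoint union of their variable sets yields $R[X] \otimes_R R[Y] \cong R[X \cup Y]$, using that $X$ and $Y$ are disjoint. Hence $F_A$ is isomorphic to the polynomial algebra $R[X \cup Y]$, which is precisely the chain of isomorphisms already recorded preceding Theorem~\ref{freeOnX}.

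The argument is mostly bookkeeping of module and algebra structures, so there is no serious obstacle. The one point needing care — and the only genuinely nontrivial input — is the identification $S(R[X]) \cong R[Y]$: one must observe that $R[X]$ is free as an $R$-\emph{module} (not merely free as an algebra) and then appeal to the universal property of the symmetric algebra, verifying that a basis of monomials of $R[X]$ becomes an algebraically independent generating family of $S(R[X])$. Everything else is a direct consequence of the universal properties of polynomial algebras and of tensor products.
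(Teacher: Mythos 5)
Your proposal is correct and follows essentially the same route as the paper: the paper proves the corollary implicitly via the chain of isomorphisms $S(R[X]) \cong R[Y]$ and $F_{R[X]} \cong R[X]\otimes R[Y] \cong R[X\cup Y]$ recorded just before Theorem~\ref{freeOnX}, which is exactly the argument you reconstruct. Your added remark that one must use freeness of $R[X]$ as an $R$-\emph{module} (with basis $\Theta(X)$) to identify the symmetric algebra is a correct and worthwhile clarification of the step the paper leaves unstated.
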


Note that if $X = \emptyset$, then $\Theta(X) = \{1_R \}$ and
  $Y= \{y_{1_R} \}$.  The free averaging $R$-algebra on $X =
\emptyset$ is
  $(F_{\emptyset}, f_{\emptyset})$, where
  $F_{\emptyset} = R[y_{1_R}]$,  and $f_{\emptyset}(v) = y_{1_R}v$ for
  all $ v \in F_{\emptyset}$.

We now show one application of the free averaging algebras. An
averaging $R$-algebra $(A, f)$ is called {\bf primary} if it has no
proper averaging subalgebras. We will use the free averaging
$R$-algebra on the empty set, i.e. $(F_{\emptyset},
f_{\emptyset})$, to describe primary averaging $R$-algebras.
\begin{prop}
An averaging $R$-algebra $(A, f)$ is primary if and only if it is
isomorphic to a quotient averaging $R$-algebra
 $(F_{\emptyset}/I ,\bar f_{\emptyset})$, where $I$ is an
 averaging ideal of $(F_{\emptyset}, f_{\emptyset})$ such that
 $I \neq F_{\emptyset}$.
\end{prop}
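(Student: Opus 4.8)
The plan is to use the universal property of $(F_\emptyset, f_\emptyset)$ as the free averaging $R$-algebra on the empty set. Since there is a unique (empty) map $\emptyset \to A$, this universal property furnishes a unique averaging homomorphism $\pi : (F_\emptyset, f_\emptyset) \to (A, f)$. Because $\pi$ is an $R$-algebra homomorphism we have $\pi(1_R) = 1_A$, and because it is averaging we have $\pi(y_{1_R}) = \pi(f_\emptyset(1_R)) = f(\pi(1_R)) = f(1_A)$. As $F_\emptyset = R[y_{1_R}]$ is generated as an $R$-algebra by $y_{1_R}$, the image $\pi(F_\emptyset)$ is exactly the $R$-subalgebra $R[f(1_A)]$ generated by $f(1_A)$. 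This image is an averaging subalgebra (it contains $1_A$, is an $R$-subalgebra, and is closed under $f$ since $f(\pi(a)) = \pi(f_\emptyset(a))$), and it is in fact the smallest one, since any averaging subalgebra of $(A,f)$ must contain $1_A$ and be closed under $f$. Hence the whole statement reduces to the remark that $(A,f)$ is primary if and only if $\pi$ is surjective.

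For the forward implication I would argue as follows: suppose $(A,f)$ is primary. Then the averaging subalgebra $\pi(F_\emptyset)$ cannot be proper, so $\pi$ is surjective. Put $I = \ker \pi$. This $I$ is an averaging ideal, for it is an ideal as the kernel of an algebra homomorphism, and if $\pi(a) = 0$ then $\pi(f_\emptyset(a)) = f(\pi(a)) = 0$, whence $f_\emptyset(I) \subseteq I$. The ordinary first isomorphism theorem gives an $R$-algebra isomorphism $\bar\pi : F_\emptyset/I \to A$, and the identity $\pi \circ f_\emptyset = f \circ \pi$ descends to $\bar\pi \circ \bar f_\emptyset = f \circ \bar\pi$, so $\bar\pi$ is an averaging isomorphism $(F_\emptyset/I, \bar f_\emptyset) \cong (A, f)$. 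Finally $I \neq F_\emptyset$, since otherwise $A \cong F_\emptyset/F_\emptyset = 0$ would force $1_A = 0_A$, contrary to our standing assumption that $A$ is a nonzero $R$-algebra.

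For the converse I would first note that primariness is preserved under averaging isomorphism, so it suffices to show that $(F_\emptyset/I, \bar f_\emptyset)$ is primary whenever $I \neq F_\emptyset$. Let $B$ be any averaging subalgebra of $F_\emptyset/I$. Then $B$ contains the identity $\overline{1_R}$ and, being closed under $\bar f_\emptyset$, it also contains $\bar f_\emptyset(\overline{1_R}) = \overline{f_\emptyset(1_R)} = \overline{y_{1_R}}$. But $F_\emptyset/I$ is generated as an $R$-algebra by $\overline{y_{1_R}}$ (since $F_\emptyset = R[y_{1_R}]$), so any $R$-subalgebra containing $\overline{y_{1_R}}$ must equal the whole quotient. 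Therefore $B = F_\emptyset/I$, and $(F_\emptyset/I, \bar f_\emptyset)$ has no proper averaging subalgebra.

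I expect no genuine obstacle here; the argument is driven entirely by the universal property and by the fact that $F_\emptyset$ is singly generated as an $R$-algebra by $y_{1_R} = f_\emptyset(1_R)$. The only points requiring care are bookkeeping: recording the (routine) first-isomorphism-theorem argument in the averaging category, for which I would invoke the earlier proposition that an averaging ideal induces a well-defined $\bar f$ together with an averaging quotient map, and handling the degenerate case $A = 0$, which is precisely what the hypothesis $I \neq F_\emptyset$ rules out.
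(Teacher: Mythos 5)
Your proposal is correct and follows essentially the same route as the paper: the forward direction uses the universal property of $(F_\emptyset, f_\emptyset)$ to produce the unique averaging homomorphism onto $A$, whose image must be all of $A$ by primariness, and the converse rests on the observation that any averaging subalgebra of $F_\emptyset/I$ contains both $1_R + I$ and $y_{1_R} + I = \bar f_\emptyset(1_R + I)$ and hence equals the whole quotient. You simply spell out a few steps the paper leaves implicit (that $\ker\pi$ is an averaging ideal, the descent of the isomorphism, and the nonzero-algebra caveat for $I \neq F_\emptyset$).
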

\begin{proof}
 Recall that $F_{\emptyset} = R[y_{1_R}]$, and $f_{\emptyset}(v) = y_{1_R}v$ for
 all $ v \in F_{\emptyset}$.

 $( \Leftarrow )$  we prove that for any averaging ideal I of $(F_{\emptyset},
f_{\emptyset})$ such that  $I \neq F_{\emptyset}$, the quotient
averaging $R$-algebra
 $(F_{\emptyset}/I ,\bar f_{\emptyset})$ is primary.

 Suppose that $(S, \bar f_{\emptyset})|_S)$ is an averaging subalgebra of $(F_{\emptyset}/I ,
 \bar f_{\emptyset})$, then the identity element of $F_{\emptyset}/I$, i.e. $1_R + I  \in S$,
 and $y_{1_R}+ I = f_{\emptyset}(1_R)+ I = \bar f_{\emptyset}(1_R + I ) \in
 S$, hence $S = R[y_{1_R}]/I = F_{\emptyset}/I$.

$ ( \Rightarrow )$  According to the definition of free averaging
$R$-algebra, there exists a unique averaging homomorphism
\[ \hat{\varphi} : (F_{\emptyset}, f_{\emptyset}) \rightarrow (A,
f) .\] Since the image of $\hat {\varphi}$ is an averaging
subalgebra of $(A, f)$, and $(A, f)$ is primary, we conclude that
$\hat {\varphi}$ is surjective. Therefore if we take $I = ker \hat
{\varphi}$, we have $(A,f) \cong (F_{\emptyset}/I ,\bar
f_{\emptyset})$. \wbox
\end{proof}

\subsection{Free unitary averaging algebras and free\\
Reynolds-averaging algebras}

In this section, we consider  free objects for subcategories of
averaging $R$-algebras.

\begin{defn}
 Let $f$ be an averaging operator on  an $R$-algebra $A$. $f$ and
$(A, f)$ are  called {\bf unitary}  if
 $f(1_{A}) = 1_{A}$.
\end{defn}

\begin{defn} An averaging operator $f$  on  an $R$-algebra $A$ is
called a {\bf Reynolds-averaging operator} and $(A,f)$ is called a
{\bf Reynolds-averaging algebra }   if f also satisfies the {\bf
Reynolds identity }:
 \[ f(x)f(y) + f(f(x)f(y)) = f(xf(y)) + f(yf(x)) \]
   for all $x, y \in A$.
\end{defn}
Clearly, an averaging operator $f$ is a Reynolds operator if and
only it satisfies
\[f(f(x)f(y)) = f(x)f(y)\]
for all $x, y \in A$.

\begin{defn}
Let $X$ be a  set, $(F, f)$ a unitary averaging
(Reynolds-averaging) $R$-algebra, and $i: X \rightarrow F $ a map.
$(F, f)$ together with the map
  $i: X \rightarrow F $
  is said to be a {\bf free unitary averaging (free Reynolds-averaging) $R$-algebra on
  $X$ }
   if for any
 unitary  averaging (Reynolds-averaging) $R$-algebra $(B, g)$ and any map
 $\eta : X \rightarrow B$,  there exists a unique $R$-algebra
 homomorphism
  \[ \hat \varphi : (F, f) \rightarrow (B, g) \]
 such that $\hat \varphi \circ f = g \circ \hat \varphi  $ and $\hat \varphi \circ i = \eta $.
\end{defn}

Let $(F_{X}, f_{X})$ be the free averaging $R$-algebra on $X$, as
we constructed in the last subsection. Let $I_0$ be the ideal of
the $R$-algebra $F_X = R[X \cup Y]$ generated by the element
$y_{1_R} - 1$, i.e.
\[ I_0 = \{ \Sigma_i (y_{1_R} - 1_R)u_i : u_i \in F_X \}.  \]
Also let $I_1$ be the ideal of the $R$-algebra $F_X = R[X \cup Y]$
generated by the elements in the form $yy'y_{1_R} - yy'$, i.e.
\[ I_1 = \{ \Sigma_i (yy'y_{1_R} - yy')u_i :  y, y' \in Y , u_i \in F_X \}.  \]

 Actually, both $I_{0}$ and $I_{1}$ are averaging ideals of
$(F_{X}, f_{X})$. Therefore, $f_{X}$ induces  averaging operators
$\bar{f_{i}}$ on $F_{X}/I_{i}$  ( $ i = 0$ or $1$ ) by
 \[ \bar{f_{i}}( \alpha + I_{i}) = f_{X}(\alpha) + I_{i}. \]
\begin{prop}
The averaging operator $\bar{f_{0}}$ on  the quotient algebra
$F_{X}/I_{0}$ is unitary, and the averaging operator $\bar{f_{1}}$
on  the quotient $R$-algebra $F_{X}/I_{1}$ is a Reynolds-averaging
operator.
\end{prop}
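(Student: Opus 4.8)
The plan is to handle the two assertions separately, each time reducing to the defining generators of the relevant ideal. A preliminary step, implicit in the sentence preceding the proposition, is to confirm that $I_0$ and $I_1$ really are averaging ideals, so that $\bar{f_0}$ and $\bar{f_1}$ are well-defined averaging operators; this is immediate, since for a monomial $uv$ with $u\in\Theta(X)$ and $v\in R[Y]$ one finds $f_X((y_{1_R}-1_R)uv)=(y_{1_R}-1_R)y_uv\in I_0$ and $f_X((yy'y_{1_R}-yy')uv)=(yy'y_{1_R}-yy')y_uv\in I_1$, and the general case follows by linearity.

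For the unitary assertion I would evaluate $\bar{f_0}$ on the identity $1_R+I_0$ of $F_X/I_0$. Regarding $1_R$ as the monomial $uv$ with $u=1_R\in\Theta(X)$ the empty $X$-monomial and $v=1\in R[Y]$, the definition of $f_X$ yields $f_X(1_R)=y_{1_R}$, so $\bar{f_0}(1_R+I_0)=y_{1_R}+I_0$. Because $y_{1_R}-1_R$ generates $I_0$, this class equals $1_R+I_0=1_{F_X/I_0}$, which is exactly the statement that $\bar{f_0}$ is unitary.

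For the Reynolds assertion I would use the criterion recorded just after the definition: an averaging operator is a Reynolds-averaging operator precisely when $f(f(x)f(y))=f(x)f(y)$. By $R$-bilinearity it is enough to check this on classes of monomials, so take $\alpha=u_1v_1+I_1$ and $\beta=u_2v_2+I_1$ with $u_i\in\Theta(X)$ and $v_i\in R[Y]$. Then $\bar{f_1}(\alpha)\bar{f_1}(\beta)=y_{u_1}v_1y_{u_2}v_2+I_1$, whose representative lies wholly in $R[Y]$ (its $X$-part is the empty monomial), so a further application of $\bar{f_1}$ merely multiplies by $y_{1_R}$, giving $y_{1_R}y_{u_1}v_1y_{u_2}v_2+I_1$. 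The crux is that this representative contains the two explicit $Y$-factors $y_{u_1}$ and $y_{u_2}$: writing $w=v_1v_2$, the generator $y_{u_1}y_{u_2}y_{1_R}-y_{u_1}y_{u_2}$ of $I_1$ multiplied by $w$ shows $y_{1_R}y_{u_1}v_1y_{u_2}v_2-y_{u_1}v_1y_{u_2}v_2\in I_1$. Hence $\bar{f_1}(\bar{f_1}(\alpha)\bar{f_1}(\beta))=\bar{f_1}(\alpha)\bar{f_1}(\beta)$, and by the criterion $\bar{f_1}$ is a Reynolds-averaging operator.

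I expect the one point needing care to be the bookkeeping in the final step: one must ensure that $\bar{f_1}(\alpha)\bar{f_1}(\beta)$ always carries at least two factors from $Y$, since this is exactly what allows the relation $yy'y_{1_R}-yy'\in I_1$ to be applied, and it explains why $I_1$ is generated by those two-factor elements rather than by $yy_{1_R}-y$. Everything else amounts to direct substitution into the definitions of $f_X$, $I_0$ and $I_1$.
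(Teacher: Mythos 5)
Your proof is correct and follows essentially the same route as the paper: evaluate $\bar{f_0}$ on $1_R+I_0$ using $y_{1_R}+I_0=1_R+I_0$, and for $\bar{f_1}$ reduce the Reynolds criterion $f(f(x)f(y))=f(x)f(y)$ on monomials to the membership $(y_{u_1}y_{u_2}y_{1_R}-y_{u_1}y_{u_2})v_1v_2\in I_1$. The only addition is your explicit check that $I_0$ and $I_1$ are averaging ideals, which the paper asserts without proof; that verification is correct and worth having.
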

\begin{proof}
Since $y_{1_R} - 1_R \in I_0$, we have $y_{1_R} + I_{0} = 1_{R} +
I_{0}$. Therefore
\begin{eqnarray*}
&&  \bar{f_{0}}( 1_{R} + I_{0}) = f_{X}(1_{R}) + I_{0}
\\ && = y_{1_R} + I_{0} = 1_{R} + I_{0}.
\end{eqnarray*}
Hence  $\bar{f_{0}}$ is unitary.
 To see $\bar{f_{1}}$ is a Reynolds-averaging operator, it is
 enough to notice that for all $\alpha = u_1v_1, \beta = u_2v_2 \in F_{X}$,
  where  $u_1, u_2 \in \Theta(X)$, $v_1, v_2 \in R[Y]$,  we
 have
 \begin{eqnarray*}
 && f_{X}(f_{X}(\alpha)f_{X}(\beta)) - f_{X}(\alpha)f_{X}(\beta)
 \\&& = f_X(y_{u_1}y_{u_2}v_1v_2) - y_{u_1}y_{u_2}v_1v_2
 \\&& =y_{1_R} y_{u_1}y_{u_2}v_1v_2 - y_{u_1}y_{u_2}v_1v_2
 \\ && =  (y_{u_1}y_{u_2}y_{1_R} - y_{u_1}y_{u_2})v_1v_2,
 \end{eqnarray*}
 therefore
 \[f_{X}(f_{X}(\alpha)f_{X}(\beta)) - f_{X}(\alpha)f_{X}(\beta) \in
 I_{1},\]
  and we have
  \[ \bar f_{1}(\bar f_{1}(\alpha + I_1)\bar f_{1}(\beta + I_1)) = \bar f_{1}(\alpha+ I_1) \bar f_{1}(\beta+I_1)
 .\]
\wbox
\end{proof}

Let $\pi_{i} : F_{X} \rightarrow F_{X}/I_{i}$ be the canonical
$R$-algebra homomorphisms , $i = 0$ or $1$ , and let
 $j: X \rightarrow F_{X}$ be the inclusion map.

\begin{prop}
(i) Together with the map
  $i_{0}= \pi_{0} \circ j : X \rightarrow F_{X}/I_{0}$,
   $(F_{X}/I_{0}, \bar{f_0})$ is a free unitary averaging
$R$-algebra on $X$.

(ii)  Together with the map
  $i_{1}= \pi_{1} \circ j : X \rightarrow F_{X}/I_{1}$,
     $(F_{X}/I_{1}, \bar{f_1})$ is a free
Reynolds-averaging $R$-algebra on $X$. \wbox \label{prop1}
\end{prop}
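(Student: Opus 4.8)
The plan is to verify the required universal property for each quotient by factoring any admissible map through the free averaging algebra $(F_X, f_X)$ from Theorem~\ref{freeOnX}, and then checking that it descends to the quotient. For part (i), I would start with a unitary averaging $R$-algebra $(B,g)$ and a map $\eta: X \to B$. By Theorem~\ref{freeOnX} there is a unique averaging homomorphism $\hat\varphi: (F_X, f_X) \to (B,g)$ with $\hat\varphi \circ i_X = \eta$. The key point is to show that $\hat\varphi(I_0) = 0$, so that $\hat\varphi$ factors through $F_X/I_0$. Since $I_0$ is generated as an ideal by $y_{1_R} - 1_R$, it suffices to check $\hat\varphi(y_{1_R} - 1_R) = 0$. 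But $y_{1_R} = f_X(1_R)$, so $\hat\varphi(y_{1_R}) = \hat\varphi(f_X(1_R)) = g(\hat\varphi(1_R)) = g(1_B) = 1_B$ because $(B,g)$ is unitary, while $\hat\varphi(1_R) = 1_B$; hence the generator maps to $0$ and $\hat\varphi(I_0) = 0$. This yields an induced $R$-algebra homomorphism $\bar\varphi: F_X/I_0 \to B$ with $\bar\varphi \circ \pi_0 = \hat\varphi$, and one checks $\bar\varphi$ is an averaging homomorphism and satisfies $\bar\varphi \circ i_0 = \eta$.

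For part (ii) the argument is parallel, using the unitary/Reynolds case. Given a Reynolds-averaging algebra $(B,g)$ and a map $\eta: X \to B$, take the same $\hat\varphi$ from Theorem~\ref{freeOnX}. Now I must show $\hat\varphi(I_1) = 0$, for which it is enough to check the generators $yy'y_{1_R} - yy'$ for $y, y' \in Y$. Writing $y = y_{\theta}$, $y' = y_{\theta'}$, note $\hat\varphi(y_{\theta}) = g(\varphi(\theta)) \in g(B)$ and similarly for $y'$, and $\hat\varphi(y_{1_R}) = g(1_B)$. The element $yy'$ thus maps into a product of elements of $g(B)$, and applying the preceding remark (that an averaging operator $f$ is Reynolds exactly when $f(f(x)f(y)) = f(x)f(y)$) together with the Reynolds property of $g$ should force $\hat\varphi(yy'y_{1_R}) = \hat\varphi(yy')$. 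Concretely, $\hat\varphi(yy'y_{1_R}) = g(\varphi(\theta))g(\varphi(\theta'))g(1_B)$, and one uses that $g$ is Reynolds to collapse this back to $g(\varphi(\theta))g(\varphi(\theta'))$; this matches the computation in the preceding proposition showing the generators lie in the kernel. Once $\hat\varphi(I_1) = 0$ is established, the factorization through $F_X/I_1$, the averaging (indeed Reynolds-averaging) property of the induced map, and the compatibility with $\eta$ all follow as in part (i).

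In both parts uniqueness is inherited from the freeness of $(F_X, f_X)$: if $\psi: F_X/I_i \to B$ is another admissible map with $\psi \circ i_i = \eta$, then $\psi \circ \pi_i$ is an averaging homomorphism from $(F_X, f_X)$ to $(B,g)$ restricting to $\eta$ on $X$, so by the uniqueness clause of Theorem~\ref{freeOnX} it equals $\hat\varphi$; since $\pi_i$ is surjective, $\psi$ is determined, giving $\psi = \bar\varphi$.

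The main obstacle I expect is the kernel computation $\hat\varphi(I_1) = 0$ in part (ii): unlike the single generator in part (i), here one must verify the identity on all generators $yy'y_{1_R} - yy'$ and correctly translate the abstract Reynolds identity for $g$ into the collapse of the relevant product. The crucial simplification is the equivalent form $g(g(x)g(y)) = g(x)g(y)$ of the Reynolds condition noted just before Definition of Reynolds-averaging operators; applying it to $x = \varphi(\theta)$, $y = \varphi(\theta')$ is what makes the generators vanish. Everything else — well-definedness of the induced maps, the averaging property, and uniqueness — is routine bookkeeping built on the already-established freeness of $(F_X, f_X)$.
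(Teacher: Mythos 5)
Your argument is correct. The paper itself omits the proof, remarking only that it is ``similar to the proof of Theorem~\ref{freeOnX}'' --- i.e.\ the intended route is presumably a direct construction of the required homomorphism on the generators $X\cup Y$ of the quotient followed by an explicit verification, as in that theorem. You instead take the cleaner categorical route: use Theorem~\ref{freeOnX} as a black box to produce $\hat\varphi:(F_X,f_X)\to(B,g)$, show the defining ideal $I_i$ lies in $\ker\hat\varphi$, and descend. What this buys is that all the verification work (multiplicativity, compatibility with the operators, uniqueness) is inherited from the already-proved freeness of $(F_X,f_X)$ plus surjectivity of $\pi_i$, and the only new content is the kernel computation on generators. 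The computation for part (i) is complete as you state it. For part (ii), the one step you gesture at rather than carry out is the collapse $g(\varphi(\theta))g(\varphi(\theta'))g(1_B)=g(\varphi(\theta))g(\varphi(\theta'))$: to justify it, note that $g(u)g(v)=g(ug(v))\in g(B)$, so by the averaging identity $g(1_B)\cdot g(u)g(v)=g\bigl(1_B\cdot g(ug(v))\bigr)=g\bigl(g(u)g(v)\bigr)$, and then the equivalent form $g(g(u)g(v))=g(u)g(v)$ of the Reynolds condition finishes it. You correctly identify this as the crux and name the right tool, so this is a matter of writing out one line, not a gap. (You also implicitly rely on the preceding proposition of the paper to know that the quotients really are unitary, resp.\ Reynolds-averaging, which is fine to cite.)
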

We omit the proof since it is similar to the proof of  Theorem
{\ref{freeOnX}}. However we would like to mention that

(i) $F_{X}/I_{0}$ is (isomorphic to) the polynomial ring
    $R[Z]$, where $Z = X \cup (Y - \{y_{1_R}\})$.

(ii)  Although  $F_{X}/I_{1}$ is no longer  a polynomial ring, its
element $u$ can be written uniquely in the form
\[  u = U + Vy_{0} + Wy_{0},  \]
 where $U  \in R[X \cup (Y - \{y_{0}\}]$, $V  \in R[X]$ and
 $W \in F_{X}$ and every monomial of $W$ contains exactly
 one element in $Y$. The multiplication can be easily described.

To conclude  this section, we will describe in general  the free
objects for the  subcategory of averaging $R$-algebras which are
defined by a set of equations.

Let $E$ be a set of some equations
\[ E = \{ \phi_{i}(t_{1}, t_{2},..., t_{m} , f_{E}) = 0: i \in I
\},  \] where $f_{E}$ is a function symbol of arity 1 and
  $t_{1}, ...,t_{m}$ are symbols which are not elements of
 $X \cup Y$. Let
 \[T = \{t_{1}, ..., t_{m} \}. \]

  For any $R$-algebra $A$, let
  \[M(T, A) = \{ maps \;\;\sigma: T \rightarrow A \}. \]
Each equation $\phi_{i}(t_{1}, ..., t_{m}, f_{E})=0$ induces an
ideal $I(\phi_{i})$ of $(F_{X}, f_{X})$ (the free averaging
$R$-algebra on the set $X$) which is generated by the elements of
the the following form
\[ \phi_{i}(\sigma(t_{1}),..., \sigma(t_{m}), f_X), \]
where $\sigma \in M(T, F_{X})$. Let
  $\displaystyle I_{E}=\sum_{i \in I}I(\phi_{i})$.
   We say that an averaging $R$-algebra
$(B, g)$ satisfies the set $E$, if for any $\sigma \in M(T,B)$ and
any $i \in I$
\[ \phi_{i}(\sigma(t_{1}),...,\sigma(t_{m}), g) = 0\]
holds in $(B, g)$. We give the following theorem without proof.
\begin{theorem}
The quotient $R$-algebra $F_{X}/I_{E}$ with the induced averaging
operator $\bar{f_{X}}$ given by $\bar{f_{X}}(u + I_{E}) = f_{X}(u)
+ I_{E}$ is an averaging $R$-algebra satisfying the equation set
$E$, and
 $(F_{X}/I_{E}, \bar{f_{X}})$ is a free object on $X$ for the subcategory of
 averaging $R$-algebras which satisfy the  set of equations  $E$.
\end{theorem}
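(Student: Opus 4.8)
The plan is to follow the pattern of Theorem~\ref{freeOnX} and Proposition~\ref{prop1}, using the freeness of $(F_X, f_X)$ on $X$ as the essential input together with the universal mapping property of quotients. Three things must be checked: that $\bar{f_X}$ is well defined, that $(F_X/I_E, \bar{f_X})$ satisfies $E$, and that it enjoys the stated universal property. I would let $\pi_E : F_X \to F_X/I_E$ be the canonical projection and $i_E = \pi_E \circ i_X : X \to F_X/I_E$ the structure map.

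First I would address well-definedness of $\bar{f_X}$, which is the point needing care. For $\bar{f_X}(u + I_E) = f_X(u) + I_E$ to make sense, and for $\pi_E$ to be an averaging homomorphism, $I_E$ must be an averaging ideal, i.e. $f_X(I_E) \subseteq I_E$; granting this, well-definedness and the homomorphism property follow from the earlier Proposition on quotient averaging algebras. The subtlety is that the \emph{ordinary} ideal generated by substitution instances $\phi_i(\sigma(t_1), \ldots, \sigma(t_m), f_X)$ need not be stable under $f_X$ (already for an identity such as $t_1 t_1 = t_1$ the $f_X$-image of a generator can fall outside that ordinary ideal). Hence $I(\phi_i)$, and so $I_E = \sum_i I(\phi_i)$, must be read as the averaging ideal generated by those elements; with that reading $I_E$ is an averaging ideal by construction and $\bar{f_X}$ is well defined.

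Next I would verify that $(F_X/I_E, \bar{f_X})$ satisfies $E$. Given $i \in I$ and any $\bar\sigma \in M(T, F_X/I_E)$, I lift each $\bar\sigma(t_j)$ to $\sigma(t_j) \in F_X$ using surjectivity of $\pi_E$, obtaining $\sigma \in M(T, F_X)$ with $\pi_E \circ \sigma = \bar\sigma$. The generator $\phi_i(\sigma(t_1), \ldots, \sigma(t_m), f_X)$ lies in $I_E$, and applying $\pi_E$, which respects the algebra operations and intertwines $f_X$ with $\bar{f_X}$, yields $\phi_i(\bar\sigma(t_1), \ldots, \bar\sigma(t_m), \bar{f_X}) = \pi_E(\phi_i(\sigma(t_1), \ldots, \sigma(t_m), f_X)) = 0$. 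Since $i$ and $\bar\sigma$ are arbitrary, the quotient satisfies $E$.

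Finally I would establish the universal property, which is the main content and is formal once freeness of $F_X$ is in hand. Given an averaging $R$-algebra $(B, g)$ satisfying $E$ and a map $\eta : X \to B$, Theorem~\ref{freeOnX} yields a unique averaging homomorphism $\psi : (F_X, f_X) \to (B, g)$ with $\psi \circ i_X = \eta$. The key step is $I_E \subseteq \ker\psi$: for a generator $\phi_i(\sigma(t_1), \ldots, \sigma(t_m), f_X)$, putting $\tau = \psi \circ \sigma \in M(T, B)$ and using that $\psi$ is an averaging $R$-algebra homomorphism gives $\psi(\phi_i(\sigma(t_1), \ldots, \sigma(t_m), f_X)) = \phi_i(\tau(t_1), \ldots, \tau(t_m), g) = 0$ because $(B, g)$ satisfies $E$; since $\ker\psi$ is an averaging ideal it contains all of $I_E$. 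Thus $\psi$ factors uniquely as $\psi = \hat\varphi \circ \pi_E$ with $\hat\varphi : F_X/I_E \to B$, and $\hat\varphi$ is an averaging homomorphism because $\psi$ is one and $\pi_E$ is a surjective averaging homomorphism; moreover $\hat\varphi \circ i_E = \psi \circ i_X = \eta$. For uniqueness, any averaging homomorphism $\hat\varphi'$ with $\hat\varphi' \circ i_E = \eta$ makes $\hat\varphi' \circ \pi_E$ an averaging homomorphism extending $\eta$, so $\hat\varphi' \circ \pi_E = \psi$ by the uniqueness in Theorem~\ref{freeOnX}, and surjectivity of $\pi_E$ forces $\hat\varphi' = \hat\varphi$. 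I expect the genuine obstacle to be the well-definedness step, since confirming $I_E$ is an averaging ideal is exactly where the naive ordinary-ideal construction fails; the rest is a routine consequence of freeness and the quotient universal property.
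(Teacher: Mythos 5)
The paper states this theorem explicitly without proof, so there is no argument of the author's to compare yours against; your proposal supplies the missing argument and it is correct. The three-step structure (well-definedness of $\bar{f_X}$, verification that the quotient satisfies $E$ by lifting assignments along the surjection $\pi_E$, and the universal property via factoring the unique averaging homomorphism $\psi : (F_X,f_X)\to(B,g)$ through $I_E\subseteq\ker\psi$) is exactly what the statement needs, and each step checks out: $\ker\psi$ is an averaging ideal because $\psi$ intertwines $f_X$ with $g$, so it absorbs the averaging ideal generated by the substitution instances, and surjectivity of $\pi_E$ gives uniqueness of the induced map. Your one substantive observation --- that $I(\phi_i)$ must be read as the \emph{averaging} ideal generated by the elements $\phi_i(\sigma(t_1),\ldots,\sigma(t_m),f_X)$, since the ordinary ideal they generate need not be stable under $f_X$ --- is well taken and is consistent with the paper's phrasing ``ideal of $(F_X,f_X)$'' as opposed to ``ideal of $F_X$''; making that reading explicit is precisely what makes $\bar{f_X}$ well defined and the rest of the argument formal.
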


\newpage
\section{Properties of free averaging algebras}
\subsection{Induced homomorphisms between free averaging algebras}
Let  $A$ and $B$ be  $R$-algebras, $(F_A,f_A)$ and $(F_B,f_B)$ be
the free averaging $R$-algebras on $A$ and $B$ respectively.  If
 $\theta : A \rightarrow B$ is an $R$-algebra homomorphism,   then
by the definition of the free averaging algebra on an algebra,
there exists   a unique averaging homomorphism
\[ \hat {\theta} : (F_{A}, f_{A}) \rightarrow (F_{B}, f_{B}),  \]
and the following diagram commutes:

\[\begin{array}{ccc}
(F_A,f_A) & \ola{\hat{\theta}} & (F_B,f_B)\\  \uap{i_A} &&
\uap{i_B}\\ A & \ola{\theta} & B.
\end{array} \]

We say that $\hat{\theta}$ is induced by $\theta$.

 The induced
homomorphism $\hat{\theta}$ can be described nicely in terms of
$\theta$, as the lemma below indicates.

\begin{lemma}
 For $a, a_{1}, \ldots , a_{k} \in A$, if
 $ \theta(a) = b, \theta(a_{i}) = b_{i}, 1 \leq i \leq k$,
then
\[\hat{\theta}(a \otimes (a_{1} \odot a_{2} \ldots \odot a_{k} ))= b \otimes (b_{1} \odot b_{2} \ldots \odot
b_{k})\] \label{lemtheta}
\end{lemma}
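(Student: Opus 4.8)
The plan is to avoid the explicit case-by-case formula for $\hat\theta$ and instead exploit only its three structural properties: that it is a homomorphism of $R$-algebras, that it is an averaging homomorphism (so $\hat\theta\circ f_A = f_B\circ\hat\theta$), and that it commutes with the structure maps, $\hat\theta\circ i_A = i_B\circ\theta$. The last of these is exactly the defining condition of $\hat\theta$, obtained by applying the universal property of Theorem \ref{thm1} to the target averaging algebra $(F_B,f_B)$ together with the $R$-algebra homomorphism $i_B\circ\theta : A\to F_B$.

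First I would rewrite the target generator as a product of elements that are visibly in the image of $i_A$ and of $f_A$. Using $f_A(i_A(a_i)) = f_A(a_i\otimes 1_R) = 1_A\otimes a_i$ together with the multiplication rule $(c\otimes s)(c'\otimes s') = (cc')\otimes(ss')$ on $F_A$, one verifies the identity
\[
a\otimes(a_1\odot a_2\odot\cdots\odot a_k) = i_A(a)\,\prod_{i=1}^{k} f_A(i_A(a_i)),
\]
since $(a\otimes 1_R)(1_A\otimes a_1)\cdots(1_A\otimes a_k)$ collapses under concatenation to $a\otimes(a_1\odot\cdots\odot a_k)$.

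Next I would apply $\hat\theta$ to both sides. Because $\hat\theta$ is an algebra homomorphism it distributes over the product, and because it is an averaging homomorphism each factor satisfies $\hat\theta(f_A(i_A(a_i))) = f_B(\hat\theta(i_A(a_i)))$. Invoking $\hat\theta\circ i_A = i_B\circ\theta$ then replaces $\hat\theta(i_A(a))$ by $i_B(b) = b\otimes 1_R$ and each $\hat\theta(i_A(a_i))$ by $i_B(b_i) = b_i\otimes 1_R$, so that $f_B(\hat\theta(i_A(a_i))) = f_B(b_i\otimes 1_R) = 1_B\otimes b_i$. The right-hand side thus becomes $(b\otimes 1_R)\prod_{i=1}^{k}(1_B\otimes b_i)$ inside $F_B$.

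Finally I would multiply this product back out using the same concatenation rule in $F_B$, which yields $b\otimes(b_1\odot\cdots\odot b_k)$ and completes the proof. I do not expect a genuine obstacle; the only step requiring care is the bookkeeping in the decomposition $a\otimes(a_1\odot\cdots\odot a_k) = i_A(a)\prod_i f_A(i_A(a_i))$, after which every subsequent equality is forced by the homomorphism and averaging properties of $\hat\theta$. The degenerate case $k=0$, where the symmetric factor lies in $S^0(A)=R$, reduces at once to $\hat\theta(a\otimes 1_R)=i_B(\theta(a))=b\otimes 1_R$.
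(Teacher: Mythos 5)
Your proof is correct and is essentially identical to the paper's: the paper likewise rewrites $a\otimes(a_1\odot\cdots\odot a_k)$ as $(a\otimes 1_R)\,f_A(a_1\otimes 1_R)\cdots f_A(a_k\otimes 1_R)$ and then pushes $\hat\theta$ through using the multiplicativity, the averaging property, and $\hat\theta\circ i_A=i_B\circ\theta$. No differences worth noting.
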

\begin{proof}
\begin{eqnarray*}
&& \hat{\theta}(a \otimes (a_{1} \odot a_{2} \ldots \odot a_{k} ))
\\ && = \hat{\theta}((a \otimes 1_{R}) f_{A}(a_{1} \otimes 1_{R})
\ldots f_{A}(a_{k} \otimes 1_{R}))
\\ && = \hat{\theta}(a \otimes 1_{R}) f_{B}(\hat{\theta}(a_{1}
\otimes 1_{R})) \ldots  f_{B}(\hat{\theta}(a_{k} \otimes 1_{R}))
\\ && = \hat{\theta} \circ i_{A}(a) f_{B}(\hat{\theta} \circ
i_{A}(a_{1})) \ldots f_{B}(\hat{\theta} \circ i_{A}(a_{k}))
\\ && = i_{B} \circ \theta(a) f_{B}(i_{B} \circ \theta
(a_{1})) \ldots f_{B}(i_{B} \circ \theta(a_{k}))
\\ && = (b \otimes 1_{R})(1_{R} \otimes b_{1}) \ldots (1_{R}
\otimes b_{k})
\\ && = b \otimes (b_{1} \odot b_{2} \ldots \odot b_{k}).
\end{eqnarray*}
So we have proved the lemma. \wbox
\end{proof}

Clearly,  if  $\theta$ is an $R$-algebra isomorphism, then
$\hat{\theta}$ is an averaging isomorphism of averaging algebras.
If
 $\eta : B \rightarrow C $ is another $R$-algebra homomorphism,
 $\hat {\eta} : (F_{B}, f_{B}) \rightarrow (F_{C}, f_{C})$ is the
 averaging homomorphism induced by $\eta$, then
 $\hat{\theta} \circ \hat{\eta} : (F_{A}, f_{A}) \rightarrow (F_{C},f_{C})$
 is the averaging homomorphism induced by  $\theta \circ \eta$.
 We also have the following theorem.

\begin{theorem}

With the notations above, the following are true .

(i) $\theta$ is injective if and only if $\hat{\theta}$ is
injective.

(ii)  $\theta$ is surjective if and only if $\hat{\theta}$ is
surjective.

(iii) If $A$ is a subalgebra of $B$, and $\theta$ is the inclusion
homomorphism, then $(F_{A}, f_{A})$ is an averaging subalgebra of
 $(F_{B}, f_{B})$,  $\hat{\theta}$ is the inclusion
 homomorphism, and if $A \neq B$ then $F_{A} \neq F_{B}$.
\label{thm4}
\end{theorem}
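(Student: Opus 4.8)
My plan hinges on Lemma~\ref{lemtheta}, which I would use to show that $\hat{\theta}$ respects the decomposition of a free averaging algebra by symmetric degree. Writing $F_A = A \otimes S(A) = \bigoplus_{n \geq 0}(A \otimes S^n(A))$ and likewise for $B$, the lemma says precisely that $\hat{\theta}$ carries $A \otimes S^n(A)$ into $B \otimes S^n(B)$ via $\theta \otimes S^n(\theta)$, where $S^n(\theta)(a_1 \odot \cdots \odot a_n) = \theta(a_1) \odot \cdots \odot \theta(a_n)$. Thus $\hat{\theta} = \bigoplus_{n \ge 0}\big(\theta \otimes S^n(\theta)\big)$ is a graded map, so injectivity and surjectivity of $\hat{\theta}$ can be tested one symmetric degree at a time. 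The degree-$0$ component is just $\theta$ itself under $A \otimes R \cong A$ and $B \otimes R \cong B$, which matches the commuting square $\hat{\theta} \circ i_A = i_B \circ \theta$, in which $i_A$ and $i_B$ are the injective inclusions of the degree-$0$ summands.

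For the easy directions I would argue as follows. Since $i_A, i_B$ are injective, if $\hat{\theta}$ is injective then $i_B \circ \theta = \hat{\theta} \circ i_A$ is injective, forcing $\theta$ injective; this settles the ($\Leftarrow$) half of (i). For (ii), if $\theta$ is surjective then each $S^n(\theta)$ is surjective (the products $b_1 \odot \cdots \odot b_n$ generate $S^n(B)$ and each factor lifts), and a tensor product of two surjective $R$-linear maps is surjective, so every graded piece $\theta \otimes S^n(\theta)$ is onto and hence $\hat{\theta}$ is onto; conversely, reading off the degree-$0$ component of a surjective $\hat{\theta}$ recovers surjectivity of $\theta$.

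The substantive point is the forward half of (i): $\theta$ injective $\Rightarrow \hat{\theta}$ injective. By the graded decomposition it suffices to show each $\theta \otimes S^n(\theta)$ is injective, and the crux is the injectivity of $S^n(\theta)$. Here I expect the real difficulty: symmetric powers do not preserve arbitrary module injections over a general commutative ring $R$ (for instance the module inclusion $\ZZ/2 \hookrightarrow \ZZ/4$ fails after $S^2$), so the argument must genuinely exploit that $\theta$ is a \emph{unital algebra} embedding rather than a mere module map. When $R$ is a field (or, more generally, when $A$ and $B$ are flat or free as $R$-modules) every relevant map is split or flat and both $S^n$ and $-\otimes_R -$ preserve injections, making the claim immediate; in the general case I would either impose such a hypothesis or push the argument through a compatible choice of generators, and it is precisely this pathology of $S^n$ that I flag as the main obstacle to a fully general proof.

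Finally, I would derive (iii) from (i) and (ii) together with Lemma~\ref{lemtheta}. Taking $\theta$ to be the inclusion $A \hookrightarrow B$, part (i) makes $\hat{\theta}$ injective; since $\hat{\theta}$ is an averaging homomorphism its image is a subalgebra with $f_B(\hat{\theta}(F_A)) = \hat{\theta}(f_A(F_A)) \subseteq \hat{\theta}(F_A)$, i.e.\ an averaging subalgebra isomorphic to $(F_A, f_A)$. By Lemma~\ref{lemtheta} with $\theta(a)=a$, the map $\hat{\theta}$ sends $a \otimes (a_1 \odot \cdots \odot a_k)$ to itself, so it is literally the inclusion. For properness, if $A \neq B$ I would pick $b \in B \setminus A$; then $b \otimes 1_R$ lies in the degree-$0$ summand $B \otimes S^0(B) \cong B$, whereas the degree-$0$ part of $\hat{\theta}(F_A)$ is $\theta(A) \cong A$, so $b \otimes 1_R \notin \hat{\theta}(F_A)$ and hence $F_A \neq F_B$.
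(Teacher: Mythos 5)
Your decomposition $\hat\theta=\bigoplus_{n\ge 0}(\theta\otimes S^n(\theta))$ is exactly what Lemma~\ref{lemtheta} encodes, and your handling of the easy directions (both halves of (ii), and $\hat\theta$ injective $\Rightarrow\theta$ injective) is correct and matches the paper in substance. The paper organizes the argument differently: it factors $\theta$ as $\theta_2\circ\theta_1$ through the image $C=\theta(A)$, notes that $\hat\theta_1$ is surjective, and then asserts, citing Lemma~\ref{lemtheta}, that $\hat\theta_2:(F_C,f_C)\to(F_B,f_B)$ is ``the inclusion homomorphism.'' That assertion is precisely the point you flag as the crux, and the paper gives it no more justification than you do: Lemma~\ref{lemtheta} is only a formula on decomposable tensors and does not by itself show that distinct elements of $F_C=C\otimes S(C)$ remain distinct in $F_B=B\otimes S(B)$. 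So your proof is as complete as the paper's, and your diagnosis of where both arguments stall is accurate.

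Moreover, your worry is not hypothetical. Take $R=\ZZ/4\ZZ$, let $B=R[x]/(x^2-2)$ (free over $R$ with basis $1,x$), and let $A=R\cdot 1+R\cdot 2x$, a unital $R$-subalgebra since $(2x)^2=4x^2=0$. As an $R$-module $A\cong R\oplus R/2R$, the second summand generated by $w=2x$; hence $w\odot w$ generates $S^2(R/2R)\cong R/2R\ne 0$ inside $S^2(A)$, and $1_A\otimes(w\odot w)$ is a nonzero element of $F_A$. By Lemma~\ref{lemtheta} its image in $F_B$ is $1_B\otimes\bigl((2x)\odot(2x)\bigr)=1_B\otimes 4(x\odot x)=0$. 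So for a general base ring an injective $\theta$ need not induce an injective $\hat\theta$, and the forward half of (i), together with the first two claims of (iii), requires an additional hypothesis --- $R$ a field, or $A$ and $B$ flat (e.g.\ free) as $R$-modules --- under which your graded argument, and the paper's factorization argument, both go through.
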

\begin{proof}
We have the following commutative diagram:

\[\begin{array}{ccccc}
(F_A,f_A) & \ola{\hat{\theta}_1} & (F_C,f_C) &\ola{\hat{\theta}_2}
&(F_B,f_B) \\ \uap{i_A} && \uap{i_C} && \uap{i_B}\\ A&
\ola{\theta_1} & C & \ola{\theta_2} &B
\end{array} \]
where $C = \theta(A)$, $\theta(a) = \theta_{1}(a)$ for all
  $a \in A$, $\theta_{2}$ is the inclusion map. Clearly
$\hat {\theta}_{1}$ is surjective, and $\hat {\theta}_{2}$ is the
inclusion map according to   Lemma {\ref{lemtheta}}. Since
$\theta_{2} \circ \theta_{1} = \theta $ and the diagram is
commutative, we have
 $\hat{\theta}_{2} \circ \hat{\theta}_{1} = \hat {\theta}$.

(i) if $\theta$ is injective, then $\theta_{1}$ is an isomorphism,
and $\hat{\theta}_{1}$ is an isomorphism, hence
 $\hat{\theta} = \hat{\theta}_{2} \circ \hat{\theta}_{1}$ is
 injective. Conversely, if $\hat{\theta}$ is injective, then
 $\hat{\theta}_{1}$ is injective, hence
 $i_{C} \circ \theta_{1} = \hat{\theta}_{1} \circ i_{A} $ is
 injective, therefore $\theta_{1}$ is injective, and finally
 $\theta = \theta_{2} \circ \theta_{1}$ is injective.

(ii)  if $\theta$ is surjective, then $C = B$, $F_{C} = F_{B}$,
 both $\theta_{2}$ and $\hat{\theta}_{2}$ are identity maps, hence
  $\hat{\theta} = \hat{\theta}_{2} \circ \hat{\theta}_{1}$ is
  surjective. Conversely, if $\hat{\theta}$ is surjective, then
 $\hat{\theta_{2}}$ is surjective, i.e.  $\hat{\theta_{2}}$ is the
 identity map, hence  $\theta_{2}$ is the identity map, and
  $\theta = \theta_{2} \circ \theta_{1}$ is surjective.

(iii)  It is clear that $(F_{A}, f_{A})$ is an averaging
subalgebra of
 $(F_{B}, f_{B})$,  $\hat{\theta}$ is the inclusion
 homomorphism  according to  Lemma {\ref{lemtheta}},  and
  if $A \neq B$ then $\theta$ is not surjective, hence
   $\hat{\theta}$ is not surjective, and
     $F_{A} \neq F_{B}$.  \wbox
\end{proof}
\begin{coro}
Let $\hat{\theta}: (F_{A},f_{A}) \rightarrow (F_{B},f_{B})$ be the
averaging homomorphism induced by the $R$-algebra homomorphism
$\theta : A \rightarrow B$. Then $\hat{\theta}$ is an averaging
isomorphism if and only if $\theta$ is an isomorphism. \wbox
\end{coro}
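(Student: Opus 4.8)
The plan is to deduce this directly from Theorem~\ref{thm4}, since the corollary merely repackages its parts (i) and (ii). First I would recall that $\hat{\theta}$ is, by its very construction, already an averaging homomorphism, so the only content carried by the phrase \emph{averaging isomorphism} is bijectivity (together with the requirement that the inverse again intertwine the two operators). Thus the whole statement reduces to showing that $\hat{\theta}$ is bijective precisely when $\theta$ is bijective.

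Next I would observe that an averaging homomorphism is bijective if and only if it is both injective and surjective, and then simply invoke Theorem~\ref{thm4}: part (i) gives that $\hat{\theta}$ is injective iff $\theta$ is injective, and part (ii) gives that $\hat{\theta}$ is surjective iff $\theta$ is surjective. Combining the two equivalences, $\hat{\theta}$ is bijective iff $\theta$ is simultaneously injective and surjective, that is, iff $\theta$ is an $R$-algebra isomorphism. This already delivers both directions of the corollary.

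The one remaining point — and the only place where a short argument is needed beyond citing Theorem~\ref{thm4} — is to confirm that a bijective averaging homomorphism really is an averaging \emph{iso}morphism, i.e. that its set-theoretic inverse is again an averaging homomorphism. For this I would note that the inverse of a bijective $R$-algebra homomorphism is automatically an $R$-algebra homomorphism, and that applying $\hat{\theta}^{-1}$ to both sides of the intertwining relation $\hat{\theta} \circ f_{A} = f_{B} \circ \hat{\theta}$ yields $f_{A} \circ \hat{\theta}^{-1} = \hat{\theta}^{-1} \circ f_{B}$, so $\hat{\theta}^{-1}$ intertwines $f_{B}$ and $f_{A}$ as required.

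I expect no genuine obstacle: the substantive work was already done in Theorem~\ref{thm4}, and this corollary is essentially a formal consequence of its two halves. If anything, the only care needed is bookkeeping — making sure each of the two ``if and only if'' directions is correctly matched to part (i) or part (ii) — rather than any new construction or estimate.
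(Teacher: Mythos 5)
Your proposal is correct and follows exactly the route the paper intends: the corollary is stated with no written proof precisely because it is the conjunction of parts (i) and (ii) of Theorem~\ref{thm4} (the paper also notes just before that theorem that an isomorphism $\theta$ induces an averaging isomorphism $\hat{\theta}$). Your extra remark that the inverse of a bijective averaging homomorphism again intertwines the operators is a worthwhile bit of care that the paper leaves implicit.
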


\subsection{Ascending chain conditions in  free averaging algebras}
In this subsection, we assume that $R$ is a noetherian ring. An
averaging $R$-algebra $(A,f)$ is said to be a {\em noetherian
averaging $R$-algebra } if there exist no infinite ascending chain
of averaging  ideals in $(A,f)$
\[ I_{1} \subset I_{2} \subset \ldots \subset I_{n} \subset \ldots
\]

 Note that  if $A$ is a noetherian
$R$-algebra, then $(A,f)$ is a noetherian averaging $R$-algebra,
since every averaging ideal of $(A,f)$ is an ideal of $A$.

\begin{theorem} The free averaging $R$-algebra $(F_{X}, f_{X})$ on
a set $X$ is a noetherian averaging $R$-algebra if and only if
  $ X = \emptyset $.
\label{thm5}
\end{theorem}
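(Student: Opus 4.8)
The plan is to prove the two implications separately. For the direction $X=\emptyset \Rightarrow$ noetherian, recall that in this case $F_\emptyset = R[y_{1_R}]$ is a polynomial ring in a single variable over the noetherian ring $R$. By the Hilbert basis theorem $R[y_{1_R}]$ is a noetherian ring, so it satisfies the ascending chain condition on all of its ideals, and in particular on its averaging ideals. Invoking the observation recorded just before the statement (a noetherian $R$-algebra is automatically a noetherian averaging $R$-algebra, since every averaging ideal is an ideal), we conclude that $(F_\emptyset, f_\emptyset)$ is a noetherian averaging algebra. This direction is routine.

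The substance lies in the contrapositive of the other direction: assuming $X\neq\emptyset$, I would exhibit an explicit infinite strictly ascending chain of averaging ideals. Fix an element $x\in X$. The powers $x, x^2, x^3,\ldots$ are pairwise distinct members of $\Theta(X)$, so the variables $y_x, y_{x^2}, y_{x^3},\ldots \in Y$ are pairwise distinct. For each $n\geq 1$ let $J_n$ be the \emph{ordinary} ideal of $F_X = R[X\cup Y]$ generated by $y_x, y_{x^2},\ldots, y_{x^n}$. I claim that each $J_n$ is in fact already an averaging ideal and that $J_1\subsetneq J_2 \subsetneq \cdots$.

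The key computation is that $J_n$ is closed under $f_X$. Because $f_X$ is $R$-linear, it suffices to evaluate it on a monomial of the form $m\,y_{x^k}$ with $1\leq k\leq n$ and $m$ an arbitrary monomial of $F_X$; writing $m = u w$ with $u\in\Theta(X)$ its $X$-part and $w\in R[Y]$ its $Y$-part, we obtain $f_X(u\,(w y_{x^k})) = y_u\, w\, y_{x^k}$, which is again a multiple of $y_{x^k}$ and hence lies in $J_n$. The structural feature being exploited is that $f_X(uv)=y_u v$ leaves the $Y$-part $v$ untouched, so it can never destroy the factor $y_{x^k}$ that puts an element into $J_n$. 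Strictness of the chain is then immediate from the polynomial structure: $y_{x^{n+1}}$ is a distinct variable and therefore does not lie in the ideal generated by $y_x,\ldots,y_{x^n}$, so $y_{x^{n+1}}\in J_{n+1}\setminus J_n$. This produces the required infinite ascending chain and shows $(F_X, f_X)$ is not noetherian.

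The main obstacle, and essentially the only nontrivial point, is verifying that the $J_n$ are genuine averaging ideals rather than merely ordinary ideals; everything hinges on the observation that $f_X$ preserves the $Y$-component of each monomial. Once that is in hand, the distinctness of the variables $y_{x^n}$ renders both the strict inclusions and the whole argument transparent. I would also remark that the choice of generators is essential: generating $J_n$ by the $X$-monomials $x, x^2,\ldots$ instead would fail, since the single ordinary ideal $(x)$ already contains every higher power $x^m$; it is precisely the algebraic independence of the $y$-variables that makes the construction work.
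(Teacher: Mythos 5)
Your proof is correct and follows essentially the same strategy as the paper's: the direction $X=\emptyset$ is handled by the same routine observation, and for $X\neq\emptyset$ both arguments exhibit an infinite strictly ascending chain of averaging ideals of $F_X$ generated by subsets of $R[Y]$, exploiting the infinitude of $Y$. The only (minor) difference is that the paper pushes forward an arbitrary infinite ascending chain of ideals of $R[Y]$ and detects strictness via the retraction $\phi:F_X\to R[Y]$ with $\phi(x)=1$, $\phi(y)=y$, whereas you work with the explicit monomial chain $(y_x)\subsetneq (y_x,y_{x^2})\subsetneq\cdots$ and verify closure under $f_X$ and strictness directly --- a concrete instance of the same construction.
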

\begin{proof}
If $X = \emptyset$, then $Y = \{y_{0}\}$, and
 $F_{\emptyset} = R[y_{0}]$, which is a noetherian $R$-algebra,
 hence  $(F_{\emptyset}, f_{\emptyset})$ is a noetherian averaging
 $R$-algebra.
If $X \neq \emptyset$ then $Y$ is an infinite set.  There exists
an infinite ascending chain of ideals in $R[Y]$:
\[ I_{1} \subset I_{2} \subset \ldots \subset I_{n} \subset \ldots
\]
For each $i$, $ 0 <  i  < \infty$,  the averaging ideal of
$(F_{X}, f_{X})$ generated by $I_{i}$ is
\[ \bar{I}_{i} = \{ \sum_{k} \alpha_{k} \beta_{k} : \alpha_{k} \in
F_{X}, \beta_{k} \in I_{i} \}. \]
 These averaging ideals $\bar{I}_{i}$ of $(F_{X}, f_{X})$ are all
 distinct, since if we define an $R$-algebra homomorphism
 \[ \phi : F_{X} \rightarrow R[Y] \]
 \[ \phi(x) = 1, \;\;\; \phi(y) = y \]
 for all $x \in X$ and $ y \in Y$, then
 $\phi(\bar{I}_{i})=I_{i}$. Hence we have an infinite ascending
 chain of averaging ideals in $(F_{X}, f_{X})$
 \[ \bar{I}_{1} \subset \bar{I}_{2} \subset \ldots \subset \bar{I}_{n} \subset \ldots
\]
So we have proved the theorem. \wbox
\end{proof}

Recall that $(F_{X}, f_{X})$ is a free averaging $R$-algebra on
the $R$-algebra $R[X]$. When $X$ contains only one element, $R[X]$
is a noetherian $R$-algebra, but $(F_{X}, f_{X})$ is not a
noetherian averaging $R$-algebra. Therefore  $A$ is a noetherian
$R$-algebra does not implies that $(F_{A}, f_{A})$ is a noetherian
averaging $R$-algebra. But we do have the following.

\begin{theorem}
Let $A$ be an $R$-algebra, and $(F_{A}, f_{A})$ be the free
averaging $R$-algebra on $A$. If $(F_{A}, f_{A})$  is a noetherian
averaging $R$-algebra, then $A$ is a noetherian $R$-algebra.
\label{thm6}
\end{theorem}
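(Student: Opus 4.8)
The plan is to prove the contrapositive: assuming $A$ is not a noetherian $R$-algebra, I would produce an infinite strictly ascending chain of averaging ideals in $(F_A,f_A)$. Fix an infinite strictly ascending chain of ideals $J_1 \subsetneq J_2 \subsetneq \cdots$ of $A$. To each ideal $J$ of $A$ I associate the averaging ideal $\bar{J}$ of $(F_A,f_A)$ generated by $i_A(J) = J \otimes 1_R$. The assignment $J \mapsto \bar{J}$ is manifestly order preserving, so the whole argument reduces to showing that one can recover $J$ from $\bar{J}$, and hence that the chain $\bar{J}_n$ is strict.

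The heart of the matter is the identity $\{a \in A : a \otimes 1_R \in \bar{J}\} = J$. The inclusion $\supseteq$ is immediate. For $\subseteq$ I would first compute $\bar{J}$ explicitly. Let $K$ be the ideal of the symmetric algebra $S(A)$ generated by the image of $J$ under $A = S^1(A) \hookrightarrow S(A)$. I claim
\[ \bar{J} = (J \otimes S(A)) + (A \otimes K). \]
One checks directly that the right-hand side is an ideal of $F_A = A \otimes S(A)$ (using that $J$ is an ideal of $A$ and $K$ an ideal of $S(A)$) and that it is stable under $f_A$ (using $f_A(a \otimes s) = 1_A \otimes as$ together with $jt \in K$ and $ak \in K$ for $j \in J$, $k \in K$, $t \in S(A)$, $a \in A$); conversely it must lie inside any averaging ideal containing $J \otimes 1_R$, since $J \otimes S(A)$ is the plain ideal generated by $J \otimes 1_R$ and $A \otimes K$ is the ideal generated by $f_A(J \otimes S(A))$.

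With this description the conclusion follows from the grading $S(A) = \bigoplus_{n \geq 0} S^n(A)$, which induces $F_A = \bigoplus_{n \geq 0} A \otimes S^n(A)$. The element $a \otimes 1_R$ lies in the degree-zero summand $A \otimes S^0(A)$. Since $K$ is generated by elements of $S^1(A)$, it has no degree-zero part, so $A \otimes K \subseteq \bigoplus_{n \geq 1} A \otimes S^n(A)$; hence the degree-zero component of $\bar{J}$ is exactly $J \otimes S^0(A) = J \otimes R$. Projecting $a \otimes 1_R \in \bar{J}$ onto degree zero therefore forces $a \in J$. Consequently, if $a \in J_{n+1} \setminus J_n$, then $a \otimes 1_R \in \bar{J}_{n+1}$ but $a \otimes 1_R \notin \bar{J}_n$, so $\bar{J}_1 \subsetneq \bar{J}_2 \subsetneq \cdots$ is a genuine infinite ascending chain of averaging ideals, contradicting the hypothesis that $(F_A,f_A)$ is noetherian.

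I expect the main obstacle to be establishing the explicit formula $\bar{J} = (J \otimes S(A)) + (A \otimes K)$, or at least pinning down enough of $\bar{J}$ to read off its degree-zero part: one must verify both that the proposed set is \emph{already} an averaging ideal (so the generated averaging ideal is no larger) and that it is forced to lie inside $\bar{J}$. Everything else — order-preservation and the degree-zero extraction — is routine once this structural description is in hand.
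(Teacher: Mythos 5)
Your argument is correct, but it goes in the opposite direction from the paper's. The paper's proof is a pullback: it observes that the identity map $j_A$ is itself an averaging operator on $A$, so every ideal $I_i$ of $A$ is an averaging ideal of $(A,j_A)$; the universal property gives a (surjective) averaging homomorphism $\hat\varphi:(F_A,f_A)\to(A,j_A)$ with $\hat\varphi\circ i_A=j_A$, and the preimages $\hat\varphi^{-1}(I_i)$ form a strictly ascending chain of averaging ideals of $(F_A,f_A)$. That is a three-line argument which never touches the internal structure of $F_A$. You instead push forward: you form the averaging ideal $\bar J$ generated by $i_A(J)$ and must then prove that $J\mapsto\bar J$ is injective on ideals, which forces you to compute $\bar J=(J\otimes S(A))+(A\otimes K)$ explicitly and to invoke the grading of $S(A)$ to read off the degree-zero part. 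Your verifications all check out: the proposed set is an ideal closed under $f_A$ (since $js\in K$ and $ak\in K$), it is contained in any averaging ideal containing $J\otimes 1_R$, and since $K$ is generated in degree $1$ the degree-zero component of $\bar J$ is exactly the image of $J\otimes R$, so $a\otimes 1_R\in\bar J$ forces $a\in J$. The trade-off is clear: the paper's route is shorter and works at the level of the universal property alone, while yours is heavier but yields as a by-product an explicit description of the averaging ideal of $(F_A,f_A)$ generated by an ideal of $A$, which the paper's argument does not provide. One cosmetic remark: you announce a proof of the contrapositive but close by "contradicting the hypothesis"; either framing works, but pick one.
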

\begin{proof}
If there exists an infinite ascending chain of ideals in $A$
\[ I_{1} \subset I_{2} \subset \ldots \subset I_{n} \subset \ldots
\]
Each $I_{i}$ is an averaging ideal of the averaging $R$-algebra
$(A, j_{A})$, where $j_{A}$ is the identity map of $A$. There
exists a unique averaging homomorphism
\[ \hat{\varphi} : (F_{A}, f_{A}) \rightarrow (A, j_{A}) \]
such that $j_{A} = \hat{\varphi} \circ i_{A}$. For each
 $i$, $ 0 <  i  < \infty$, let
   $\bar{I}_{i}=\hat{\varphi}^{-1}(I_{i})$, and $\bar{I}_{i}$ is an
   averaging ideal of $(F_{A}, f_{A})$. Clearly
   $\bar{I}_{i} \neq \bar{I}_{j}$ for $i \neq j$.   Hence we have an infinite
   ascending chain of averaging ideals in $(F_{A}, f_{A})$
\[ \bar{I}_{1} \subset \bar{I}_{2} \subset \ldots \subset \bar{I}_{n} \subset \ldots
\]
So we have proved the theorem.\wbox
\end{proof}

\subsection{Decision problems of averaging algebras}
We will give an application of our construction of free
averaging algebras on decision problems.
For notations and more information decision problems,
we refer the readers to {\cite{Bach}}.

We may wonder if a unitary averaging algebra is a
Reynolds algebra, or if a Reynolds operator $f$ on  $A$ must
satisfy
\[nf(xf(x)^{n-1}) = (n-1)f(f(x)^{n}) + f(x)^{n} .\]
This kind of problems can be formulated as the following decision
problems in equational logic: for two finite sets $E_{1}$ and
$E_{2}$ of equational axioms , decide if each equation of $E_{2}$
is a logical consequence of $E_{1}$. The most important well-known
method is to use the Knuth-Bendix completion procedure and
transform the set $E_1$  into a convergent rewriting system.
Unfortunately, this approach does not always work. In this
section, We will consider some decision problems related to
averaging algebras.  More precisely, we will consider the situations
where $E_{1}$ is one of the following:
\begin{eqnarray*}
&& (i)\ \  \{f(xf(y)) = f(x)f(y) \}
\\ && (ii)\ \   \{ f(xf(y)=f(x)f(y),\ f(x) = 1 \}
\\ && (iii)\ \  \{f(xf(y)) = f(x)f(y),\  f(f(x)f(y)) =f(x)f(y))
\}.
\end{eqnarray*}

We give the definitions first.
 Let $V$ be a countable set of variable symbols, $\bf {1}$ and $\bf {0}$ be
 two constant symbols, and $\bf {f}$ be a function symbol of arity 1.

 \begin{defn}
 $R$-algebraic terms are defined inductively as follows.

 (i) Every $v \in V$ is an $R$-algebraic term.

 (ii) both $\bf {1}$ and $\bf {0}$
 are  $R$-algebraic terms.

 (iii) If $t_{1}$ and $t_{2}$ are $R$-algebraic terms,
 then $(t_{1} + t_{2})$, $(t_{1} - t_{2})$, $(t_{1}t_{2})$ are
 $R$-algebraic terms.

 (iv) If $t$ is an $R$-algebraic term, $r \in R$, then
 $(rt)$ and ${\bf {f}}(t)$ are $R$-algebraic terms.

 (v) Only these are $R$-algebraic terms.

\end{defn}

\begin{defn}
An $R$-algebraic equation is a pair of $R$-algebraic terms
 $(t_{1}, t_{2})$, also  written  as    $t_{1} = t_{2}$.
\end{defn}

\begin{defn}
Let $f$ be an $R$-endomorphism of an $R$-algebra $A$. We say that
$f$ satisfies the  $R$-algebraic equation
\[ \Phi_{1}(v_{1}, v_{2}, \ldots , v_{k}, {\bf{1}}, {\bf{0}}, {\bf{f}}) =
 \Phi_{2}(v_{1}, v_{2}, \ldots , v_{k}, {\bf{1}}, {\bf{0}}, {\bf{f}}),\] where
 $\Phi_{i}(v_{1}, v_{2}, \ldots , v_{k}, \bf{1}, \bf{0}, \bf{f})$
 are
  $R$-algebraic terms that involves
 $v_{1}, v_{2},$ $ \ldots , v_{k} \in V$, the two constants symbols
 $\bf{1}$ and $\bf{0}$, and the function symbol $\bf{f}$, if for
 any $a_{1}, a_{2}, \ldots , a_{k} \in A$,
 \[ \Phi_{1}(a_{1}, a_{2}, \ldots , a_{k}, 1_{A}, 0_{A}, f) = \Phi_{2}(a_{1}, a_{2}, \ldots , a_{k}, 1_{A}, 0_{A}, f) \]
 holds in $A$.

 We say that $f$ satisfies a set of $R$-algebraic equations $E$, if
 $f$ satisfies each $R$-algebraic equation of $E$.
\end{defn}

\begin{defn}
Let $E_{1}$ and $E_{2}$ be two sets of $R$-algebraic equations. We
say that $E_{1}$ implies $E_{2}$ ,  if for any $R$-algebra $A$ and
any $R$- endomorphism $f$ on  $A$, whenever $f$ satisfies $E_{1}$ ,
it also satisfies $E_{2}$.
\end{defn}

We write $E_{1} \models E_{2}$ to indicate that $E_{1}$ implies
$E_{2}$.

Let
\begin{eqnarray*}
&& E_{a} = \{ {\bf{f}}(v_{1}{\bf{f}}(v_{2})) =
{\bf{f}}(v_{1}){\bf{f}}(v_{2}) \},
\\ && E_{ua} = \{ {\bf{f}}(v_{1}{\bf{f}}(v_{2})) =
{\bf{f}}(v_{1}){\bf{f}}(v_{2}), \;\; {\bf{f}}({\bf{1}}) = {\bf{1}}
\},
\\ && E_{ra} =\{ {\bf{f}}(v_{1}{\bf{f}}(v_{2})) =
{\bf{f}}(v_{1}){\bf{f}}(v_{2}), \;\;
{\bf{f}}({\bf{f}}(v_{1}){\bf{f}}(v_{2})) =
{\bf{f}}(v_{1}){\bf{f}}(v_{2}) \}.
\end{eqnarray*}

For an $R$-algebra $A$ and an $R$-endomorphism $f$ on  $A$,
  $(A, f)$ is an averaging $R$-algebra if and only if $f$
  satisfies $E_{a}$, it  is a unitary averaging $R$-algebra if and
  only if $f$ satisfies $E_{ua}$,  and it is a Reynolds-averaging
  $R$-algebra if and only if $f$ satisfies $E_{ra}$.

By taking the advantage of the explicit constructions of free
averaging algebras, we have the following theorem.

\begin{theorem}
If $E_{1}$, a set of $R$-algebraic equations,  is one of $E_{a}$,
$E_{ua}$ and $E_{ra}$, and $E_{2}$ is any finite set of
$R$-algebraic equations, then it is decidable whether
 $E_{1} \models E_{2}$  holds or not.
\label{thm7}
\end{theorem}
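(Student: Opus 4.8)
The plan is to reduce the semantic question ``$E_1 \models E_2$'' to a finite, effective computation inside the explicit free objects constructed above. The guiding principle is that a universally quantified identity holds in every $E_1$-algebra precisely when it holds for the generic elements of the free $E_1$-algebra, and our free objects are concrete enough (polynomial algebras, or a quotient with a computable normal form) that equality of two elements can be decided. Since $E_2$ is finite and $E_1 \models E_2$ holds exactly when $E_1$ implies each equation of $E_2$ separately, it suffices to decide a single equation $\Phi_1 = \Phi_2$.

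First I would fix, for a given equation $\Phi_1 = \Phi_2$ of $E_2$, the finite set of variables $v_1, \dots, v_n$ occurring in it and take $X = \{x_1, \dots, x_n\}$. I would then invoke the matching free construction: $(F_X, f_X)$ from Theorem~\ref{freeOnX} when $E_1 = E_a$, and the quotients $(F_X/I_0, \bar{f_0})$ and $(F_X/I_1, \bar{f_1})$ from Proposition~\ref{prop1} when $E_1 = E_{ua}$ or $E_1 = E_{ra}$. The key reduction is that $E_1 \models \{\Phi_1 = \Phi_2\}$ holds if and only if $\Phi_1(x_1,\dots,x_n) = \Phi_2(x_1,\dots,x_n)$ holds in this free object, where $\mathbf{f}$ is interpreted by the free operator and the constants $\mathbf{0},\mathbf{1}$ by $0,1_R$. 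The backward direction uses the universal property: for any $E_1$-algebra $(B,g)$ and elements $b_1,\dots,b_n \in B$, the assignment $x_i \mapsto b_i$ extends to an averaging homomorphism $\hat\varphi$, and since $\hat\varphi$ respects addition, multiplication, scalars and the operator, an induction on term structure gives $\Phi_i(b_1,\dots,b_n) = \hat\varphi(\Phi_i(x_1,\dots,x_n))$; hence equality of the two generic values forces equality of every instance. The forward direction is immediate because the free object is itself an $E_1$-algebra, so any identity forced by $E_1$ holds in particular at its generators.

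Next I would show that the two sides of the reduced equality are effectively computable elements of a finitely generated algebra. Evaluating $\Phi_i(x_1,\dots,x_n)$ proceeds by induction on the structure of the term; the only nontrivial step is an application of $\mathbf{f}$, where one rewrites the current value as a finite sum $\sum_k u_k w_k$ with $u_k \in \Theta(X)$ and $w_k \in R[Y]$ and applies $f_X(\sum_k u_k w_k) = \sum_k y_{u_k} w_k$. Because each term contains only finitely many occurrences of $\mathbf{f}$, only finitely many new generators $y_{u_k}$ are ever introduced, so $\Phi_1(x)$ and $\Phi_2(x)$ both lie in a polynomial ring $R[x_1,\dots,x_n, y_{\theta_1},\dots,y_{\theta_m}]$ in finitely many indeterminates. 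For $E_a$ the free object is exactly this polynomial ring, so deciding $\Phi_1(x) = \Phi_2(x)$ is a comparison of coefficients; for $E_{ua}$ one first sets $y_{1_R} = 1_R$ (passing to $R[Z]$ with $Z = X \cup (Y - \{y_{1_R}\})$) and compares coefficients there.

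The main obstacle is the Reynolds-averaging case $E_{ra}$, where $F_X/I_1$ is no longer a polynomial ring. Here I would rely on the unique normal form recorded after Proposition~\ref{prop1}: using the relation $yy'y_{1_R} = yy'$ for $y,y' \in Y$ one eliminates the redundant factor $y_{1_R}$ from every monomial containing at least two $Y$-generators, and this rewriting terminates and yields a canonical representative. Reducing $\Phi_1(x)$ and $\Phi_2(x)$ to normal form makes their equality again a coefficient comparison. In all three cases the decision thus reduces to finitely many equality tests between elements of $R$ built by ring operations from the (finitely many) scalars appearing in the equations, and is therefore decidable whenever equality in $R$ is decidable. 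I expect the delicate points to be the bookkeeping of the $Y$-generators spawned by nested applications of $\mathbf{f}$, and the verification that the Reynolds normal form is genuinely computable and unique.
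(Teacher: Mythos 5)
Your proposal is correct and follows essentially the same route as the paper: reduce $E_1 \models E_2$ to checking the single identity at the generators of the corresponding free object via the universal property, then decide the resulting equality because the free object is an explicit polynomial algebra (or an explicit quotient thereof). You actually supply more detail than the paper, which treats only the $E_a$ case explicitly and dismisses the evaluation step as routine; your handling of the $E_{ra}$ normal form and the caveat about decidable equality in $R$ fill genuine gaps the paper leaves to the reader.
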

\begin{proof}
Without loss of generality, we assume that $E_{2}$ contains only
one equation in the form
\[ \Phi(v_{1}, v_{2}, \ldots , v_{k}, {\bf{1}}, {\bf{0}}, {\bf{f}})
= {\bf{0}}. \] We will only consider the case where $E_{1}
=E_{a}$, other cases can be treated similarly. Let
  $X = \{x_{1}, x_{2},\ldots, x_{k}\}$, and let  $(F_{X}, f_{X})$ be the free averaging
$R$-algebra on $X$.  $E_{1} \models E_{2}$ holds if and only if
any  averaging $R$-algebra $(A, f)$ satisfies
\begin{equation}
 \Phi(v_{1}, v_{2}, \ldots , v_{k}, {\bf{1}}, {\bf{0}}, {\bf{f}})
= {\bf{0}},
 \label{eq:e1}
\end{equation}
 which, we claim, is equivalent to
that
\begin{equation}
\Phi(x_{1}, x_{2}, \ldots , x_{k}, 1_{R}, 0_{R},  f_{X}) = 0_{R}
\label{eq:e2}
\end{equation}
 holds in $(F_{X}, f_{X})$.
  Equation ({\ref{eq:e2}}) must holds since  Equation
 ({\ref{eq:e1}})
  is supposed to  be satisfied
 by any averaging operator on  ant averaging algebra. Conversely, if Equation
 ({\ref{eq:e2}})  holds, then for any averaging $R$-algebra $(A, f)$
 and any $a_{1}, a_{2}, \ldots, a_{k} \in A$, define a map
 \begin{eqnarray*}
&& \eta : X \rightarrow A
\\ &&  \;\;\;\;\;   x_{i} \rightarrow a_{i}, \;\; 1 \leq  i \leq k.
 \end{eqnarray*}
 Then there exists a unique averaging $R$-algebra homomorphism
 \[ \hat{\eta} : (F_{X}, f_{X}) \rightarrow (A, f) \]
 such that $\hat{\eta} \circ i_{X} = \eta$.
 We have
\begin{eqnarray*}
&& \Phi(a_{1}, a_{2}, \ldots , a_{k}, 1_{A}, 0_{A},  f )
  \\ && =\hat{\eta}(\Phi(x_{1}, x_{2}, \ldots , x_{k}, 1_{R}, 0_{R},  f_{X}))
  \\ && = \hat{\eta}(0_{R}) = 0_{R}.
 \end{eqnarray*}
So in order to prove (or disprove)  that $E_{1} \models E_{2}$
holds, we only need to prove (or disprove) that Equation
({\ref{eq:e2}})  holds, which can be checked routinely, since
$F_{X}$ is a polynomial algebra. \wbox
\end{proof}
\begin{ex}
We claim that every unitary averaging operator is an Reynolds
operator, i.e. $E_1 = E_{ua}$  implies
  $E_2 = \{\bf{f}(\bf{f}(v_{1}){\bf{f}}(v_{2}))=
\bf{f}(v_{1})\bf{f}(v_{2}) \}$.
\end{ex}
\begin{proof}
Let $X = \{x_1, x_2 \}$, $Y = \{y_{\theta}: \theta \in \Theta(X)
\}$, and $Z = X \cup (Y - \{y_{1_R} \})$. Then $(R[Z], f_0)$ is a
free unitary averaging algebra on $X$, where $f_0$ is a unitary
averaging operator defined by

(i) $f_0(1_R) = 1_R$, and

 (ii) for each $1_R \neq u \in \Theta(X)$, $v \in
R[Y-\{y_{1_R}\}]$, $f_0(uv) = y_uv$, and

(iii) for each $v \in R[Y-\{y_{1_R}\}]$, $f_0(v) = v$.

We only need to check that $f_0(f_0(x_1)f_0(x_2)) =
f_0(x_1)f_0(x_2)$. Since $f_0(x_1)f_0(x_2) = y_{x_1}y_{x_2} \in
R[Y - \{y_{1_R}\}]$, we have
\[ f_0(f_0(x_1)f_0(x_2)) = f_0(y_{x_1}y_{x_2}) = y_{x_1}y_{x_2}
=f_0(x_1)f_0(x_2).\] Hence the claim is true, i.e., every unitary
averaging operator is a Reynolds operator.  \wbox
\end{proof}

\begin{ex}
We claim that an averaging operator is not necessarily a Reynolds
operator, i.e. $E_a$ does not implies $E_{ra}$.
\end{ex}
\begin{proof}
Let $(F_X, f_X)$ be the free averaging $R$-algebra on $X = \{x_1,
x_2 \}$. We only need show that
\[ f_X(f_X(x_1)f_X(x_2)) \neq f_X(x_1)f_X(x_2) .\]
Since $f_X(f_X(x_1)f_X(x_2))= y_{1_R}y_{x_1}y_{x_2}$,
  $f_X(x_1)f_X(x_2) = y_{x_1}y_{x_2}$, we know that $(F_X, f_X)$
  does not satisfies $E_{ra}$. Hence the claim is true. \wbox
\end{proof}

\newpage

\section{Lie algebraic structures induced by averaging operators}

\subsection{ Induced Lie bracket operation }

It is well-known that for an algebra $A$ (not necessarily
commutative), we can define a Lie bracket operation on $A$:
\[ [x,y] = xy - yx
.\]
 This operation becomes trivial when $A$ is commutative. The
operation can be rewritten as
 \[ [x,y] = x i_{A}(y) - y i_{A}(x), \]
where $i_{A}$ is the identity map of $A$, which is an averaging
operator on  $A$. This approach can be generalized and  make sense
even in the case where $A$ is commutative by using another
averaging operator on  $A$ instead of $i_{A}$.

\begin{defn}

Let $A$ be an algebra over ring $R$, and $f \in \Avg(A)$. Define
\begin{equation}
[x,y]_{f} = xf(y) - yf(x),\ x,\ y\in A. \label{eq:bax1}
\end{equation}

\end{defn}

\begin{theorem}
When $f$ is an averaging operator on  $A$,
the $R$-module $A$
becomes a Lie algebra under the bracket operation $[  ,  ]_{f}$.
\label{thm8}
\end{theorem}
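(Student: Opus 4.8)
The plan is to verify the three conditions in the definition of a Lie bracket from Section~1.5 --- $R$-bilinearity, anticommutativity, and the Jacobi identity --- using the averaging identity (\ref{eq:lav1}) as the only substantive ingredient.

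First I would dispose of the two easy conditions. Since $f$ is an $R$-module endomorphism and the multiplication of $A$ is $R$-bilinear, the assignment $(x,y)\mapsto xf(y)-yf(x)$ is $R$-bilinear in each argument, so $[\,,\,]_f$ is a bilinear binary operation; this requires no computation. Anticommutativity is equally immediate, since $[x,x]_f = xf(x)-xf(x)=0$.

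The heart of the proof is the Jacobi identity, and the key step I would isolate first is the observation that $f$ annihilates every bracket. Indeed, applying (\ref{eq:lav1}) twice and using that $A$ is commutative,
\[ f([x,y]_f) = f(xf(y)) - f(yf(x)) = f(x)f(y) - f(y)f(x) = 0. \]
This single identity does almost all the work: because $f([x,y]_f)=0$, the nested bracket collapses to
\[ [[x,y]_f, z]_f = [x,y]_f\,f(z) - z\,f([x,y]_f) = (xf(y)-yf(x))f(z). \]

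Finally I would substitute this expression together with its two cyclic permutations into the Jacobi sum $[[x,y]_f,z]_f + [[y,z]_f,x]_f + [[z,x]_f,y]_f$ and expand it into six monomials. Commutativity of $A$ then cancels them in pairs (for instance $xf(y)f(z)$ against $-xf(z)f(y)$), leaving $0$. The only thing that could go wrong is bookkeeping in this last cancellation, so I expect the main obstacle to be organizational rather than conceptual: once the identity $f([x,y]_f)=0$ is in hand, the Jacobi identity is forced by the commutativity of $A$.
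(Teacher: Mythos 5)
Your proposal is correct and follows essentially the same route as the paper: the paper also checks bilinearity and $[x,x]_f=0$ directly, then expands the Jacobi sum into the six product terms plus the three terms $zf(xf(y)-yf(x))$ (and cyclic), which vanish by exactly the identity $f([x,y]_f)=0$ you isolate. Your only difference is organizational --- you state that vanishing as a separate step, whereas the paper absorbs it into a single displayed computation.
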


\begin{proof} Clearly, the bracket operation is bilinear and $[x,x]_{f}
=0$, for all $ x \in A$. We can also verify that
\begin{eqnarray*}
&& [[x,y]_{f}, z]_{f} + [[y,z]_{f}, x]_{f} + [[z,x]_{f}, y]_{f}\\
&& = [xf(y)-yf(x), z]_{f} + [yf(z)-zf(y), x]_{f} + [zf(x)-xf(z),
y]_{f} \\ && = (xf(y)-yf(x))f(z) - zf(xf(y)-yf(x))  \\ && +
(yf(z)-zf(y))f(x) - xf(yf(z)-zf(y))\\ && + (zf(x)-xf(z))f(y) -
yf(zf(x)-xf(z)) \\
 && = 0,
\end{eqnarray*}
that is, $[,]_{f}$ is a Lie bracket operation.\wbox
\end{proof}

We will denote the Lie algebra $(A, [ , ]_{f})$ by $A_{f}$.

\subsection{Conditions under which  a Lie bracket operation
is induced by an averaging operator }

We point out that if we remove the requirement that the R-module
endomorphism $f$ is an averaging operator, $[ , ]_{f}$ may still
be a Lie bracket operation on $A$.

\begin{ex}
Let $A$ be the set of  the complex numbers , $R$  the set of real
numbers. $A$ is a 2-dimensional algebra over $R$. if $z = x + yi
$,  x, y are real numbers,  define
\[ g(z) = yi,  \]
then $g$ is an R-module endomorphism on  $A$. Note that $g$ is not
an averaging operator, but $[ , ]_{g}$ is a Lie bracket operation.
If we define \[ f(z) = g(z) - z = -x  \] then $f$ is an averaging
operator and $ [ ,]_{g} = [ , ]_{f} $.
\end{ex}

\noindent We would like to know under what conditions, a Lie
bracket operation is induced by a $R$-module endomorphism, or even
an averaging operator. We also want to know when two $R$-module
endomorphisms induce the same Lie bracket operation.

\begin{prop}
A Lie bracket operation $[,]$ on an $R$-algebra $A$ is induced by
an $R$-module endomorphism $f$ if and only if
\begin{displaymath} [x,y] = x[1_{A}, y] + y[x, 1_{A}]
\end{displaymath}for all $ x, y \in A$.
\label{prop2}
\end{prop}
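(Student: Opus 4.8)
The plan is to read the phrase ``induced by an $R$-module endomorphism $f$'' literally: it should mean that $[x,y] = xf(y) - yf(x)$ for all $x,y \in A$, exactly as in the definition of $[\,,\,]_f$ but now dropping the averaging hypothesis on $f$ (this is consistent with the preceding remark and Example, where a non-averaging $g$ still induces a bracket). With this reading the statement is a clean biconditional, and I would prove each direction by a short direct computation, the only delicate points being where commutativity of $A$ and anticommutativity of the bracket are invoked.

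For the forward direction $(\Rightarrow)$, I would assume $[x,y] = xf(y) - yf(x)$ and simply evaluate the two bracket terms appearing on the right-hand side of the claimed identity. Setting $y = 1_A$ and $x = 1_A$ respectively gives $[1_A,y] = f(y) - y\,f(1_A)$ and $[x,1_A] = x\,f(1_A) - f(x)$. Substituting these into $x[1_A,y] + y[x,1_A]$ produces $xf(y) - xy\,f(1_A) + yx\,f(1_A) - yf(x)$, and here the point is that commutativity of $A$ forces the two middle terms to cancel, leaving $xf(y) - yf(x) = [x,y]$. This is where the hypothesis that $A$ is a commutative $R$-algebra is genuinely used.

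For the converse $(\Leftarrow)$, the key move is to produce the operator explicitly: I would define $f(x) := [1_A, x]$. Because the bracket is bilinear, it is linear in its second slot, so $f$ is automatically an $R$-module endomorphism, with no well-definedness issue to check. Then I would compute $xf(y) - yf(x) = x[1_A,y] - y[1_A,x]$, rewrite $-y[1_A,x]$ as $+\,y[x,1_A]$ using anticommutativity (equivalently $[1_A,x] = -[x,1_A]$), and apply the hypothesis $[x,y] = x[1_A,y] + y[x,1_A]$ to conclude that $f$ induces exactly $[\,,\,]$.

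Neither direction presents a real obstacle; the whole content is the observation that the correct candidate operator is $f(x) = [1_A, x]$. If anything, the step most worth stating carefully is the cancellation in the forward direction, since it silently relies on commutativity of $A$, and the corresponding use of anticommutativity of the Lie bracket in the backward direction. Note that this argument makes no use of the Jacobi identity, so the result is really a statement about bilinear anticommutative forms; I would keep the proof at that level of generality rather than invoking the full Lie algebra axioms.
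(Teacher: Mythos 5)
Your proposal is correct and follows essentially the same route as the paper: the forward direction is the same computation (the paper inserts $-xyf(1_A)+xyf(1_A)$ and regroups, which is your cancellation read in reverse), and the converse uses the identical key move of defining $f(x)=[1_A,x]$ and invoking anticommutativity. Your explicit remarks on where commutativity of $A$ enters and that the Jacobi identity is never used are accurate but do not change the argument.
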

\begin{proof} $( \Rightarrow )$ Suppose that the Lie bracket
operation is induced by an $R$-module endomorphism $f$ on  $A$.
Then
\begin{eqnarray*}
&& [x,y] = [x,y]_{f}\\ && = xf(y)-yf(x)
\\ && = xf(y)-xyf(1_{A}) + xyf(1_{A})-yf(x)  \\ && = x[1_{A}, y]_{f} + y[x,
1_{A}]_{f} \\ && = x[1_{A}, y] + y[x, 1_{A}].
\end{eqnarray*}
$( \Leftarrow )$ Define $f(x) = [1_{A}, x]$,  then $f$ is an
$R$-module endomorphism, and
\begin{eqnarray*}
&& [x,y]_{f} = xf(y)-yf(x) = x[1_{A},y] - y[1_{A},x]   \\ && =
x[1_{A},y] + y[x, 1_{A}] = [x, y].
\end{eqnarray*}
\end{proof}
Note in the second part of the proof, we could also take
 $ f(x) = [1_{A}, x] + xt $, where $ t \in  A$ can be  arbitrarily
chosen. The following proposition gives a general result
concerning this phenomenon.
\begin{prop}
Let $f_{1}$ and $f_{2}$ are $R$-module endomorphisms on  $A$.
$[,]_{f_{1}} =[,]_{f_{2}} $ if and only if  $f_{1}(x) = f_{2}(x) +
xt $ for some $t \in A$.
\end{prop}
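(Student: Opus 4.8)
The plan is to prove both directions directly from the definition $[x,y]_{f} = xf(y) - yf(x)$, using commutativity of $A$ throughout. The entire argument reduces to analyzing the single $R$-module endomorphism $g = f_{1} - f_{2}$, so I would set this up first and treat everything in terms of $g$.

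For the ``if'' direction, I would substitute $f_{1}(x) = f_{2}(x) + xt$ into the bracket and expand $[x,y]_{f_{1}} = x f_{1}(y) - y f_{1}(x)$. The two extra terms produced are $xyt$ and $yxt$, and these cancel precisely because $A$ is commutative. Hence $[x,y]_{f_{1}} = [x,y]_{f_{2}}$ for all $x, y \in A$, and this direction is immediate.

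For the ``only if'' direction, the key observation is that the bracket is $R$-linear in its inducing endomorphism, so $[,]_{f_{1}} = [,]_{f_{2}}$ is equivalent to $[,]_{g} = 0$; that is, $x g(y) = y g(x)$ for all $x, y \in A$. The decisive step is then to specialize $y = 1_{A}$, which collapses the relation to $g(x) = x\,g(1_{A})$. Setting $t = g(1_{A}) = f_{1}(1_{A}) - f_{2}(1_{A})$ then gives exactly $f_{1}(x) = f_{2}(x) + xt$, as required.

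There is essentially no serious obstacle; the one point worth recording is that $t$ is not free but forced to equal $g(1_{A})$, so uniqueness of $t$ (when it exists) comes out automatically from the computation. I would also remark that this proposition refines the observation made immediately after Proposition~\ref{prop2}, where it was noted that $f(x) = [1_{A}, x] + xt$ induces the same bracket for every $t \in A$; that earlier remark is exactly the ``if'' direction applied to $f_{2}(x) = [1_{A}, x]$.
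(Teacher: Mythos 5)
Your proposal is correct and follows essentially the same route as the paper: both reduce the statement to the difference $g=f_{1}-f_{2}$, characterize $xg(y)=yg(x)$ for all $x,y$ by setting $y=1_{A}$ to force $g(x)=xg(1_{A})$, and use commutativity of $A$ for the converse. Your added remark that $t$ is necessarily $g(1_{A})$ is a correct (and mild) refinement that the paper's proof also implicitly contains.
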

\begin{proof}
We claim that an $R$-module endomorphism $f$ on  $A$ satisfies
\[ xf(y) - yf(x) = 0 \]
if and only if $f(x) = xt $ for some $ t \in A$.

If the equation holds, let $y = 1_{A}$, and we have $f(x) = xt$,
where $t=f(1_{A})$. Conversely if $f(x) = xt$ for some $t \in A$,
then
\[ xf(y) - yf(x) =xyt - yxt = 0. \]

The theorem follows immediately from the fact that
 $[,]_{f_{1}}=[,]_{f_{2}} $ is equivalent to
 \[   xf(y)- yf(x) = 0,  \] where  $f = f_{1} - f_{2}$.
\end{proof}

\begin{coro}
If a Lie bracket operation $[,]$ on an $R$-algebra $A$  satisfies
\begin{displaymath} [x,y] = x[1_{A}, y] + y[x, 1_{A}]
\end{displaymath}for all $ x, y \in A$, then it is induced by any of  the
$R$-module endomorphisms on  $A$ of the following form
          \[ f_{t}(x) = [ 1, x] + xt, \]
where $t \in A$ is a fixed element,  and it is not induced by any
other  $R$-module endomorphisms on  $A$. \wbox
 \label{cor1}
\end{coro}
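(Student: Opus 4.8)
The plan is to obtain the corollary as a direct synthesis of the two preceding propositions, since the hypothesis imposed on $[,]$ is precisely the condition appearing in Proposition \ref{prop2}. First I would observe that the stated identity $[x,y] = x[1_A,y] + y[x,1_A]$ is exactly the hypothesis of the $(\Leftarrow)$ direction of Proposition \ref{prop2}, whose proof already exhibits an explicit $R$-module endomorphism inducing the bracket, namely $f_0(x) = [1_A, x]$. Thus the bracket $[,]$ is induced by $f_0$; in particular $[,] = [,]_{f_0}$. This step carries essentially all of the existence content and requires no new argument beyond citing Proposition \ref{prop2}.

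Next I would invoke the second proposition above, which characterizes when two $R$-module endomorphisms induce the same bracket, applied with $f_2 = f_0$. By that result, an $R$-module endomorphism $g$ satisfies $[,]_g = [,]_{f_0} = [,]$ if and only if $g(x) = f_0(x) + xt = [1_A, x] + xt$ for some fixed $t \in A$, that is, if and only if $g = f_t$ in the notation of the statement. This single equivalence delivers both halves of the corollary at once: read forward it shows that every $f_t$ induces $[,]$, and read backward it shows that no endomorphism outside the family $\{f_t : t \in A\}$ can induce $[,]$, which is the asserted uniqueness. In effect the corollary merely formalizes the remark preceding it, that one may replace $[1_A, x]$ by $[1_A, x] + xt$ for arbitrary $t$.

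The only auxiliary check I foresee is that each $f_t$ genuinely is an $R$-module endomorphism; this is immediate, since $f_0$ is one and the map $x \mapsto xt$ is $R$-linear, so their sum is $R$-linear. I do not expect any real obstacle: the substantive work has been front-loaded into Proposition \ref{prop2} and the subsequent uniqueness proposition, and the corollary is their formal combination. If any care is warranted at all, it is purely bookkeeping, namely confirming that the element $t$ furnished by the uniqueness proposition (which its proof identifies as $t = g(1_A) - f_0(1_A)$) is the same $t$ that labels $f_t$; this is automatic from the definition of $f_t$ once one evaluates $f_t$ at $1_A$.
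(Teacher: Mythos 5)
Your proposal is correct and matches the paper's intent exactly: the corollary is stated with its proof omitted precisely because it is the formal combination of Proposition \ref{prop2} (which supplies $f_0(x)=[1_A,x]$ as an inducing endomorphism) and the subsequent proposition characterizing when two endomorphisms induce the same bracket. Your bookkeeping remark about $t = f_t(1_A)$ (using $[1_A,1_A]=0$) is also accurate.
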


Although it is not necessary for an $R$-module endomorphism $f$ to
be an averaging operator on $A$ in order to define a Lie bracket
operation, we are only interested in averaging operators when
discussing  the induced Lie algebraic structures in this thesis.
This restriction grants much richer results as we will see in the
next section. The following theorem describes the Lie bracket
operations on $A$ which can be induced by averaging operators.

For a Lie bracket operation $L = [ , ] $ on an $R$-algebra $A$,
define
\[ \Gamma(L) = \{ a \in A : [ax,y] = a[x,y], x,y\in A \}, \]
and for each $t \in A$ define an $R$-module endomorphism $L_{t}$
by
\[ L_{t}(x) = [1,x] + xt,   \]
$\Gamma(L)$ is a subalgebra of $A$.
\begin{theorem}
A Lie bracket operation $L = [ , ]$ on an $R$-algebra $A$ is
induced by an averaging operator $f$ on $A$ if and only if
$[x,y]=x[1_{A}, y]$ $+y[x, 1_{A}] , x, y \in A$ and for some fixed
$t \in A$ , $L_{t}(A) \subseteq \Gamma(L)$.
 \label{thm9}
\end{theorem}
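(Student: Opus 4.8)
The plan is to reduce everything to the prior Proposition~\ref{prop2} and Corollary~\ref{cor1}, which already settle the weaker question of when $L$ is induced by an arbitrary $R$-module endomorphism. First I would observe that an averaging operator is in particular an $R$-module endomorphism, so the forward implication of Proposition~\ref{prop2} forces the identity $[x,y] = x[1_{A},y] + y[x,1_{A}]$; this identity is therefore necessary, and I may assume it throughout the argument. Granting it, Corollary~\ref{cor1} tells us that the $R$-module endomorphisms inducing $L$ are \emph{exactly} the maps $L_{t}$, $t \in A$. Hence the theorem reduces to the claim that $L$ is induced by an averaging operator if and only if $L_{t}$ is an averaging operator for some $t$, and the real content is to convert the averaging identity for $L_{t}$ into the containment $L_{t}(A) \subseteq \Gamma(L)$.

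The key step is to identify $\Gamma(L)$ in terms of any endomorphism $f$ inducing $L$. Expanding $[ax,y]_{f} - a[x,y]_{f}$ via $[x,y]_{f} = xf(y) - yf(x)$ and commutativity collapses it to $y\bigl(af(x) - f(ax)\bigr)$. Setting $y = 1_{A}$ (the converse being immediate) shows that $a \in \Gamma(L)$ if and only if $f(ax) = af(x)$ for all $x \in A$; that is, $\Gamma(L)$ is precisely the set of elements that $f$ pulls through a product.

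The averaging identity then reads off directly. For an $f$ inducing $L$, the condition $f(xf(y)) = f(x)f(y)$ for all $x,y$ says, after rewriting $xf(y) = f(y)x$ and putting $a = f(y)$, that $f(f(y)\,x) = f(y)f(x)$ for every $x$; by the previous paragraph this is exactly $f(y) \in \Gamma(L)$. Quantifying over $y$, we conclude that $f$ is an averaging operator if and only if $f(A) \subseteq \Gamma(L)$. Applying this to $f = L_{t}$, which induces $L$ by Corollary~\ref{cor1}, finishes both directions: if $L$ is induced by an averaging operator, that operator is some $L_{t}$, and its being averaging gives $L_{t}(A) \subseteq \Gamma(L)$; conversely, the hypothesis $L_{t}(A) \subseteq \Gamma(L)$ makes $L_{t}$ an averaging operator inducing $L$.

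The one point demanding care is that the set $\Gamma(L)$ in the hypothesis is defined from the bracket $L$ alone, whereas the averaging criterion is naturally phrased through $\Gamma([,]_{L_{t}})$ for the specific endomorphism $f = L_{t}$. These coincide only because Corollary~\ref{cor1} guarantees $[,]_{L_{t}} = L$; without that identification the two occurrences of $\Gamma$ would be unrelated. I expect this bookkeeping—matching the existential quantifier on $t$ with the two descriptions of $\Gamma$—to be the only genuine subtlety, the remaining manipulations being routine expansions of the bracket.
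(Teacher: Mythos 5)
Your proof is correct and follows essentially the same route as the paper: both reduce to Proposition~\ref{prop2} and Corollary~\ref{cor1} and then match the averaging identity for $L_t$ against membership of its image in $\Gamma(L)$. The only difference is organizational --- you package the argument as a single equivalence (an endomorphism $f$ inducing $L$ is averaging iff $f(A)\subseteq\Gamma(L)$, via the identification $\Gamma(L)=\{a\in A: f(ax)=af(x)\ \mbox{for all } x\}$), whereas the paper carries out the two directions by the same direct computations done separately.
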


\begin{proof} $( \Rightarrow )$ If $[x,y] = [x,y]_{f}$ then
$[x,y] = x[1_{A},y] + y[x, 1_{A}] $ by Proposition {\ref{prop2}}.
Furthermore, by taking $t = f(1_{A})$ we have $L_{t} = f$, and for
any $a, x, y \in A$
\begin{eqnarray*}
&& [ L_{t}(a)x, y] = [f(a)x, y] \\ && = f(a)xf(y) - yf(f(a)x) \\
&& = f(a)xf(y) - yf(x)f(a) \\ && = f(a)[x,y] = L_{t}(a)[x,y].
\end{eqnarray*}
$( \Leftarrow )$ Let $f = L_{t}$. We know that $[,]$ is induced by
$L_t$ according to Corollary {\ref {cor1}}.  Therefore the Lie
bracket operation is induced by $f$.  Furthermore
\begin{eqnarray*}
  && f(xf(y)) = [1_{A}, xf(y)] + xf(y)t
\\ && = [1_{A}, x]f(y) + xf(y)t
\\ && = ([1_{A},x] + xt )f(y)
\\ && = f(x)f(y).
\end{eqnarray*}
We conclude that $f$ is an averaging operator on $A$.\wbox
\end{proof}

\begin{prop}
If $R$ is a field, and an $R$-algebra $A$ is a 2-dimensional
vector space over $R$. Any Lie bracket operation on $A$ is induced
by some averaging operator on  $A$. \label{prop2dim}
\end{prop}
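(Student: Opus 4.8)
The plan is to verify the criterion of Theorem~\ref{thm9} for every Lie bracket operation $[,]$ on a $2$-dimensional $R$-algebra $A$. By that theorem, it suffices to establish two conditions: first, that $[x,y] = x[1_A,y] + y[x,1_A]$ for all $x,y \in A$; and second, that there exists some $t \in A$ with $L_t(A) \subseteq \Gamma(L)$, where $L_t(x) = [1_A,x] + xt$. Since $R$ is a field and $\dim_R A = 2$, I would fix a basis $\{1_A, e\}$ of $A$ (note that $1_A$ is part of a basis since $A$ has an identity, so I can extend to $\{1_A, e\}$ for some $e$). The entire structure of $A$ as an algebra is then determined by the single product $e^2 = \alpha 1_A + \beta e$ for scalars $\alpha, \beta \in R$, and any Lie bracket is determined by the single value $[1_A, e]$ together with anticommutativity and bilinearity.

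\textbf{First condition.}
First I would check that the identity $[x,y] = x[1_A,y] + y[x,1_A]$ holds automatically in this low-dimensional setting. Because the bracket is bilinear and alternating, it is completely determined by $[1_A,e]$, and $[1_A,1_A] = [e,e] = 0$. I would write $[1_A,e] = c_1 1_A + c_2 e$ for scalars $c_1, c_2$ and verify the identity on the basis pairs. The pair $(1_A,1_A)$ and $(e,e)$ give $0=0$ trivially (using $[x,1_A] = -[1_A,x]$). The nontrivial check is the pair $(1_A,e)$ and $(e,1_A)$, which reduces to a direct computation that both sides agree; I expect this to hold identically since any alternating bilinear form on a $2$-dimensional space has essentially one degree of freedom, and the claimed identity is linear in that parameter.

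\textbf{Second condition — the main obstacle.}
The crux is producing $t \in A$ with $L_t(A) \subseteq \Gamma(L)$. I would compute $\Gamma(L) = \{a \in A : [ax,y] = a[x,y]\ \forall x,y\}$ explicitly for my basis; since $\Gamma(L)$ is a subalgebra containing $1_A$, either $\Gamma(L) = A$ (in which case any $t$ works, e.g.\ $t=0$) or $\Gamma(L) = R\cdot 1_A$ is one-dimensional. In the first case we are done immediately. In the second case I must choose $t$ so that $L_t(1_A) = [1_A,1_A] + t = t$ and $L_t(e) = [1_A,e] + et$ both land in $R\cdot 1_A$; writing $t = t_1 1_A + t_2 e$ this becomes a small system of linear equations in $t_1, t_2$ over the field $R$, and I would solve it directly. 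The potential difficulty is that this linear system might have no solution; the key point I expect to exploit is that $\Gamma(L)$ being a \emph{subalgebra} (hence closed under multiplication) severely constrains the algebra structure when it is one-dimensional, forcing $e^2$ and $[1_A,e]$ into a compatible form that guarantees solvability. I would handle this by case analysis on $\dim_R \Gamma(L)$, using the subalgebra closure to pin down the relevant structure constants, and then exhibit $t$ explicitly (or solve the $2\times 2$ linear system) in each case, invoking that $R$ is a field so that the requisite inverses exist.
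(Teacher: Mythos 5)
Your proposal is correct and follows essentially the same route as the paper: fix the basis $\{1_A,\varepsilon\}$, check the identity $[x,y]=x[1_A,y]+y[x,1_A]$ on basis pairs by bilinearity and alternation, and then choose $t$ to be the appropriate scalar multiple of $1_A$ (namely $t=-r_2 1_A$ where $[1_A,\varepsilon]=r_1 1_A+r_2\varepsilon$) so that $L_t(A)\subseteq R\cdot 1_A\subseteq\Gamma(L)$. The only cosmetic difference is that the paper verifies the averaging identity for $L_t$ directly (its image consists of scalars, so the identity is immediate) rather than citing Theorem~\ref{thm9}, and your worry that the $2\times 2$ linear system might be unsolvable does not materialize --- it always has the solution $t_2=0$, $t_1=-r_2$, with no constraint needed from the multiplication on $A$.
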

\begin{proof}
Let $\{ 1_{A}, \varepsilon \}$ be a basis of $A$,   $L = [,]$ be a
Lie bracket operation on $A$, and
 $[1_{A}, \varepsilon] = r_{1}1_{A} + r_{2}\varepsilon$. Define
        \[\theta_{L}( x, y ) = x[1_{A}, y] + y[x, 1_{A}] - [x,y] , \;\; x, \; y
        \in A.  \]
$\theta_{L}$ is a $R$-bilinear map, $\theta_{L}(x,x) =0$ for all
$x \in A$. $L = [,]$ is induced by an $R$-module endomorphism on
$A$ if and only if $\theta_{L}(x, y)=0$ for all $x, y \in A$.
Since
    \[\theta_{L}(1_{A}, 1_{A}) =  \theta_{L}( \varepsilon, \varepsilon
   )=0,  \]
\[\theta_{L}(1_{A}, \varepsilon) = \theta_{L}(\varepsilon, 1_{A})
=0, \]   we have $\theta_L (x,y) = 0 $ for all $x,y \in A$. Hence
$L = [,]$ is induced by
       \[ f(x) = [1_A, x ] + tx,  \]
where $t \in A$ can be arbitrarily chosen. If we take
 $t = -r_{2}1_{A}$, then for any $x, y \in A$,
     \[ x = m_{1}1_{A} + m_{2}\varepsilon, \]
     \[ y = n_{1}1_{A} + n_{2}\varepsilon, \]
where $ m_{1}, m_{2},n_{1}, n_{2} \in R$, we have
\[ f(x) = [1_A, x] -r_{2}x  = (m_{2} r_{1}- r_{2}m_{1})1_{A},\]
\[ f(y)= [1_A, y] -r_{2}y  = (n_{2} r_{1}- r_{2}n_{1})1_{A}.\]
Hence we have
\[ f(xf(y)) = f(x)f(y),  \]
that is, $f$ is an averaging operator on  $A$.\wbox
\end{proof}

The result of Proposition {\ref{prop2dim}} is not generally true
for $R$-algebras of higher dimensions.

\begin{ex}
Let $R$ be the set of real numbers, $A = R^{3}$. $A$ is a
3-dimensional vector space over $R$, and
   \[{\varepsilon}_{1} = (1, 0, 0), \]
   \[{\varepsilon}_{2} = (0, 1, 0), \]
   \[{\varepsilon}_{3} = (0, 0, 1), \]
is a basis of $A$. If we define a multiplication on $A$
\[ (x_{1}, x_{2}, x_{3})(y_{1}, y_{2}, y_{3}) = (x_{1}y_{1},
x_{2}y_{2}, x_{3}y_{3}), \] then $A$ is a commutative $R$-algebra.
A Lie bracket operation can be defined on $A$ as follows:
    \[[\varepsilon_{1}, \varepsilon_{2}] = \varepsilon_{1}, \]
    \[[\varepsilon_{1}, \varepsilon_{3}] = \varepsilon_{2}, \]
    \[[\varepsilon_{2}, \varepsilon_{3}] = \varepsilon_{3}. \]
If this Lie bracket operation was induced by some averaging
operator $f$ on  $A$, then
\[ f(\varepsilon_{1}) = f([\varepsilon_{1}, \varepsilon_{2}]) = 0,
\]
\[ f(\varepsilon_{2}) = f([\varepsilon_{1}, \varepsilon_{3}]) = 0,
\]
\[ f(\varepsilon_{3}) = f([\varepsilon_{2}, \varepsilon_{3}]) = 0.
\]
Hence $f = 0$. But zero map  is impossible to induce any
nontrivial Lie bracket operators.  A contradiction.
\end{ex}

\subsection{Finite dimensional algebras over a field}
 Now we consider the condition under which a Lie bracket
operation on a finitely dimensional algebra over a field  is
induced by an averaging operator. Let $A$ be a $n$-dimensional
$R$-algebra $(n > 0) $, $R$ is a field, and $L = [,]$ be a Lie
bracket operation on  $A$. Recall we define
\[ \Gamma(L) = \{ a \in A : [ax,y] = a[x,y], x,y\in A \}. \]
$\Gamma(L)$ is a subalgebra of $A$ and
 $\{r1_{A}: r \in R \} \subseteq \Gamma(L) $.
Suppose that $\{ \varepsilon_{1}, \ldots, \varepsilon_{p} \}$ is a
basis of $\Gamma(L)$,
  $\{\varepsilon_{1}, \ldots, \varepsilon_{p}, \delta_{1}, \ldots,\delta_{q}\}$
   is a basis of $A$, $ p, q > 0, p+q = n$, and for each $i$ and
   $j$, $ 1 \leq i \leq p$, $ 1 \leq j \leq q$,

\[ \displaystyle \varepsilon_{i} \delta_{j} = \sum_{1 \leq k \leq
p}a_{k}^{(i, j)}\varepsilon_{k} + \sum_{1 \leq k \leq q}b_{k}^{(i,
j)}\delta_{k},\]
\[ \displaystyle [1_{A}, \delta_{j}] = \sum_{1 \leq k \leq
p}c_{k}^{(j)}\varepsilon_{k} - \sum_{1 \leq k \leq q}d_{k}^{(
j)}\delta_{k},\] let
\[ B_{j} = \left[ \begin{array}{cccc}
b^{(1,j)}_{1} & b^{(2,j)}_{1} & \ldots & b^{(p,j)}_{1}
\\ b^{(1,j)}_{2} & b^{(2,j)}_{2} & \ldots & b^{(p,j)}_{2}
\\ \ldots & \ldots & \ldots & \ldots
\\ b^{(1,j)}_{q} & b^{(2,j)}_{q} & \dots & b^{(p,j)}_{q}
\end{array} \right] \;\;\;\;\;
and \;\;\;\;  \beta_{j} = \left[ \begin{array}{c} d^{(j)}_{1} \\
d^{(j)}_{2} \\ \vdots \\ d^{(j)}_{q} \end{array} \right ] \] then
we have the following theorem.
\begin{theorem}
With the notations defined above, the Lie bracket operation
 $L =[,]$ of $A$ is induced by an averaging operator on  $A$ if and only if
 \[ [x, y] = x[1_{A}, y] + y[x, 1_{A}] \;\; x, y \;\; \in A, \] and  the
following system of linear equations
\[  \left[ \begin{array}{c}
B_{1} \\ B_{2} \\ \vdots \\ B_{q} \end{array}\right ] \left[
\begin{array}{c} x_{1} \\ x_{2} \\ \vdots \\ x_{p}  \end{array}\right ] =
\left[
\begin{array}{c} \beta_{1} \\ \beta_{2} \\ \vdots \\ \beta_{q}
\end{array} \right ] \]
has a solution in $R^{p}$.
 \label{thm11}
\end{theorem}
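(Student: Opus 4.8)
The plan is to reduce the statement, via Theorem \ref{thm9}, to a concrete linear-algebra condition on the single element $t$. Theorem \ref{thm9} says that $L$ is induced by an averaging operator precisely when the identity $[x,y]=x[1_A,y]+y[x,1_A]$ holds (this is the first displayed condition) and, in addition, there exists $t\in A$ with $L_t(A)\subseteq\Gamma(L)$, where $L_t(x)=[1,x]+xt$. Since the first condition already appears verbatim in the statement, the whole content of the proof is to show that the existence of such a $t$ is equivalent to solvability of the stated system. Throughout I may assume the first condition holds.

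First I would pin down $t$. Because $[1_A,1_A]=0$ we have $L_t(1_A)=t$, and since $1_A\in\Gamma(L)$, the requirement $L_t(A)\subseteq\Gamma(L)$ forces $t=L_t(1_A)\in\Gamma(L)$. Hence any admissible $t$ has the form $t=\sum_{i=1}^p x_i\varepsilon_i$ with $x_i\in R$; this is exactly why the unknown vector has $p$ components. Conversely, for any such $t\in\Gamma(L)$ the condition $L_t(A)\subseteq\Gamma(L)$ need only be checked on the basis vectors $\varepsilon_i$ and $\delta_j$, by $R$-linearity of $L_t$. The $\varepsilon_i$ impose nothing: using the first condition one checks that $[1_A,a]=0$ for every $a\in\Gamma(L)$ (take $x=1_A$ in the defining relation $[ax,y]=a[x,y]$ to get $[a,y]=a[1_A,y]$, compare with the first condition to obtain $y[a,1_A]=0$ for all $y$, then set $y=1_A$). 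Therefore $L_t(\varepsilon_i)=[1_A,\varepsilon_i]+\varepsilon_i t=\varepsilon_i t\in\Gamma(L)$ automatically, as $\Gamma(L)$ is a subalgebra.

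The remaining conditions come from the $\delta_j$. I would expand $L_t(\delta_j)=[1_A,\delta_j]+\delta_j t$, substitute $\delta_j t=\sum_i x_i\,\varepsilon_i\delta_j$ together with the structure constants $\varepsilon_i\delta_j=\sum_k a_k^{(i,j)}\varepsilon_k+\sum_k b_k^{(i,j)}\delta_k$ and $[1_A,\delta_j]=\sum_k c_k^{(j)}\varepsilon_k-\sum_k d_k^{(j)}\delta_k$, and then collect the $\delta$-components. Since $\Gamma(L)=\mathrm{span}\{\varepsilon_1,\dots,\varepsilon_p\}$, membership $L_t(\delta_j)\in\Gamma(L)$ is equivalent to the vanishing of every $\delta_k$-coefficient, that is, $\sum_{i} b_k^{(i,j)}x_i=d_k^{(j)}$ for $1\le k\le q$. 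In matrix form this reads $B_j x=\beta_j$, and ranging over $j=1,\dots,q$ yields precisely the stacked system in the statement. Note that the $a_k^{(i,j)}$ and $c_k^{(j)}$ affect only the $\varepsilon$-part of $L_t(\delta_j)$, which lies in $\Gamma(L)$ regardless, so they never enter the system.

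Assembling the two directions: if $L$ is induced by an averaging operator, Theorem \ref{thm9} yields a $t$, which the above shows must lie in $\Gamma(L)$ and whose coordinates solve the system; conversely, a solution $x$ defines $t=\sum x_i\varepsilon_i$ with $L_t(A)\subseteq\Gamma(L)$, and Corollary \ref{cor1} together with Theorem \ref{thm9} shows $f=L_t$ is an averaging operator inducing $L$. The step I expect to require the most care is the bookkeeping in the $\delta_j$ expansion, keeping the $\varepsilon$- and $\delta$-parts separate and confirming that exactly the $\delta$-coefficients, indexed correctly by $(k,i)$, reproduce the matrices $B_j$ and the vectors $\beta_j$. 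The conceptual crux, by contrast, is the short observation $L_t(1_A)=t$, which simultaneously forces $t\in\Gamma(L)$ and explains why the unknown has dimension $p$.
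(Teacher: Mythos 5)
Your proposal is correct and follows essentially the same route as the paper: reduce via Theorem \ref{thm9} to the existence of $t$ with $L_t(A)\subseteq\Gamma(L)$, observe $t=L_t(1_A)\in\Gamma(L)$ so $t=\sum_i x_i\varepsilon_i$, expand $L_t(\delta_j)$ in the given basis, and read off the system $B_jx=\beta_j$ from the vanishing of the $\delta_k$-coefficients. Your extra check that the basis vectors $\varepsilon_i$ impose no conditions (via $[1_A,a]=0$ for $a\in\Gamma(L)$) is a small detail the paper leaves implicit, but it does not change the argument.
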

\begin{proof}
$L = [,]$ is induced by an $R$-module endomorphism on  $A$  if and
only if
 \[ [x, y] = x[1_{A}, y] + y[x, 1_{A}] \;\; x, y \;\; \in A. \]
If this is true, then $L = [,]$ is induced by an averaging
operator if and only if $L_{t}(A) \subseteq \Gamma(L)$ for some
$R$-module endomorphism $L_{t} : A \rightarrow A$ defined as
\[ L_{t}(x) = [1_{A}, x] + xt, \]
where $t \in A $ is a fixed element. Such element $t$, if exists,
must be an element of $\Gamma(L)$, since $ t = L_{t}(1_{A})$.
Hence $L=[,]$ is induced by an averaging operator is equivalent to
that there exist $x_{1}, \ldots , x_{p} \in R$ such that for
 $\displaystyle t = \sum_{1 \leq i \leq p} x_{i} \varepsilon_{i} $,
 \[ L_{t}(\delta_{j}) = [1_{A}, \delta_{j}] + \delta_{j} t  \in
 \Gamma(L) \]
 holds for all $1 \leq j \leq q$.
Since for each $j$ ( $1 \leq j \leq q $),
\begin{eqnarray*}
 && L_{t}(\delta_{j}) = [1_{A}, \delta_{j}] + \delta_{j}t
 \\ &&  =
\sum_{1 \leq k \leq p}c_{k}^{(j)}\varepsilon_{k} - \sum_{1 \leq k
\leq q}d_{k}^{( j)}\delta_{k} + \sum_{1 \leq i \leq p}
x_{i}(\varepsilon_{i} \delta_{j})
\\ && = \sum_{1 \leq k \leq p}c_{k}^{(j)}\varepsilon_{k} - \sum_{1 \leq k
\leq q}d_{k}^{( j)}\delta_{k}
\\ &&\ \ \ +  \sum_{1 \leq i \leq q}
x_{i}(\sum_{1 \leq k \leq p}a_{k}^{(i, j)}\varepsilon_{k}) +
\sum_{1 \leq i \leq q} x_{i}(\sum_{1 \leq k \leq q}b_{k}^{(i,
j)}\delta_{k}),
\end{eqnarray*}
hence for $L_{t}(\delta_{j})$ to belong to $\Gamma(L)$, the
following must be true:
\[ \sum_{1 \leq i \leq p}x_{i}(\sum_{1 \leq k \leq q}b_{k}^{(i,
j)}\delta_{k}) =  \sum_{1 \leq k \leq q}d_{k}^{( j)}\delta_{k}, \]
that is
\[ \sum_{1 \leq k \leq q}(\sum_{1 \leq i \leq p}b_{k}^{(i,
j)}x_{i})\delta_{k} =  \sum_{1 \leq k \leq q}d_{k}^{(
j)}\delta_{k},
\]
and the conclusion of the theorem follows from this discussion
immediately. \wbox
\end{proof}

\newpage
\section{Properties of the induced Lie algebraic structures}
\subsection{Subalgebras, ideals, and homomorphisms }
Suppose that $f$ is an averaging operator on  an $R$-algebra $A$,
an ideal of $A$ is said to be an ideal of $(A,f)$ if it is
invariant under $f$, and a subalgebra of $A$ is said to be an
subalgebra of $(A,f)$ if it is invariant under $f$. An ideal  of a
Lie algebra $L$ is a submodule $I$ of $L$ such that $[x, I]
\subseteq I $ for all $x \in L$, and a subalgebra of $L$ is a
submodule $L_{1}$ of $L$ such that $[L_{1}, L_{1}] \subseteq
L_{1}$.  Also recall that we use $A_{f}$ to denote the induced Lie
algebra $(A, [,]_{f})$ by an averaging operator $f$ on  $A$.
\begin{prop}
Let $f$ be an averaging operator on  $A$,  $A_{1}$ an averaging
subalgebra of $(A,f)$, and $I$ an averaging ideal of $(A,f)$. then

(i) $A_{1}$ is a subalgebra of the Lie algebra $A_{f}$.

(ii) $I$ is an ideal of the Lie algebra $A_{f}$.

(iii) the quotient Lie algebra $A_{f}/I$ is induced by the
averaging operator
\[ \overline{f}(a+I) = f(a) + I \]
of $A/I$, i.e.
\[ A_{f}/I = (A/I, \overline{f}). \]
\end{prop}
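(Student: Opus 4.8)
The plan is to verify each of the three assertions directly against the definitions, since all three reduce to elementary closure computations with the bracket $[x,y]_f = xf(y) - yf(x)$ together with the hypotheses $f(A_1)\subseteq A_1$ and $f(I)\subseteq I$.

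For part (i), I would take $x,y \in A_1$ and observe that $f(x), f(y) \in A_1$ because $A_1$ is invariant under $f$. Since $A_1$ is an $R$-subalgebra it is closed under multiplication and under $R$-linear combinations, so $xf(y) - yf(x) \in A_1$; that is, $[A_1, A_1]_f \subseteq A_1$, which is exactly the condition for $A_1$ to be a Lie subalgebra of $A_f$. Part (ii) is the same kind of computation but now using that $I$ absorbs multiplication by arbitrary elements of $A$: for $x \in A$ and $z \in I$, the term $zf(x)$ lies in $I$ because $z \in I$, and the term $xf(z)$ lies in $I$ because $f(z) \in I$ (by $f$-invariance of $I$) and $I$ is an ideal; hence $[x,z]_f = xf(z) - zf(x) \in I$, giving $[A, I]_f \subseteq I$, the Lie-ideal condition.

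For part (iii), I would first invoke the earlier proposition guaranteeing that $\overline{f}(a+I) = f(a)+I$ is a well-defined averaging operator on $A/I$, so that by Theorem \ref{thm8} the pair $(A/I, \overline{f})$ carries the induced Lie bracket $[,]_{\overline{f}}$. Since $I$ is a Lie ideal of $A_f$ by part (ii), the quotient Lie algebra $A_f/I$ is defined, with bracket $[x+I, y+I] = [x,y]_f + I$. The heart of the argument is then the single identity
\begin{eqnarray*}
[x+I,\ y+I]_{\overline{f}}
&=& (x+I)\,\overline{f}(y+I) - (y+I)\,\overline{f}(x+I) \\
&=& (x+I)(f(y)+I) - (y+I)(f(x)+I) \\
&=& (xf(y) - yf(x)) + I \\
&=& [x,y]_f + I,
\end{eqnarray*}
which shows the two bracket operations on $A/I$ coincide, i.e. $A_f/I = (A/I, \overline{f})$.

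I do not expect a genuine obstacle here; the only points requiring care are bookkeeping ones. In (ii) one must remember that the Lie-ideal condition quantifies over all $x$ in the whole Lie algebra $A_f = A$, not merely over $I$, and use that ideals absorb products. In (iii) the subtlety is purely conceptual: two a priori different constructions — forming the quotient Lie algebra of $A_f$ and forming the induced Lie algebra of the quotient averaging algebra $(A/I,\overline{f})$ — must be shown to agree, which the displayed computation settles once well-definedness of $\overline{f}$ has been cited.
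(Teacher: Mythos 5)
Your proposal is correct and follows essentially the same route as the paper: parts (i) and (ii) are the same direct closure computations with $[x,y]_f = xf(y)-yf(x)$ using $f$-invariance, and part (iii) is the same one-line comparison showing $[x,y]_f + I = [x+I,y+I]_{\overline{f}}$. Your explicit citation of the earlier well-definedness result for $\overline{f}$ is a small extra care the paper leaves implicit, but the argument is identical in substance.
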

\begin{proof}

(i) For $a_{1}, a_{2} \in A_{1}$, we have
  $f(a_{1}), f(a_{2}) \in A_{1} $, hence
   \[ [ a_{1}, a_{2}]_{f} = a_{1}f(a_{2}) - a_{2}f(a_{1}) \in
   A_{1} .\]

(ii) For $a \in I$, $f(a) \in I$. Hence for any $x \in A$, we have
    \[ [a, x]_{f} = af(x) - xf(a) \in I. \]

(iii) We use $[,]$ to denote the Lie bracket operation in
$A_{f}/I$.  For $a_{1}, a_{2} \in A_{1}$,
\begin{eqnarray*}
&& [a_{1}+I,a_{2}+I ] = [a_{1},a_{2}]_{f} + I
\\ && = a_{1}f(a_{2}) - a_{2}f(a_{1}) + I
\\ && = (a_{1}+I)( f(a_{2}) + I) - (a_{2} + I)(f(a_{1}) + I)
\\ && = (a_{1}+I)( \overline f(a_{2} + I)) - (a_{2} + I)(\overline f(a_{1} + I))
\\ && = [a_{1}+I,a_{2}+I ]_{\overline f}.
\end{eqnarray*}
So we have proved the proposition. \wbox
\end{proof}
\\[5 pt]

\begin{prop}
An averaging homomorphism $\varphi: (A, f) \rightarrow (B,g)$ is
also a Lie algebra homomorphism, i.e. (use the notations mentioned
above)
\[ \varphi([x,y]_{f}) = [\varphi(x), \varphi(y)]_{g} \]
\end{prop}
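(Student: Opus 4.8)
The plan is to prove the identity by a direct computation that relies on nothing more than the two defining properties of an averaging homomorphism: that $\varphi$ is an $R$-algebra homomorphism (so it is $R$-linear and multiplicative) and that it intertwines the two averaging operators, i.e. $\varphi \circ f = g \circ \varphi$. Since the induced bracket $[x,y]_f = xf(y) - yf(x)$ is assembled entirely out of the multiplication of $A$ and the operator $f$, a map that respects both of these ingredients should automatically respect the bracket.

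First I would expand the left-hand side using the definition of the induced bracket, writing $\varphi([x,y]_f) = \varphi(xf(y) - yf(x))$. Next I would apply the $R$-linearity of $\varphi$ to split this difference as $\varphi(xf(y)) - \varphi(yf(x))$, and then use multiplicativity of $\varphi$ to obtain $\varphi(x)\,\varphi(f(y)) - \varphi(y)\,\varphi(f(x))$. Up to this point only the algebra-homomorphism half of the hypothesis has been used.

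The decisive step is to invoke the intertwining condition $\varphi \circ f = g \circ \varphi$, replacing $\varphi(f(y))$ by $g(\varphi(y))$ and $\varphi(f(x))$ by $g(\varphi(x))$. This turns the expression into $\varphi(x)\,g(\varphi(y)) - \varphi(y)\,g(\varphi(x))$, which is precisely $[\varphi(x),\varphi(y)]_g$ by the definition of the induced bracket on $(B,g)$, completing the proof.

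There is no genuine obstacle here; the computation is routine. The one point worth emphasizing is that both halves of the definition of an averaging homomorphism are needed: the algebra-homomorphism property to move $\varphi$ through sums, differences, and products, and the commutation $\varphi \circ f = g \circ \varphi$ to trade $\varphi \circ f$ for $g \circ \varphi$. Omitting either half would break the chain of equalities, so the argument is essentially just the observation that these two structural compatibilities together force compatibility with $[\,,\,]_f$.
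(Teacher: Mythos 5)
Your proof is correct and follows exactly the same computation as the paper: expand $[x,y]_f$, use linearity and multiplicativity of $\varphi$, then apply $\varphi\circ f = g\circ\varphi$ to arrive at $[\varphi(x),\varphi(y)]_g$. Nothing further is needed.
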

\begin{proof}
For any $x, y \in A$, we have
\begin{eqnarray*}
&& \varphi([x,y]_{f}) = \varphi( xf(y) - yf(x))
\\ &&= \varphi(x) \varphi(f(y)) - \varphi(y) \varphi(f(x))
\\ &&= \varphi(x) g(\varphi(y)) - \varphi(y) g(\varphi(x))
\\ && = [\varphi(x), \varphi(y)]_{g}.
\end{eqnarray*}
Therefore, $\varphi$ is also a Lie algebra homomorphism. \wbox
\end{proof}

\subsection{Solvable and nilpotent induced Lie algebras}
For a Lie algebra $L$, define
\begin{eqnarray*}
&& L^{(0)} = L, \\ && L^{(k)}= [L^{(k-1)},L^{(k-1})], \; for\;  k
> 0.
\end{eqnarray*}
 $L^{(k)}$, $k \geq 0$ are ideals, and
   $L^{(k)} \subseteq L^{(k-1)}$,  $k >0$.
   $L$ is called {\em solvable} if $L^{(k)}=0$ for
some $k >0 $.

Another sequence of ideals are  defined as
\begin{eqnarray*}
&& L^{1} = L, \\ && L^{k}= [L^{k-1},L], \; for \;  k > 1.
\end{eqnarray*}
$L^{k}$, $k > 0$ are ideals, and $L^{k} \subseteq L^{k-1} $,
 $k > 1$. $L$ is said to be {\em nilpotent} if $L^{k} = 0$ for some $k > 1$.
 The {\em nilpotent radical } of $L$ is the unique maximal
 nilpotent ideal ( if any ) of $L$, denoted by $Nr(L)$. $L$ is
 nilpotent if and only if $Nr(L) = L$.

\begin{prop}
Let $f$ be an averaging operator on  $A$. Then (i) $A_{f}$ is
solvable of length 2, i.e. $A_{f}^{(2)}=0$. (ii)  $A_{f}$ is
nilpotent if and only if
 \[ f(A)^{k} \subseteq \{ a \in A : aA_{f}^{2}=0 \}\]for some $k >0$.
\label{prop3}
\end{prop}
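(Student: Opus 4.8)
The plan is to extract one structural fact about $f$ that powers both parts: $f$ sends every commutator to $0$, and the bracket $[\,,\,]_f$ vanishes identically on $\ker f$. I would record this as a lemma first. For $x,y\in A$,
\[ f([x,y]_f)=f(xf(y))-f(yf(x))=f(x)f(y)-f(y)f(x)=0, \]
where the middle equality is the averaging identity and the last uses commutativity of $A$; hence $[A,A]_f\subseteq\ker f$. If moreover $a,b\in\ker f$, then $[a,b]_f=af(b)-bf(a)=0$, so $[\,,\,]_f$ is identically zero on $\ker f$.

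With this, part (i) is immediate. Since $A_f^{(1)}=[A_f,A_f]=[A,A]_f\subseteq\ker f$ and the bracket vanishes on $\ker f$, we get $A_f^{(2)}=[A_f^{(1)},A_f^{(1)}]=0$, so $A_f$ is solvable of length $2$.

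For part (ii) I would compute the lower central series. First, every term $A_f^m$ with $m\ge 2$ sits inside $\ker f$: we have $A_f^2=[A,A]_f\subseteq\ker f$, and if $A_f^m\subseteq\ker f$ then for $u\in A_f^m$, $z\in A$ one has $[u,z]_f=uf(z)-zf(u)=uf(z)$, so $f([u,z]_f)=f(u)f(z)=0$ and $A_f^{m+1}\subseteq\ker f$. The same identity $[u,z]_f=uf(z)$ for $u\in\ker f$ shows $A_f^{m+1}=[A_f^m,A]_f=A_f^m\cdot f(A)$ (the $R$-span of the products) for all $m\ge 2$. Iterating yields
\[ A_f^{2+j}=A_f^2\cdot f(A)^{j}\qquad(j\ge 0), \]
with $j=0$ giving $A_f^2$ itself.

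It remains to translate nilpotency. By definition $A_f$ is nilpotent iff $A_f^N=0$ for some $N\ge 2$, i.e. $A_f^2\cdot f(A)^{j}=0$ for some $j\ge 0$. Writing $\{a\in A:aA_f^2=0\}$ for the annihilator of $A_f^2$ in $A$, the equation $A_f^2\cdot f(A)^{k}=0$ is precisely $f(A)^{k}\subseteq\{a\in A:aA_f^2=0\}$. Thus the stated condition (for some $k>0$) is equivalent to $A_f^{2+k}=0$ for some $k\ge 1$, i.e. $A_f^N=0$ for some $N\ge 3$; and since $A_f^2=0$ forces $A_f^3=0$, this is the same as nilpotency. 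The computations are routine, and the only delicate point — which I would flag as the main obstacle — is this last bookkeeping step, namely reconciling the ``$k>0$'' in the statement with the nilpotency index by absorbing the boundary case $A_f^2=0$.
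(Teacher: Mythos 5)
Your proposal is correct and follows essentially the same route as the paper: part (i) from the fact that $f$ annihilates all brackets, and part (ii) from the iterated-bracket identity $[\ldots[[x_1,x_2]_f,x_3]_f,\ldots,x_k]_f=[x_1,x_2]_f f(x_3)\cdots f(x_k)$, which is exactly your $A_f^{2+j}=A_f^2\cdot f(A)^j$. You are somewhat more explicit about the induction and the boundary case $A_f^2=0$, but the substance is identical.
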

\begin{proof}
(i) For any $a_{1}, a_{2} \in A$, \[ f([a_{1}, a_{2}]_{f}) =
f(a_{1}f(a_{2}) - a_{2}f(a_{1})) = 0, \] therefore \[
[[x,y]_{f},[w,v]_{f}]_{f} = [x,y]_{f}f([w,v]_{f}) -
[w,v]_{f}f([x,y]_{f}) =0 \] for all $ x,y,w,v \in A$, i.e.
$A_{f}^{(2)}=0$. (ii) Computation shows that for $k > 2$ and
 $x_{1}, x_{2},..., x_{k} \in A$
\[ \underbrace{[...[[}_{k-1}x_{1}, x_{2}]_{f},x_{3}]_{f}, ..., x_{k}]_{f} =
[x_{1}, x_{2}]_{f}f(x_{3})...f(x_{k}) \] holds, hence
  $A_{f}^{k} =0$ if and only if
$ f(A)^{k-2} \subseteq \{ a \in A : aA_{f}^{2}=0 \}$.
\wbox
\end{proof}

\begin{prop}
Let $A$ be a domain and $f$ be an averaging operator on $A$.

 (i) If $ker(f) =0$, then $Nr(A_{f}) =A_{f}$.

 (ii) If $ker(f) \neq 0$, then $Nr(A_{f}) = ker(f)$.
\label{NrProp}
\end{prop}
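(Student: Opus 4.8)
The plan is to run both cases off the single identity $f([x,y]_f)=0$ for all $x,y\in A$, which was already verified inside the proof of Proposition~\ref{prop3}(i), together with the fact that $A$ has no zero divisors. For part (i), if $ker(f)=0$ I would simply observe that $f([x,y]_f)=0$ and the injectivity of $f$ force $[x,y]_f=0$ for all $x,y$, so that $A_f$ is abelian and hence nilpotent. A nilpotent Lie algebra is its own maximal nilpotent ideal, so $Nr(A_f)=A_f$.

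For part (ii) the first step is to check that $ker(f)$ is itself a nilpotent ideal of $A_f$. It is a Lie ideal, since for $k\in ker(f)$ and $x\in A$ we have $[x,k]_f=-k\,f(x)\in ker(f)$ by the earlier proposition $f(A)\cdot ker(f)\subseteq ker(f)$; and it is abelian, since $[k_1,k_2]_f=k_1f(k_2)-k_2f(k_1)=0$. Thus $ker(f)$ is nilpotent, and the entire content of the statement is the \emph{maximality}: every nilpotent ideal of $A_f$ must lie inside $ker(f)$.

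This maximality is the step I expect to be the main obstacle. Let $J$ be a nilpotent ideal and suppose, for contradiction, that $J\not\subseteq ker(f)$; pick $a\in J$ with $b:=f(a)\neq0$. For a fixed but arbitrary $x\in A$ set $w_1=[x,a]_f$ and $w_{j+1}=[w_j,a]_f$. Since $J$ is an ideal, $w_1\in J$, and each $w_j$ lies in the $j$-th term $J^{j}$ of the lower central series of $J$. Because $f([x,a]_f)=0$ and $f(A)\cdot ker(f)\subseteq ker(f)$, every $w_j$ lies in $ker(f)$, so $f(w_j)=0$ and hence $[w_j,a]_f=w_jf(a)-af(w_j)=w_jb$. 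A short induction then gives
\[
w_j=[x,a]_f\,b^{\,j-1}.
\]
As $J$ is nilpotent, $J^{m}=0$ for some $m$, so $[x,a]_f\,b^{\,m-1}=0$; since $A$ is a domain and $b\neq0$, this yields $[x,a]_f=0$ for all $x\in A$.

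Finally I would extract the contradiction from $[x,a]_f=xf(a)-af(x)=0$. Taking $x=1_A$ gives $b=a\,f(1_A)$, and then $a\,(f(1_A)x-f(x))=0$ for all $x$; since $a\neq0$ (because $b\neq0$) and $A$ is a domain, $f(x)=f(1_A)\,x$ is scalar multiplication with $f(1_A)\neq0$, whence $ker(f)=0$, contradicting the hypothesis $ker(f)\neq0$. Therefore $J\subseteq ker(f)$, and since $ker(f)$ is itself a nilpotent ideal it is the unique maximal one, giving $Nr(A_f)=ker(f)$. The only delicate point is the bookkeeping that the telescoping bracket $w_j$ genuinely sits in $J^{j}$; once that is secured, the domain hypothesis forces everything else.
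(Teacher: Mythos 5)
Your proof is correct and follows essentially the same route as the paper's: the heart of both arguments is that every bracket lies in $\ker(f)$, so iterating the bracket with a fixed $a$ just multiplies by $f(a)$, and the domain hypothesis then forces $f(a)=0$ for any $a$ in a nilpotent ideal. The only substantive difference is that the paper invokes (citing Jacobson) the nilpotency of $ad_{f}(a)$ for $a\in Nr(A_{f})$ and splits into the cases $n=1$ and $n>1$, whereas you derive the needed vanishing directly from the lower central series of an arbitrary nilpotent ideal $J$ and reach the contradiction by showing $f$ would have to be injective multiplication by $f(1_{A})$ --- a slightly longer but self-contained endgame (the paper's shortcut is simply to plug a nonzero $x\in\ker(f)$ into $[a,x]_{f}=0$).
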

\begin{proof}
If $ker(f)=0$, then for all $x, y \in A$, $[x, y]_{f}=0$,  since
\[ f([x,y]_{f}) = f(x)f(y) - f(y)f(x) = 0. \]
Therefore $A_{f}^{2} = 0$ and $Nr(A_{f}) = A$.

If $ker(f) \neq 0$, since clearly it is a nilpotent ideal of
$A_{f}$, we have $ker(f)\subseteq Nr(A_{f})$.
 Now take $0 \neq a \in Nr(A_{f})$. There exists an integer $n > 0$ such that
 $(ad_{f}(a))^{n} = 0$, $(ad_{f}(a))^{n-1} \neq 0$.  \cite{Jac}

If $n = 1$, take $0 \neq x \in ker(f)$,
\begin{eqnarray*}
&& 0=ad_{f}(a)(x) = [a,x ]_{f}
\\ && = af(x) - xf(a)
\\ && = -xf(a),
\end{eqnarray*}
therefore $f(a) = 0$ and $a \in ker(f)$.

If $n > 1$, take  $0 \neq x \in A$ such that
 $(ad_{f}(a))^{n-1}(x) \neq 0$. We have
\begin{eqnarray*}
&& 0 = (ad_{f}(a))^{n}(x)
\\&& = [a, (ad_{f}(a))^{n-1}(x)]_{f}
\\ && = -(ad_{f}(a))^{n-1}(x)f(a),
\end{eqnarray*}
therefore $f(a) = 0$ and $a \in ker(f)$. \wbox
\end{proof}

\begin{prop}
Let $A$ be an $R$-algebra without zero divisor, and $f \in
\Avg(A)$. The following are equivalent.

(i) $A_{f}$ is nilpotent;

(ii) $ker(f) = 0$ or $ker(f) = A$.

(iii)$f(x) = xt, x \in A$ for a fixed $t \in A$.

(iv) $A_{f}^{2} = 0$. \label{NrProp2}
\end{prop}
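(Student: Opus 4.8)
The plan is to arrange the four conditions into the cycle (iv) $\Rightarrow$ (i) $\Rightarrow$ (ii) $\Rightarrow$ (iv), and to obtain (iii) as a direct reformulation of (iv). The engine behind everything is a single identity, already exploited in the proof of Proposition~\ref{prop3}(i): for any averaging operator on a commutative algebra,
\[ f([x,y]_f) = f(xf(y)) - f(yf(x)) = f(x)f(y) - f(y)f(x) = 0, \]
so that $A_f^2 = [A,A]_f \subseteq ker(f)$, with no use of the domain hypothesis. I would isolate this inclusion at the outset, since it renders two of the implications immediate.

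Running the cycle, (iv) $\Rightarrow$ (i) is trivial: $A_f^2 = 0$ is the definition of nilpotency with $k = 2$. For (i) $\Rightarrow$ (ii) I would pass through the nilpotent radical: nilpotency of $A_f$ means $Nr(A_f) = A_f = A$, and since $A$ is a domain Proposition~\ref{NrProp} applies --- if $ker(f) \neq 0$ then $Nr(A_f) = ker(f)$, forcing $ker(f) = A$, while if $ker(f) = 0$ we are already in case (ii). For (ii) $\Rightarrow$ (iv) I would split into the two alternatives: if $ker(f) = 0$ the inclusion $A_f^2 \subseteq ker(f)$ collapses to $A_f^2 = 0$, and if $ker(f) = A$ then $f = 0$, so $[x,y]_f = xf(y) - yf(x) = 0$ and again $A_f^2 = 0$.

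Finally, (iii) $\Leftrightarrow$ (iv) is a restatement of the characterization established during the proof of the earlier proposition on when two endomorphisms induce the same bracket: an $R$-module endomorphism satisfies $xf(y) - yf(x) = 0$ for all $x,y$ if and only if $f(x) = xt$ for a fixed $t \in A$ (set $y = 1_A$ for one direction, and use commutativity for the other). Condition (iv) asserts exactly that $[x,y]_f = xf(y) - yf(x)$ vanishes identically, and is therefore equivalent to (iii).

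The only genuinely substantive step is (i) $\Rightarrow$ (ii), which is where the hypothesis that $A$ has no zero divisors enters, through Proposition~\ref{NrProp}; every other implication is formal manipulation of the bracket. The point to watch is the boundary case $ker(f) = A$: Proposition~\ref{NrProp} is phrased in terms of $ker(f) = 0$ versus $ker(f) \neq 0$, so the value $ker(f) = A$ must be recognized directly as the zero operator rather than read off from that statement. I would verify that this borderline case is handled consistently in each implication where it arises.
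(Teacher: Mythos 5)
Your proof is correct and uses the same essential ingredients as the paper's: Proposition~\ref{NrProp} for the implication from nilpotency to the dichotomy on $ker(f)$, the identity $f([x,y]_f)=0$ (hence $[A,A]_f\subseteq ker(f)$) for the converse direction, and the specialization $y=1_A$ to extract $f(x)=xt$. The only difference is cosmetic --- the paper chains (i)$\Rightarrow$(ii)$\Rightarrow$(iii)$\Rightarrow$(iv)$\Rightarrow$(i) while you run (iv)$\Rightarrow$(i)$\Rightarrow$(ii)$\Rightarrow$(iv) and attach (iii) as an equivalent of (iv) --- so the two arguments are essentially the same.
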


\begin{proof}
$(i) {\Rightarrow} (ii)$. If $A_{f}$ is nilpotent and $ker(f) \neq
0$ then $A = Nr(A_{f}) = ker(f)$ by proposition {\ref {NrProp}}.

$(ii) {\Rightarrow} (iii)$. If $ker(f) = A$, $f(x) =0 =  x0$, for
all $x \in A$. If $ker(f) = 0$, take $t = f(1)$, and
 \[f([1, x]_{f}) = f(f(x)- x(f(1)) = f(1)f(x)- f(x)f(1) = 0 \]
implies that
\[ 0 = [1, x]_{f} = f(x) - xf(1) = f(x) - xt. \]

$(iii) {\Rightarrow} (vi)$. For all $x, y \in A$,
\[ [x , y]_{f} = xf(y) - yf(x) = xyt - yxt = 0. \]
therefore $A_{f}^{2} = 0$.

$(vi) {\Rightarrow} (i)$. Trivial. So we have proved the
proposition. \wbox
\end{proof}

\subsection{Eigenvalues and eigenvectors of $ad_{f}(a)$ }
>From now on, we require that the ring $R$ is a field. So the $R$
-algebra $A$ is a vector space, and  we can consider the
eigenvalues, eigenvectors, and matrices of relevant $R$-linear
maps, etc. For a $R$-linear map $f$ and its eigenvalue $k \in R$,
we use the notation
\[ V_{k}^{f} = \{ x \in A: f(x) = kx \} \]
to denote the eigenspace of  $k$.

Let $f$ be an averaging operator on  $A$. For each $a \in A$, let
$ad_{f}(a)$ be the $R$-module endomorphism on  $A$ such that
$ad_{f}(a)(x)= [a, x]_{f}, x \in A$. $ad_{f}(a)$ is a derivation
on  $A_{f}$, and  $ad_{f}: A_{f} \rightarrow End_{R}(A)$ is a Lie
algebra homomorphism. {\cite {Jac}} {\cite {Kna}}

If $R$ is a field, $A$ is an $R$-algebra, and $f \in \Avg(A)$,
then for each $a \in A$, $0$ is an eigenvalue of $ad_{f}(a)$: if
$a = 0$, $ad_{f}(a)$ is the zero map; if $a \neq 0$, then
$ad_{f}(a)(a) = 0 = 0a$. So we only need to consider nonzero
eigenvalues for $ad_{f}(a)$.

\begin{prop}
Let $R$ be a field, $A$ be an $R$-algebra without zero divisor,
and $f \in \Avg(A)$. For each $0 \neq a \in A$ :

(i) if $ker(f) = 0$,  then $0$ is the unique eigenvalue of
 $ad_{f}(a)$,  and   $V_{0}^{ad_{f}(a)} = A$.

(ii) if $ker(f) \neq 0$, and $f(a) \neq k1_{A}$  for any
 $0 \neq k \in R$, then  $ad_{f}(a)$  has no nonzero eigenvalues.

(iii) if $ker(f) \neq 0$,  and  $f(a)= k1_{A}$  for some
 $0 \neq k \in R$,  then  $r = - k $  is the unique nonzero eigenvalue
of $ad_{f}(a)$,  and  $V_{r}^{ad_{f}(a)} = ker(f)$.

Hence $ad_{f}(a)$ has at most two eigenvalues.
 \label{EigProp1}
\end{prop}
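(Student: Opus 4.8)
The plan is to analyze the eigenvalue equation $ad_{f}(a)(x) = af(x) - xf(a) = rx$ directly, splitting into the three cases according to the size of $ker(f)$ and relying on the averaging identity $f(uf(v)) = f(u)f(v)$ together with the absence of zero divisors.

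For part (i), when $ker(f) = 0$, I would first show that $ad_{f}(a)$ is the zero operator. Indeed, the averaging identity gives $f(af(x)) = f(a)f(x) = f(x)f(a) = f(xf(a))$, so $f([a,x]_{f}) = f(af(x) - xf(a)) = 0$ for every $x$; hence $[a,x]_{f} \in ker(f) = 0$. (This is also immediate from Proposition \ref{NrProp}, whose proof shows $A_{f}^{2} = 0$ precisely when $ker(f) = 0$.) Thus $ad_{f}(a)(x) = 0 = 0\cdot x$ for all $x$, so $0$ is the only eigenvalue and $V_{0}^{ad_{f}(a)} = A$.

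For parts (ii) and (iii) I assume $ker(f) \neq 0$ and take a candidate nonzero eigenvalue $r \neq 0$ with eigenvector $x \neq 0$, so $af(x) - xf(a) = rx$. The crucial step is to apply $f$ to this identity: as before the averaging identity forces $f(af(x)) = f(a)f(x) = f(xf(a))$, so the left side vanishes and we are left with $rf(x) = 0$. Since $R$ is a field and $r \neq 0$, this yields $f(x) = 0$, i.e. every eigenvector attached to a nonzero eigenvalue lies in $ker(f)$; in particular $V_{r}^{ad_{f}(a)} \subseteq ker(f)$. Substituting $f(x) = 0$ back into the eigenvalue equation gives $-xf(a) = rx$, that is $x(f(a) + r1_A) = 0$. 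Here the hypothesis that $A$ has no zero divisors is essential: since $x \neq 0$ it forces $f(a) = -r1_A$.

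This already proves (ii): if $f(a) \neq k1_A$ for every nonzero $k \in R$, no such $r$ can exist, so $ad_{f}(a)$ has no nonzero eigenvalue. For (iii), the relation $f(a) = -r1_A$ shows the only possible nonzero eigenvalue is $r = -k$ when $f(a) = k1_A$; conversely, for this $r$ and any $x \in ker(f)$ one computes $ad_{f}(a)(x) = af(x) - xf(a) = -x(k1_A) = -kx = rx$, so every element of $ker(f)$ is an eigenvector. Since $ker(f) \neq 0$, the value $r = -k$ is genuinely attained, and combined with the inclusion of the previous paragraph this gives $V_{r}^{ad_{f}(a)} = ker(f)$. The final clause follows because $0$ is always an eigenvalue (as $ad_{f}(a)(a) = [a,a]_{f} = 0$ with $a \neq 0$), so the full eigenvalue set is $\{0\}$ in cases (i) and (ii) and $\{0, -k\}$ in case (iii), hence at most two. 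I do not anticipate a genuine obstacle here; the only point requiring care is matching the two inclusions that pin down the eigenspace, and the whole argument hinges on the single observation that applying $f$ annihilates the bracket.
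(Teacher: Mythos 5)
Your proposal is correct and follows essentially the same route as the paper: part (i) via $A_f^2=0$ when $\ker(f)=0$, and parts (ii)--(iii) by applying $f$ to the eigenvalue equation to force $f(x)=0$, then using the absence of zero divisors to pin down $f(a)=-r1_A$, with the converse inclusion $\ker(f)\subseteq V_r^{ad_f(a)}$ checked directly. No gaps.
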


\begin{proof}
(i) If $ker(f) = 0$  then $A_{f}^{2} = 0$ by Proposition
{\ref{NrProp2}}. Therefore  $ad_{f}(a)(x) = 0$ for all $x \in A$.

(ii) If there were $0 \neq k \in R$  and  $0 \neq x \in A$, such
that  $ad_{f}(a)(x) = kx$,  then
\[ kf(x) = f(ad_{f}(a)(x)) = f([a, x]_{f}) = 0,  \]
hence $f(x) = 0$, and
\[ -xf(a) = af(x) - xf(a) = kx. \]
We would have $f(a) = - k1_{A}$,  a contradiction.

(iii) If $ad_{f}(a)(x) = rx$ for some $0 \neq r \in R$  and
  $0 \neq x \in A$, by doing the same thing as in the proof of (ii)
  we have $f(x) = 0$, and
\[ -kx = -xf(a) = af(x) - xf(a) = rx,  \]
hence $r = -k$ and $x \in ker(f)$.  It is also clear that for any
$x \in ker(f)$, we have  $ad_{f}(a)(x) = -xf(a) = rx$.
\wbox
\end{proof}

\subsection{The kernel of an averaging operator}
We end this thesis with a discussion of the kernel of averaging
operators. We know that for an averaging operator $f$ on an
$R$-algebra $A$, $[A, A]_f \subseteq ker(f)$. It is not true in
general that $ker(f) = [A,A]_f$ for an averaging operator $f$ on
an $R$-algebra $A$.

\begin{ex}
Let $R = {\bf{Z}}$,  the ring of integers and let $A = {\bf{Z}}/6
{\bf{Z}}$.  Define $f(a) = 2a$ for  $a \in A$. $f$ is an averaging
operator. We have $[A, A]_f = 0$, but $ker(f) \neq 0$.
\end{ex}

We want to know when $ker(f) = [A,A]_f$ holds.

\begin{prop}
If $(A, f)$ is a unitary averaging algebra, then $ker(f) =
[A,A]_f$.
\end{prop}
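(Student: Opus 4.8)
The plan is to prove the two inclusions $[A,A]_f \subseteq ker(f)$ and $ker(f) \subseteq [A,A]_f$ separately. The first inclusion holds for every averaging operator, unitary or not, and has already been used implicitly in earlier proofs: for any $x, y \in A$ one computes
\[ f([x,y]_f) = f(xf(y)) - f(yf(x)) = f(x)f(y) - f(y)f(x) = 0, \]
where the final equality uses that $A$ is commutative. Hence each generator $[x,y]_f$ lies in $ker(f)$, and so does the $R$-submodule $[A,A]_f$ that they generate, since $ker(f)$ is itself a submodule.

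For the reverse inclusion I would exploit unitarity by bracketing against the identity element. Given $a \in ker(f)$, compute
\[ [1_A, a]_f = 1_A f(a) - a f(1_A) = f(a) - a f(1_A). \]
Since $f(a) = 0$ and $f(1_A) = 1_A$ by the unitary hypothesis, this collapses to $[1_A, a]_f = -a$, so that $a = [a, 1_A]_f \in [A,A]_f$. This establishes $ker(f) \subseteq [A,A]_f$, and combining it with the first inclusion yields the claimed equality.

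There is essentially no obstacle here: the whole content is the single bracket identity $[1_A, a]_f = -a$, which is exactly the point at which the condition $f(1_A) = 1_A$ enters, and which fails in general. Indeed the example $A = \mathbf{Z}/6\mathbf{Z}$ with $f(a) = 2a$ preceding this proposition is nonunitary, since there $f(1_A) = 2 \neq 1_A$, and correspondingly $[A,A]_f = 0$ while $ker(f) \neq 0$. The only bookkeeping point is to keep in mind that $[A,A]_f$ denotes the $R$-submodule generated by all the brackets, so that exhibiting each $a \in ker(f)$ as one individual bracket already gives the submodule inclusion.
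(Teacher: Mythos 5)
Your proof is correct and uses exactly the paper's argument: the paper also writes $a = af(1_A) - 1_Af(a) = [a,1_A]_f$ for $a \in \ker(f)$, relying on the previously noted inclusion $[A,A]_f \subseteq \ker(f)$. Your version merely spells out that standing inclusion explicitly, which the paper states just before the proposition.
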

\begin{proof}
Let $a \in ker(f)$, then $ a = af(1_A) - 1_Af(a) = [a, 1_A]_f \in
[A,A]_f$. \wbox
\end{proof}

\begin{prop}
Let $R$ be a field, $f$ be an averaging operator on an $R$-algebra
$A$. If  $f(1_A)$ is not a zero divisor and f(A) is of finite
dimension,  then $ker(f) = [A,A]_f$ holds.
\end{prop}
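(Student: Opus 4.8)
The inclusion $[A,A]_f \subseteq \ker f$ is already available (it was recalled just above the statement), so the plan is to prove only the reverse inclusion $\ker f \subseteq [A,A]_f$. Write $t = f(1_A)$. The guiding idea is that the non-zero-divisor hypothesis together with $\dim_R f(A) < \infty$ should force $t$ to be invertible in $A$, with its inverse lying in the subalgebra $f(A)$; once that is in hand, a short ``$f(A)$-linearity'' property of $[A,A]_f$ closes the argument.

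First I would record that $[A,A]_f$ is closed under multiplication by elements of $f(A)$. For $c = f(w) \in f(A)$ the averaging identity gives $f(ca) = f(af(w)) = f(a)f(w) = c\,f(a)$, whence $c[a,b]_f = caf(b) - cbf(a) = caf(b) - bf(ca) = [ca,b]_f$. Thus $f(A)\cdot[A,A]_f \subseteq [A,A]_f$.

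Next comes the crux: showing $t$ is invertible. Recall $f(A)$ is a finite-dimensional $R$-subalgebra (closed under multiplication by the earlier Proposition), and since $t \in f(A)$, multiplication by $t$ maps $f(A)$ into itself. This $R$-linear map $m_t$ is injective because $t$ is not a zero divisor, hence bijective by finite-dimensionality. Surjectivity produces some $u \in f(A)$ with $tu = t$; then from $t(uv) = (tu)v = tv$ and cancellation of the non-zero-divisor $t$ we get $uv = v$ for all $v \in f(A)$, so $u$ is a multiplicative identity of $f(A)$. Applying the same cancellation to $t(ua) = (tu)a = ta$ for an arbitrary $a \in A$ yields $ua = a$ for every $a$, so $u = 1_A$; in particular $1_A \in f(A)$, and surjectivity of $m_t$ now gives $s \in f(A)$ with $ts = 1_A$.

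Finally, take $a \in \ker f$. Since $f(a) = 0$, we have $[a,1_A]_f = af(1_A) - 1_Af(a) = at$, so $at \in [A,A]_f$. As $st = 1_A$, one computes $s(at) = (st)a = a$, and since $s \in f(A)$ while $at \in [A,A]_f$, the stability property gives $a = s(at) \in [A,A]_f$. This establishes $\ker f \subseteq [A,A]_f$, and together with the known reverse inclusion we conclude $\ker f = [A,A]_f$. The one genuinely delicate point is the invertibility of $t$: a priori $1_A$ need not lie in $f(A)$, so one cannot merely invert $t$ inside a unital finite-dimensional algebra. The bootstrap through the identity $u$ of $f(A)$, combined with the non-zero-divisor hypothesis to identify $u$ with $1_A$, is exactly what the finiteness and non-zero-divisor assumptions are for, and it is the step I would expect to require the most care.
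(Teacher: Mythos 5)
Your proof is correct and rests on the same key idea as the paper's: multiplication by $f(1_A)$ is an injective, hence bijective, $R$-linear endomorphism of the finite-dimensional space $f(A)$, which forces $1_A \in f(A)$. The paper then finishes more directly --- writing $1_A = f(b)$ and observing $a = af(b) - bf(a) = [a,b]_f$ for $a \in \ker(f)$ --- so your extra detour through the inverse $s$ and the $f(A)$-stability of $[A,A]_f$ is sound but unnecessary.
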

\begin{proof}
Let $a_1, ..., a_k$ be a basis of $f(A)$, where $k = dim(f(A))$.
Note that $f(1_R)a_1, ..., f(1_R)a_k$ is also a basis of $f(A)$,
since $f(1_R)$ is not a zero divisor. There exist $r_1, ... , r_k
\in R$ such that
\[f(1_R) = \sum_{ 1 \leq i \leq k}r_if(1_R)a_i = f(1_R)\sum_{ 1 \leq i \leq
k}r_ia_i.\] \noindent Therefore $ 1_A = \sum_{ 1 \leq i \leq
k}r_ia_i \in f(A)$. There exist $b \in A$ such that $1_A = f(b)$.
Take $ a \in ker(f)$, then $ a = af(b)-bf(a) = [a, b]_f \in [A,
A]_f$. \wbox
\end{proof}

\begin{prop}
\label{prop:ker1}
Let $X$ be a set, $(F_X, f_X)$ be the free averaging $R$-algebra
on $X$. Then $ker(f_X) = [F_X, F_X]_{f_X}$.
\end{prop}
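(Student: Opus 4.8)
The plan is to prove the two inclusions separately. One direction, $[F_X,F_X]_{f_X}\subseteq\ker f_X$, is already in hand: for any averaging operator one has $f([a,b]_f)=f(a)f(b)-f(b)f(a)=0$ by the averaging identity together with commutativity, so every bracket, and hence the $R$-submodule they generate, lies in the kernel. This is exactly the general fact $[A,A]_f\subseteq\ker f$ recalled just before the statement. All the work is therefore in the reverse inclusion $\ker f_X\subseteq[F_X,F_X]_{f_X}$.

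To establish it I would first pin down a convenient $R$-basis of $F_X=R[X\cup Y]$ and compute $f_X$ on it. Since $X$ and $Y$ are disjoint, every monomial of $F_X$ factors uniquely as $M=uv$ with $u\in\Theta(X)$ its $X$-part and $v$ a monomial in the $Y$-variables, and these monomials form an $R$-basis. On such an $M$ one has $f_X(M)=y_u v$, again a $Y$-monomial with coefficient $1_R$. Thus $f_X$ sends each basis monomial to a single $Y$-monomial (never to $0$), so I would group the basis monomials by their image: for a $Y$-monomial $w$ set $S_w=\{M : f_X(M)=w\}$. Writing $w=y_{\theta_1}\cdots y_{\theta_n}$, a monomial $uv$ lies in $S_w$ exactly when $y_u$ is one of the factors of $w$ and $v=w/y_u$; hence $S_w=\{\theta\,(w/y_\theta): y_\theta\mid w\}$, indexed by the distinct $Y$-variables dividing $w$. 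On $\mathrm{span}(S_w)$ the operator $f_X$ is the map $\sum_{M\in S_w}c_M M\mapsto(\sum_M c_M)\,w$, whose kernel is spanned by the differences $M-M'$ with $M,M'\in S_w$. As distinct $w$ are linearly independent in $R[Y]$, the fibers contribute independently, so $\ker f_X=\bigoplus_w \ker(f_X|_{\mathrm{span}(S_w)})$ is spanned by all differences $M-M'$ of monomials with $f_X(M)=f_X(M')$.

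It then remains to realize each such difference as a bracket. Take $M,M'\in S_w$, say $M=\theta_i(w/y_{\theta_i})$ and $M'=\theta_j(w/y_{\theta_j})$. If $\theta_i=\theta_j$ the difference is $0$; otherwise $y_{\theta_i}$ and $y_{\theta_j}$ are distinct factors of $w$, so $w=y_{\theta_i}y_{\theta_j}s$ for the $Y$-monomial $s=w/(y_{\theta_i}y_{\theta_j})$, giving $M=\theta_i y_{\theta_j}s$ and $M'=\theta_j y_{\theta_i}s$. I would then check directly that $[\theta_i s,\ \theta_j]_{f_X}=M-M'$: since $f_X(\theta_i s)=y_{\theta_i}s$ and $f_X(\theta_j)=y_{\theta_j}$, we get $[\theta_i s,\theta_j]_{f_X}=(\theta_i s)\,y_{\theta_j}-\theta_j\,(y_{\theta_i}s)=\theta_i y_{\theta_j}s-\theta_j y_{\theta_i}s=M-M'$. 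Hence every spanning element of $\ker f_X$ is a single bracket, which yields $\ker f_X\subseteq[F_X,F_X]_{f_X}$ and, with the first inclusion, the desired equality.

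The main obstacle I anticipate is the bookkeeping in the middle paragraph: correctly describing the fibers $S_w$ of $f_X$ on monomials and arguing that the kernel of this coefficient-preserving monomial map is spanned by within-fiber differences. Once that structural description of $\ker f_X$ is secured, the identification of each fiber-difference with the explicit bracket $[\theta_i s,\theta_j]_{f_X}$ is a short computation.
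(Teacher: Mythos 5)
Your proposal is correct and follows essentially the same route as the paper's proof: group the basis monomials by their image under $f_X$ (your fibers $S_w$ are the paper's Step 2), observe that the kernel restricted to each fiber is spanned by differences $M-M'$ of monomials with the same image (the paper's Step 1, done there by a $k=2$ base case plus the telescoping $w=\sum r_i(w_i-w_k)$), and realize each such difference as an explicit bracket $[\theta_i s,\theta_j]_{f_X}$, which matches the paper's computation up to where the $Y$-factor $s$ is placed. Your direct-sum organization even absorbs the $X=\emptyset$ case that the paper treats separately, but the underlying argument is the same.
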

\begin{proof}
If $X = \emptyset$, then $F_X = R[y_{1_R}]$ and $f_X(u) =
y_{1_R}u$ for all $u \in F_X$. We have
  \[ ker(f_X) = {0} = [F_X, F_X]_{f_X}. \]
Now consider the case where $X \neq \emptyset$.
 We only need to show that
 \[ker(f_X) \subseteq [F_X, F_X]_{f_X}. \]

 Let $\displaystyle w = \sum_{1 \leq i \leq k}r_iw_i$ be an nonzero element of
 $ker(f_X)$, where $k > 0$, $w_i$ are distinct monomials of $F_X$
 with coefficient $1_R$
and all $r_i$ are nonzero elements of $R$.

\noindent
{\bf Step 1. }
We first assume that
 $f_X(w_1) = f_X(w_2) = ... = f_X(w_k)$.
Note that under this assumption,  we have $\Sigma_ir_i = 0$, since
$f_X(w) = (\Sigma_ir_i)f_X(w_1) = 0$ and $f_X(w_1) \neq 0$.  All
$u_i$ are distinct, and all $v_i$ are distinct.
 We will show that $w \in [F_X, F_X]_{f_X}$.

 $k = 1$ is an impossible case, according to the definition of
 $f_X$.

 If $ k = 2$, then $ w = r_1u_1v_1 + r_2u_2v_2$, where $u_1, u_2
 \in \Theta(X)$, and $v_1, v_2 \in R[Y]$.
 We have  $r_2 = - r_1$ and $v_1 \neq v_2$. At least one of
 $ v_1$ and $v_2$ is not $1_R$. Assume that $v_1 =
 y_{\theta_1}...y_{\theta_p}$  for some $\theta_1, ..., \theta_p
 \in \Theta(X)$, $p > 0$. Then we have
 \begin{eqnarray*}
&& f_X(w) = r_1f_X(u_1v_1) - r_1f_X(u_2v_2)
\\ && = r_1y_{u_1}y_{\theta_1}...y_{\theta_p} - r_1y_{u_2}v_2 = 0.
 \end{eqnarray*}
 Therefore, without loss of generality (reorder  $y_{\theta_1},...,y_{\theta_p}$
 if necessary) , we can assume  $y_{\theta_1} = y_{u_2}$. Then
  \[\theta_1 = u_2,\]
 \[v_2 = y_{u_1}y_{\theta_2}...y_{\theta_p}.\]
Hence we have
 \begin{eqnarray*}
 &&  w = r_1(u_1v_1 - u_2v_2)
\\ && = r_1(u_1y_{\theta_1}...y_{\theta_p} -
\theta_1y_{u_1}y_{\theta_2}...y_{\theta_p})
 \\ && = r_1(u_1f_X(\theta_1y_{\theta_2}...y_{\theta_p}) -
 \theta_1y_{\theta_2}...y_{\theta_p}f_X(u_1))
 \\&& = [r_1u_1, \theta_1y_{\theta_2}...y_{\theta_p}]_{f_X}.
 \end{eqnarray*}
 Therefore $w \in [F_X, F_X]_{f_X}$.

If $k > 2$, then $r_k = -r_1 - r_2-...-r_{k-1}$, and
\[ w = r_1(w_1 - w_k) + r_2(w_2 - w_k) - ... - r_{k-1}(w_{k-1} -
w_k) .\]

According to the discussion for the case $k = 2$, we know for each
$i$, $1 \leq i \leq k-1$, We have $r_i(w_i - w_k) \in [F_X,
F_X]_{f_X}$, hence $ w \in [F_X, F_X]_{f_X}$.

\noindent
{\bf Step 2. }
Note that, by the definition of $f_X$, each $f_X(w_i)$
is a monomial of $F_X$ with coefficient $1_R$.
By rearranging $w_i$, we can assume that there is a
partition
\[ \{1,\ldots,n_1,n_1+1,\ldots,n_2,\ldots,
    n_{r-1}+1,\ldots,n_r=k\}\]
of $\{1\ldots,k\}$ such that
\[ f(w_i)=u_{j+1},\ {\rm\ for\ } n_{j}+1\leq i\leq n_{j+1},
    0\leq j\leq r-1 \]
(taking $n_0=0$)
and $u_1,\ldots,u_r$ are distinct monomials of
$F_X$ with coefficient $1_R$.
Then since $u_1,\ldots,u_r$ are distinct, they are
linearly independent. From $f_X(w)=0$, we get
\begin{eqnarray*}
\lefteqn{ 0 = \sum_{j=0}^{r-1} \sum_{i=n_j+1}^{n_{j+1}}
    r_i f(w_i)}\\
&=&\sum_{j=0}^{r-1} \sum_{i=n_j+1}^{n_{j+1}}
    r_i u_{j+1}.
\end{eqnarray*}
So
\[ \sum_{i=n_j+1}^{n_{j+1}} r_i u_{j+1}=0. \]
So
\[ \sum_{i=n_j+1}^{n_{j+1}} r_i f(w_i)=0 ,\ j=0,\ldots,r-1\]
with $f(w_{n_j+1})=\ldots = f(w_{n_{j+1}})$. Now by Step 1,
$\sum_{i=n_j+1}^{n_{j+1}} r_i w_i\in [F_X, F_X]_{f_X}$ for each
$j=0,\ldots,r-1$. Therefore, $w$ is in $[F_X, F_X]_{f_X}$. \wbox
\end{proof}

Let $S = \{s_j : j \in J\}$ be a subset of an averaging
$R$-algebra $(A, f)$, such that $S$ generates $(A,f)$ ( which
means the only averaging subalgebra of $(A,f)$ containing $S$ is
itself). Let $X = \{x_j : j \in J\}$ be a set. Define a map $\eta
: X \rightarrow S $ such that $\eta(x_j) = s_j$ for all $j \in J$.
There exists a unique averaging homomorphism $\varphi : (F_X, f_X)
\rightarrow (A,f)$ such that $\varphi \circ i_X = \eta$. We have
the following commutative diagram:
\[\begin{array}{ccc}
F_X & \ola{f_X} & F_X \\  \downarrow {\varphi} && \downarrow
{\varphi}\\ A & \ola {f} & A.
\end{array} \]

Note that $\varphi$ is surjective, since its image $\varphi(F_X)$
is an averaging subalgebra of $A$, and $\varphi(F_X)$ contains
$S$.
\begin{lemma}
\label{lem:ker}
With the notations above, the following are true.

(i)  $ker(f_X) \subseteq \varphi^{-1}(ker(f))$.

(ii)  $ ker(\varphi) \subseteq \varphi^{-1}(ker(f))$.

(iii)  $ker(f_X) + ker(\varphi) \subseteq \varphi^{-1}(ker(f))$.

(iv)  $f_X(ker(\varphi)) \subseteq ker(\varphi) \cap f_X(F_X)$.
\end{lemma}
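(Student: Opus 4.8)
The plan is to derive all four inclusions from the single defining relation of an averaging homomorphism, namely the commutativity $\varphi \circ f_X = f \circ \varphi$ displayed just before the lemma, together with the observation that $\varphi^{-1}(ker(f))$ is the preimage of an ideal under a ring homomorphism and is therefore itself an ideal (in particular an $R$-submodule) of $F_X$.

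First I would prove (i). Given $w \in ker(f_X)$, I compute $f(\varphi(w)) = \varphi(f_X(w)) = \varphi(0) = 0$ using the commutativity, so $\varphi(w) \in ker(f)$, which is exactly $w \in \varphi^{-1}(ker(f))$. Part (ii) is essentially set-theoretic: since $f(0)=0$ we have $0 \in ker(f)$, hence $\{0\} \subseteq ker(f)$, and therefore $ker(\varphi) = \varphi^{-1}(\{0\}) \subseteq \varphi^{-1}(ker(f))$. Part (iii) then follows formally from (i) and (ii): because $\varphi^{-1}(ker(f))$ is closed under addition, the sum of the two submodules $ker(f_X)$ and $ker(\varphi)$ is again contained in it.

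For (iv) I would take $w \in ker(\varphi)$. That $f_X(w) \in f_X(F_X)$ is immediate from $w \in F_X$. To see $f_X(w) \in ker(\varphi)$ I again invoke the commutativity: $\varphi(f_X(w)) = f(\varphi(w)) = f(0) = 0$. Hence $f_X(w)$ lies in both $ker(\varphi)$ and $f_X(F_X)$, giving the stated inclusion.

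The only place demanding a moment of care is (iii), where one must recall that $\varphi^{-1}(ker(f))$ is an ideal and hence closed under addition before concluding that it contains the sum $ker(f_X)+ker(\varphi)$. Beyond that, I expect no genuine obstacle: every step collapses to the averaging-homomorphism identity $\varphi \circ f_X = f \circ \varphi$ and elementary properties of preimages, so the proof should be short and purely formal.
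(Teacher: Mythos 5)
Your proposal is correct and follows essentially the same route as the paper: each inclusion is derived from the commutation relation $\varphi\circ f_X=f\circ\varphi$ together with elementary properties of preimages, with (iii) reduced to additivity exactly as the paper does by checking $\varphi(w+u)=\varphi(w)\in\ker(f)$. No gaps; the arguments match step for step.
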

\begin{proof}
(i)
\[
 ker(f_X) \subseteq ker(\varphi\circ f_X)
    =\ker (f\circ \varphi) = \varphi^{-1} (ker (f)).\]

(ii) Let $u \in ker(\varphi)$. Then $\varphi(u) = 0 \in ker(f)$.

(iii) Let $w \in ker(f_X)$, $u \in ker(\varphi)$, then
      $\varphi(w + u) = \varphi(w) + \varphi(u) = \varphi(w) \in
      ker(f)$. This proves (iii).

(iv)  Let   $u \in ker(\varphi)$. Since $\varphi(f_X(u)) =
f(\varphi(u)) = f(0) = 0$, $f_X(u) \in ker(\varphi)$. Hence
$f_X(ker(\varphi)) \subseteq ker(\varphi)$. $f_X(ker(\varphi))
\subseteq f_X(F_X) $ is clearly true.  \wbox
\end{proof}

\begin{prop}
\label{prop:ker2}
With the notations above, the following are equivalent.

(i) $ker(f) = [A,A]_f$.

(ii) $\varphi^{-1}(ker(f)) = ker(f_X) + ker(\varphi)$.

(iii) $ker(\varphi) \cap f_X(F_X) = f_X(ker(\varphi))$.

\end{prop}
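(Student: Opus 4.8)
The plan is to reduce all three statements to a single structural identity and then run a short cycle of implications off it. The pivotal observation, which I would establish first, is that $\varphi$ carries $ker(f_X)$ exactly onto $[A,A]_f$. Indeed, by Proposition~\ref{prop:ker1} we have $ker(f_X) = [F_X,F_X]_{f_X}$, and since every averaging homomorphism is in particular a Lie algebra homomorphism, $\varphi$ sends $[u,v]_{f_X}$ to $[\varphi(u),\varphi(v)]_f$; because $\varphi$ is surjective this gives $\varphi(ker(f_X)) = \varphi([F_X,F_X]_{f_X}) = [A,A]_f$. Everything else rests on this together with surjectivity of $\varphi$ (which yields $\varphi(\varphi^{-1}(ker(f))) = ker(f)$), the commutation $\varphi \circ f_X = f \circ \varphi$, and the one-sided inclusions already supplied by Lemma~\ref{lem:ker}.

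For (i)$\Leftrightarrow$(ii): assuming (ii), I would apply $\varphi$ to both sides and read off $ker(f) = \varphi(\varphi^{-1}(ker(f))) = \varphi(ker(f_X) + ker(\varphi)) = [A,A]_f + 0 = [A,A]_f$, giving (i). Conversely, assuming (i), the inclusion $\supseteq$ is Lemma~\ref{lem:ker}(iii); for $\subseteq$ take $u \in \varphi^{-1}(ker(f))$, so $\varphi(u) \in ker(f) = [A,A]_f = \varphi(ker(f_X))$, pick $w \in ker(f_X)$ with $\varphi(w) = \varphi(u)$, and write $u = w + (u-w)$ with $u-w \in ker(\varphi)$.

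To link (iii) I would close a short cycle. For (ii)$\Rightarrow$(iii): the inclusion $\supseteq$ is Lemma~\ref{lem:ker}(iv); for $\subseteq$ take $z = f_X(u) \in ker(\varphi) \cap f_X(F_X)$, note $f(\varphi(u)) = \varphi(f_X(u)) = 0$ so $u \in \varphi^{-1}(ker(f)) = ker(f_X) + ker(\varphi)$, write $u = w + v$, and observe $z = f_X(u) = f_X(v) \in f_X(ker(\varphi))$ since $f_X(w) = 0$. For (iii)$\Rightarrow$(ii): again $\supseteq$ is free from Lemma~\ref{lem:ker}(iii); for $\subseteq$ take $u \in \varphi^{-1}(ker(f))$, note $f_X(u) \in ker(\varphi) \cap f_X(F_X) = f_X(ker(\varphi))$, choose $v \in ker(\varphi)$ with $f_X(v) = f_X(u)$, and conclude $u - v \in ker(f_X)$, hence $u \in ker(f_X) + ker(\varphi)$. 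Combined with (i)$\Leftrightarrow$(ii), this makes all three equivalent.

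The only genuine content is the opening identity $\varphi(ker(f_X)) = [A,A]_f$, since it is what converts the purely set-theoretic preimage statements (ii) and (iii) into the algebraic statement (i); everything after it is routine diagram and element chasing driven by surjectivity of $\varphi$ and the commutation relation. I expect no real obstacle beyond being careful that commutativity of $A$ is invoked where needed, but that is already absorbed into Proposition~\ref{prop:ker1} and into the standing inclusion $[A,A]_f \subseteq ker(f)$.
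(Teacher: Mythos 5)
Your proof is correct and follows essentially the same route as the paper: both arguments hinge on Proposition~\ref{prop:ker1} together with the surjectivity of $\varphi$, the relation $\varphi\circ f_X = f\circ\varphi$, and the easy inclusions from Lemma~\ref{lem:ker}, and your (ii)$\Leftrightarrow$(iii) steps are element-for-element the paper's. The only difference is cosmetic: you extract the identity $\varphi(\ker(f_X)) = [A,A]_f$ as an explicit organizing lemma, whereas the paper performs the same bracket-lifting computation inline in each direction of (i)$\Leftrightarrow$(ii).
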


\begin{proof}
(i) $\Rightarrow$ (ii).
By Lemma~\ref{lem:ker},
we only need to show that
$\varphi^{-1}(ker(f)) \subseteq ker(f_X) + ker(\varphi)$.
 Let $w \in
\varphi^{-1}(ker(f))$, then $\varphi(w) \in ker(f)$. There exist
$a_i, b_i \in A$ such that $\varphi(w) = \Sigma_i[a_i, b_i]_f$.
Since $\varphi$ is surjective, there exist $s_i, t_i \in F_X$ such
that $\varphi(u_i) = a_i$ and $\varphi(v_i) = b_i$. Then
$\varphi(w) = \varphi(\Sigma_i[s_i, t_i]_{f_X})$. Let $u =
\Sigma_i[s_i, t_i]_{f_X}$, $ v = w - u$. We have $w = u + v$, $ u
\in ker(f_X)$, and $v \in ker(\varphi)$.

(ii) $\Rightarrow$ (i). It is clear that $[A,A]_f \subseteq
ker(f)$.  Let $a \in ker(f)$. There exist $w \in
\varphi^{-1}(ker(f))$, such that $a = \varphi(w)$. Also there
exist $u \in ker(f_X)$, $v \in ker(\varphi)$, such that $ w = u +
v$. So we have $a = \varphi(w) = \varphi(u) + \varphi(v) =
\varphi(u)$. Since $ u \in ker(f_X) = [F_X, F_X]_{f_X}$, there
exist $s_i, t_i \in F_X$, such that $u = \Sigma_i[s_i,
t_i]_{f_X}$. Hence $a = \varphi(u) = \Sigma_i[\varphi(s_i),
\varphi(t_i)]_f \in [A, A]_f$.

(ii) $\Rightarrow$ (iii). We only need to show $ker(\varphi)
\cap f_X(F_X) \subseteq  f_X(ker(\varphi))$.

Let $w \in ker(\varphi) \cap f_X(F_X)$. Then $w = f_X(w')$ for
some $w' \in F_X$. We have $f(\varphi(w')) = \varphi(f_X(w')) =
\varphi(w) = 0$. This implies that $w' \in \varphi^{-1}(ker(f))$.
Therefore there exist $ u \in ker(f_X)$ and $v \in ker(\varphi)$,
such that $ w' = u + v$. Hence
  $w = f_X(w') = f_X(u) + f_X(v) = f_X(v) \in f_X(ker(\varphi))$.

(iii) $\Rightarrow$ (ii). We only need to show that
$\varphi^{-1}(ker(f)) \subseteq ker(f_X) + ker(\varphi)$.

Let $w \in \varphi^{-1}(ker(f))$, then  $\varphi(f_X(w)) =
f(\varphi(w))  = 0$. This means $ f_X(w) \in ker(\varphi) \cap
f_X(F_X) = f_X(ker(\varphi))$. Therefore $f_X(w) = f_X(u)$ for
some $ u \in ker(\varphi)$. Let $v = w - u$, then $v \in ker(f)$
and $ w = u + v \in ker(f_X) + ker(\varphi)$. \wbox

\end{proof}

 \addcontentsline{toc}{section}{\numberline {}References}

\end {document}